\documentclass[a4paper]{amsart}
\usepackage{amsthm,amsfonts,amsmath,amssymb}
\usepackage[abs]{overpic}

\newtheorem{theorem}{Theorem}[section]
\newtheorem{lemma}[theorem]{Lemma}
\newtheorem{sublemma}[theorem]{Sublemma}
\newtheorem{proposition}[theorem]{Proposition}
\newtheorem{corollary}[theorem]{Corollary}
\newtheorem{claim}[theorem]{Claim}

\theoremstyle{remark}
\newtheorem{remark}[theorem]{Remark}
\newtheorem*{acknowledgements}{Acknowledgements}

\theoremstyle{definition}
\newtheorem{definition}[theorem]{Definition}
\newtheorem{example}[theorem]{Example}

\newcommand{\omu}{\overline{\mu}}
\newcommand{\e}{\varepsilon}
\newcommand{\de}{\delta}

\makeatletter

\@addtoreset{figure}{section}
\makeatother

\makeatletter
  
  \@addtoreset{equation}{section}
\makeatother

\begin{document}
\title[Combinatorial approach to Milnor invariants of welded links]{Combinatorial approach to Milnor invariants of welded links}
 
\author[Haruko A. Miyazawa]{Haruko A. Miyazawa}
\address{Institute for Mathematics and Computer Science, Tsuda University,
2-1-1 Tsuda-machi, Kodaira, Tokyo 187-8577, Japan}
\curraddr{}
\email{aida@tsuda.ac.jp}
\thanks{}

\author[Kodai Wada]{Kodai Wada}
\address{Department of Mathematics, Graduate School of Science, Osaka University, 1-1 Machikaneyama-cho, Toyonaka, Osaka 560-0043, Japan}
\curraddr{}
\email{ko-wada@cr.math.sci.osaka-u.ac.jp}
\thanks{The second author was supported by JSPS KAKENHI Grant Number JP19J00006.}

\author[Akira Yasuhara]{Akira Yasuhara}
\address{Faculty of Commerce, Waseda University, 1-6-1 Nishi-Waseda, Shinjuku-ku, Tokyo 169-8050, Japan}
\curraddr{}
\email{yasuhara@waseda.jp}
\thanks{The third author was supported by JSPS KAKENHI Grant Number JP17K05264 and Waseda University Grant for Special Research Projects 
(Project number: 2020C-175, 2020R-018).}

\subjclass[2010]{57M25, 57M27}

\keywords{Milnor invariant, welded link, welded string link, self-crossing virtualization}

\begin{abstract}
For a classical link, Milnor defined a family of isotopy invariants, called Milnor $\omu$-invariants. 
Recently, Chrisman extended Milnor $\omu$-invariants to welded links by a topological approach. 
The aim of this paper is to show that Milnor $\omu$-invariants can be extended to welded links by a combinatorial approach. 
The proof contains an alternative proof for the invariance of the original $\omu$-invariants of classical links. 
\end{abstract}

\maketitle

\section{Introduction} 
In~\cite{M54,M57}, Milnor defined a family of isotopy invariants of classical links in the $3$-sphere, called {\em Milnor $\omu$-invariants}. 
Given an $n$-component classical link $L$, the {\em Milnor number $\mu_{L}(I)\in\mathbb{Z}$} of $L$ is specified by a finite sequence $I$ of indices in $\{1,\ldots,n\}$. 
This integer is only well-defined up to a certain indeterminacy~$\Delta_L(I)$, 
i.e. the residue class $\omu_{L}(I)$ of $\mu_{L}(I)$ modulo $\Delta_L(I)$ is an invariant of $L$. 
It is shown in~\cite[Theorem~8]{M57} that $\omu_{L}(I)$ is invariant under link-homotopy when the sequence $I$ has no repeated indices. 
Here, {\em link-homotopy} is an equivalence relation generated by self-crossing changes and isotopies (cf.~\cite{M54}). 
In~\cite{HL}, Habegger and Lin defined Milnor numbers for {\em classical string links} in the $3$-ball, and  
proved that they are integer-valued invariants. 
In this sense, Milnor numbers are suitable for classical string links rather than classical links. 
These numbers for classical string links are called {\em Milnor $\mu$-invariants}. 

The notion of virtual links, introduced by Kauffman in~\cite{Kauffman}, is a diagrammatic generalization of classical links in the $3$-sphere. 
It naturally yields the notion of virtual string links. 
{\em Virtual (string) links} are generalized (string) link diagrams considered up to an extended set of Reidemeister moves. 
In the virtual context, there are two other disallowed moves, known as the {\it forbidden moves}. 
{\em Welded (string) links} arise from virtual (string) links when we allow to use one of the two forbidden moves, called the over-crossings commute move. 
The notion of welded objects was first studied by Fenn, Rim\'{a}nyi and Rourke in \cite{FRR}. 
The aim of this paper is to give an extension of Milnor $\omu$-invariants to welded links in a combinatorial way.

In~\cite{DK}, Dye and Kauffman first tried to extend Milnor link-homotopy $\omu$-invariants to virtual links. 
Kotorii pointed out in \cite[Remark~4.6]{Kotorii} that the extension of Dye and Kauffman is incorrect. 
In fact, there exists a classical link having two different values of the Dye-Kauffman's~$\omu$. 
Hence the Dye-Kauffman's~$\omu$ is not well-defined even for classical links (see Remark~\ref{rem-DK-Delta}).

A successful extension is due to Kravchenko and Polyak in~\cite{KP}. 
Using Gauss diagrams, 
they extended Milnor link-homotopy $\mu$-invariants to virtual tangles, which are slight generalizations of virtual string links. 
In~\cite{Kotorii}, Kotorii gave an extension of Milnor link-homotopy $\omu$-invariants to virtual links via the theory of nanowords introduced by Turaev in~\cite{T}. 
Both extensions are actually invariants of welded objects and combinatorial, but they are restricted to the case of link-homotopy invariants. 

In~\cite{ABMW}, Audoux, Bellingeri, Meilhan and Wagner defined a $4$-dimensional version of Milnor 
$\mu$-invariants. Combining this version of Milnor $\mu$-invariants with the Tube map, they extended Milnor isotopy $\mu$-invariants to welded string links. 
Here, the {\em Tube map} is a map from welded string links to ribbon $2$-dimensional string links in the $4$-ball (cf.~\cite{Y,S}). 
Recently, Chrisman in~\cite{C} defined Milnor $\omu$-invariants for welded links with similar ingredients as in \cite{ABMW}, 
and proved that they are {\em welded concordance} invariants. 
While Milnor invariants for welded objects are given in \cite{ABMW,C}, their approaches are topological. 
The authors believe that it is important to consider a combinatorial approach, 
since the advantage of virtual/welded objects is that they are combinatorial.

In~\cite{M57}, Milnor gave an algorithm to compute $\omu$-invariants for a classical link based on its diagram. 
This algorithm can be applied to virtual link diagrams. 
By the result of Chrisman in~\cite{C}, the values given by the algorithm are invariants of welded links. 
Hence, it is theoretically possible to prove that the values are invariant under welded isotopies, from a diagrammatic point of view. 
In this paper, we actually give such a diagrammatic proof. 
Our approach is purely combinatorial, self contained, and different from~\cite{KP,Kotorii,ABMW,C}.

\begin{acknowledgements}
The authors would like to thank Benjamin Audoux and Jean-Baptiste Meilhan for helpful comments on an early version of this paper. 
We are also grateful to Micah Chrisman for informing us of his paper~\cite{C}. 
\end{acknowledgements}

\section{Preliminaries}\label{sec-prelim}
For an integer $n\geq1$, an {\em $n$-component virtual link diagram} is the image of an immersion of $n$ ordered and oriented circles into the plane, whose singularities are only transverse double points. 
Such double points are divided into {\em classical crossings} and {\em virtual crossings} as shown in Figure~\ref{xing}. 

\begin{figure}[htb]
  \begin{center}
    \begin{overpic}[width=4.5cm]{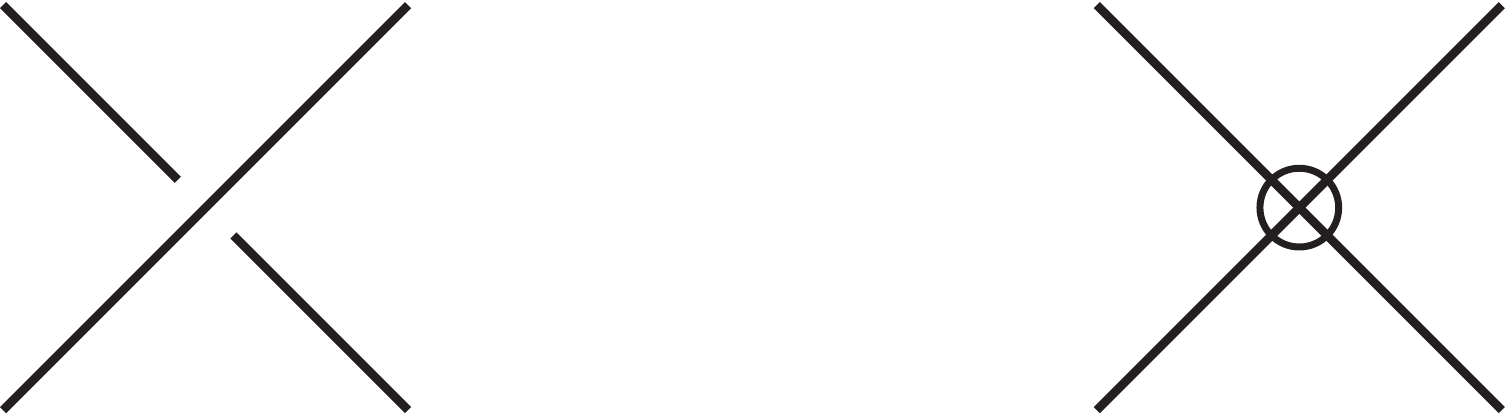}
      \put(-19,-15){classical crossing}
      \put(78,-15){virtual crossing}
    \end{overpic}
  \end{center}
  \vspace{1em}
  \caption{Two types of double points}
  \label{xing}
\end{figure}

{\em Welded Reidemeister moves} consist of Reidemeister moves R1--R3, virtual moves V1--V4 and the over-crossings commute move OC as shown in Figure~\ref{WRmoves}. 
A {\em welded isotopy} is a finite sequence of welded Reidemeister moves, and 
an {\em $n$-component welded link} is an equivalence class of $n$-component virtual link diagrams under welded isotopy. 
We emphasize that all virtual link diagrams and welded links are ordered and oriented. 

\begin{figure}[htb]
  \begin{center}
    \begin{overpic}[width=12cm]{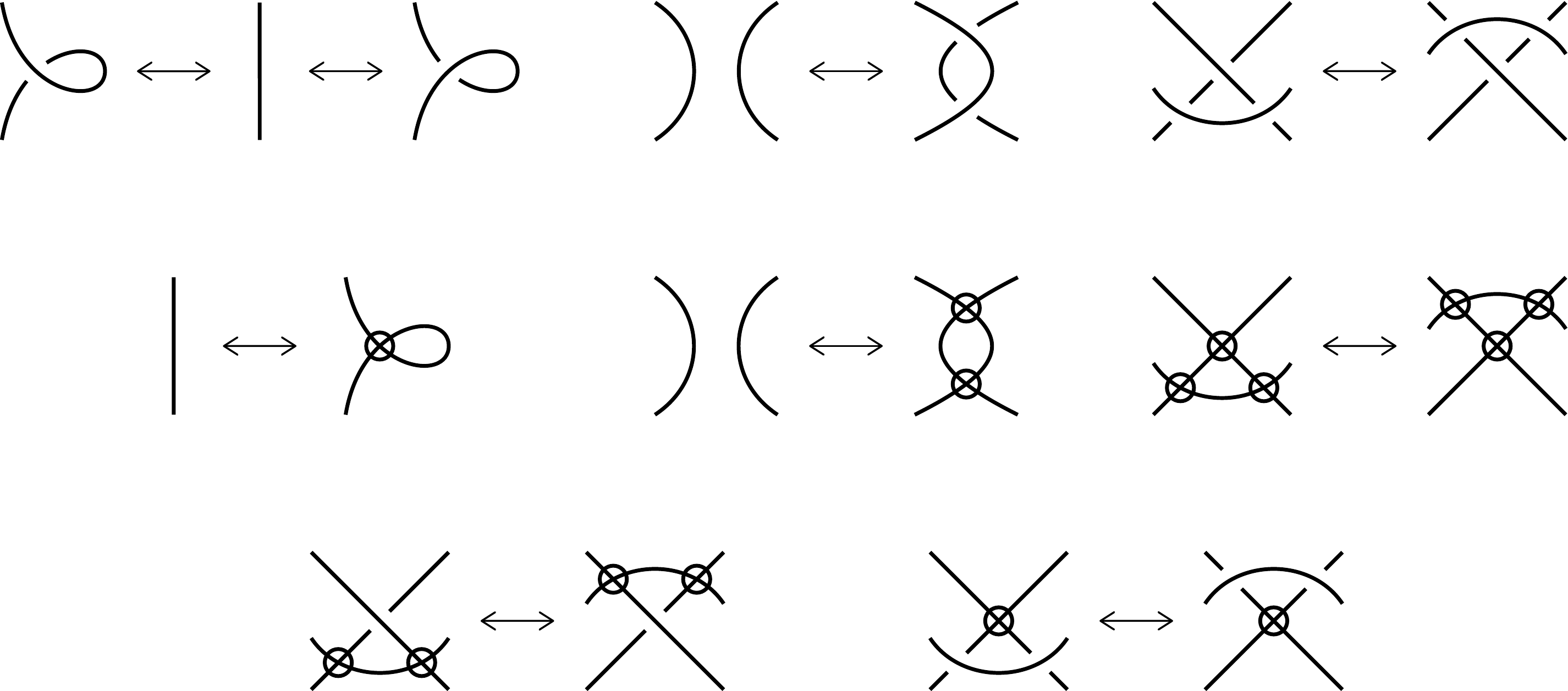}
      \put(32,141){R1}
      \put(69,141){R1}
      \put(178,141){R2}
      \put(290,141){R3}
      \put(50,80.5){V1}
      \put(178,80.5){V2} 
      \put(290,80.5){V3}
      \put(106,21){V4}
      \put(240,21){OC}
    \end{overpic}
  \end{center}
  \caption{Welded Reidemeister moves}
  \label{WRmoves}
\end{figure}

Let $D$ be an $n$-component virtual link diagram. 
Put a {\em base point} $p_{i}$ on some arc of each $i$th component, which is disjoint from all crossings of $D$ $(1\leq i\leq n)$. 
A {\em base point system} of $D$ is an ordered $n$-tuple $\mathbf{p}=(p_{1},\ldots,p_{n})$ of base points on $D$. 
We denote by $(D,\mathbf{p})$ a virtual link diagram $D$ with a base point system $\mathbf{p}$. 
The classical under-crossings of $D$ and base points $p_{1},\ldots,p_{n}$ divide $D$ into a finite number of segments possibly with classical over-crossings and virtual crossings. 
We call such a segment an {\em arc} of $(D,\mathbf{p})$. 

As shown in Figure~\ref{schematic}, let $a_{i1}$ be the outgoing arc from the base point $p_{i}$, and let 
$a_{i2},\ldots,a_{im_{i}+1}$ be the other arcs of the $i$th component in turn with respect to the orientation, where 
$m_i+1$ is the number of arcs of the $i$th component $(1\leq i\leq n)$. 
In the figure, $u_{ij}\in\{a_{kl}\}$ denotes the arc which separates $a_{ij}$ and $a_{ij+1}$. 
Let $\e_{ij}\in\{\pm1\}$ be the sign of the crossing among $a_{ij},u_{ij}$ and $a_{ij+1}$, and we put 
\[
v_{ij}=u_{i1}^{\e_{i1}}u_{i2}^{\e_{i2}}\cdots u_{ij}^{\e_{ij}} 
\]
for $1\leq j\leq m_{i}$. 
We call the word $v_{ij}$ {\em a partial longitude} of $(D,\mathbf{p})$. 

\begin{figure}[htb]
  \begin{center}
    \begin{overpic}[width=11cm]{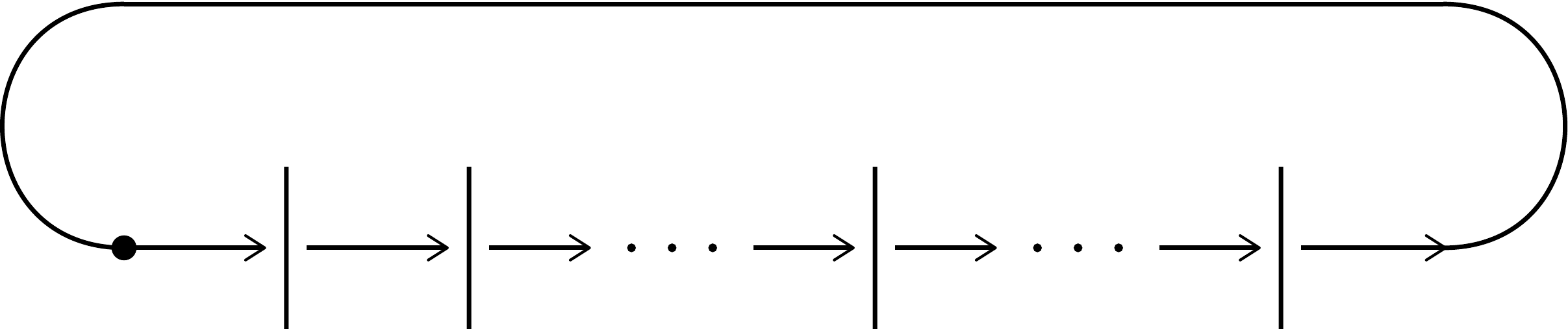}
      \put(21,4){$p_{i}$}
      \put(33,24){$a_{i1}$}
      \put(68,24){$a_{i2}$}
      \put(100,24){$a_{i3}$}
      \put(154,24){$a_{ij}$}
      \put(180,24){$a_{ij+1}$}
      \put(232,24){$a_{im_{i}}$}
      \put(262,24){$a_{im_{i}+1}$}
      \put(52,-12){$u_{i1}$}
      \put(88,-12){$u_{i2}$}
      \put(169,-12){$u_{ij}$}
      \put(250,-12){$u_{im_{i}}$}
    \end{overpic}
  \end{center}
  \vspace{1em}
  \caption{A schematic illustration of the $i$th component}
  \label{schematic}
\end{figure}

Let $A=\langle\alpha_{1},\ldots,\alpha_{n}\rangle$ be the free group of rank~$n$, 
and let $\overline{A}$ be the free group on the set $\{a_{ij}\}$ of arcs. 
The arcs $a_{ij}$ will be also called {\em letters} 
when they are regarded as elements in $\overline{A}$. 
For an integer $q\geq1$, a sequence of homomorphisms 
\[
\eta_{q}=\eta_{q}(D,\mathbf{p}):\overline{A}\longrightarrow A\] 
associated with $(D,\mathbf{p})$ is defined inductively by 
\begin{eqnarray*}
\eta_{1}(a_{ij})=\alpha_{i}, & \\
\eta_{q+1}(a_{i1})=\alpha_{i} & \mbox{and} \hspace{1em} \eta_{q+1}(a_{ij})=\eta_{q}(v_{ij-1}^{-1})\alpha_{i}\eta_{q}(v_{ij-1}) \hspace{1em} (2\leq j\leq m_i+1). 
\end{eqnarray*} 
Note that our definition of $\eta_q$ is very similar to the original one in~\cite{M57}, 
but they are not the same
because, in \cite{M57}, $a_{i1}\cup a_{im_i+1}$ is a single arc.  
In Section~\ref{sec-w-bar}, we investigate virtual link diagrams with base point systems up to local moves 
relative base point system. The difference of the definition of arcs is essential 
for Theorem~\ref{th-w-bar}, 
see Remark~\ref{rem-Th3}. 

Let $\mathbb{Z}\langle\langle X_{1},\ldots,X_{n}\rangle\rangle$ be the ring of formal power series in non-commutative variables $X_{1},\ldots,X_{n}$ with integer coefficients. 
The {\em Magnus expansion} is a homomorphism 
\[
E:A\longrightarrow \mathbb{Z}\langle\langle X_{1},\ldots,X_{n}\rangle\rangle
\] 
defined, for $1\leq i\leq n$, by 
\[
E(\alpha_{i})=1+X_{i}\hspace{1em}  \mbox{and}\hspace{1em} E(\alpha_{i}^{-1})=1-X_{i}+X_{i}^{2}-X_{i}^{3}+\cdots.
\]

For each $1\leq i\leq n$, let $w_{i}$ be the sum of the signs of all classical self-crossings of the $i$th component. 
We call the word $l_{i}=a_{i1}^{-w_{i}}v_{im_{i}}$ the {\em $i$th preferred longitude} of $(D,\mathbf{p})$. 

\begin{definition}
For a sequence $j_{1}\ldots j_{s}i$ $(1\leq s<q)$ of indices in $\{1,\ldots,n\}$, the {\em Milnor number} $\mu_{(D,\mathbf{p})}^{(q)}(j_{1}\ldots j_{s}i)$ of $(D,\mathbf{p})$ is the coefficient of $X_{j_{1}}\cdots X_{j_{s}}$ in the Magnus expansion $E$ of $\eta_{q}(l_{i})$. 
\end{definition}

The remainder of this section gives several lemmas, which will be used in Section~\ref{sec-w-bar} and subsequent sections. 

Let $q\geq1$ be an integer. 
For a group $G$, we denote by $G_{q}$ the $q$th term of the lower central series of $G$, 
i.e. $G_{1}=G$ and $G_{q+1}=[G,G_{q}]$ is the normal subgroup generated by all $[g,h]=ghg^{-1}h^{-1}$ with $g\in G$ and $h\in G_{q}$. 
For two normal subgroups $N$ and $M$ of $G$, we denote by $NM$ the normal subgroup of $G$ generated by all $nm$ with $n\in N$ and $m\in M$.

The following lemma, which will be used very often in this paper, is easily shown. 

\begin{lemma}[{\cite[page 290]{M57}}]\label{lem-p290}
Let $G$ be a group. 
For any $g,x,y\in G$, the following hold. 
\begin{enumerate}
\item If $x\equiv y\pmod{G_{q}}$, then $x^{-1}gx\equiv y^{-1}gy\pmod{G_{q+1}}$. 

\item Let $N$ be a normal subgroup of $G$. 
If $x\equiv y\pmod{G_{q}N}$, then $x^{-1}gx\equiv y^{-1}gy\pmod{G_{q+1}N}$.
\end{enumerate}
\end{lemma}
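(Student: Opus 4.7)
The plan is to reduce everything to a single commutator identity. For part~(1), I would write the hypothesis $x \equiv y \pmod{G_q}$ as $x = yz$ for some $z \in G_q$, and then substitute:
\[
x^{-1}gx \cdot (y^{-1}gy)^{-1} = z^{-1}(y^{-1}gy)z \cdot (y^{-1}gy)^{-1} = [z^{-1},\, y^{-1}gy].
\]
Since $z^{-1} \in G_q$ and $y^{-1}gy \in G = G_1$, this commutator lies in $[G_q, G_1] = G_{q+1}$, which is exactly what is needed.

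For part~(2), rather than repeating a harder version of the same computation inside $G$, I would pass to the quotient $G/N$. The key ingredient is the standard identity $(G/N)_q = G_q N / N$, which follows by an easy induction on $q$ using the fact that the natural projection commutes with the commutator bracket. Under this identification, the hypothesis $x \equiv y \pmod{G_q N}$ becomes $\bar x \equiv \bar y \pmod{(G/N)_q}$ in $G/N$, so part~(1) applied to $G/N$ yields $\bar x^{-1} \bar g \bar x \equiv \bar y^{-1} \bar g \bar y \pmod{(G/N)_{q+1}}$. Pulling back gives the desired congruence modulo $G_{q+1} N$.

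There is no real obstacle here; the whole argument rests on the commutator identity in part~(1) and on the well-known behavior of the lower central series under quotients. The only thing to be careful about is making sure the identification $(G/N)_q = G_q N / N$ is stated explicitly so that the reduction in~(2) is justified, and noting that normality of $N$ is used throughout (both to form the quotient and to guarantee that $G_q N$ is a subgroup).
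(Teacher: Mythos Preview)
Your proof is correct. The paper does not supply its own proof of this lemma; it merely remarks that it ``is easily shown'' and cites Milnor. Your argument for~(1) via the identity $x^{-1}gx\cdot(y^{-1}gy)^{-1}=[z^{-1},y^{-1}gy]\in[G_q,G]=G_{q+1}$ is the standard one, and your reduction of~(2) to~(1) by passing to $G/N$ and using $(G/N)_q=G_qN/N$ is valid and efficient.
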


For the $q$th term $A_{q}$ of the lower central series of the free group $A=\langle\alpha_{1},\ldots,\alpha_{n}\rangle$, we have the following. 

\begin{lemma}[{\cite[pages 290 and 291]{M57}}]
\label{lem-6k}
The following hold. 
\begin{enumerate}
\item For any $a_{ij}\in\overline{A}$, $\eta_{q}(a_{ij})\equiv\eta_{q+1}(a_{ij}) \pmod{A_{q}}$. 

\item For any $r_{ij}=a^{-1}_{ij+1}u^{-\varepsilon_{ij}}_{ij}a_{ij}u_{ij}^{\varepsilon_{ij}}$ $(1\leq j\leq m_{i})$, $\eta_{q}(r_{ij})\equiv 1\pmod{A_{q}}$. 
\end{enumerate}
\end{lemma}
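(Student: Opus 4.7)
The plan is to prove (1) by induction on $q$ and then deduce (2) directly from (1) together with Lemma~\ref{lem-p290}(1); no separate induction is needed for (2).

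For (1), the base case $q=1$ is vacuous because $A_1=A$. For the inductive step, assume $\eta_{q-1}(a_{ij})\equiv\eta_q(a_{ij})\pmod{A_{q-1}}$ for every arc $a_{ij}$. When $j=1$ both sides equal $\alpha_i$, so only $j\ge 2$ requires work. From the defining recursion,
\[
\eta_q(a_{ij})=\eta_{q-1}(v_{ij-1})^{-1}\alpha_i\,\eta_{q-1}(v_{ij-1}),\qquad
\eta_{q+1}(a_{ij})=\eta_q(v_{ij-1})^{-1}\alpha_i\,\eta_q(v_{ij-1}).
\]
Since $v_{ij-1}$ is a product of letters $a_{kl}^{\pm 1}$, applying the inductive hypothesis factor by factor yields $\eta_{q-1}(v_{ij-1})\equiv\eta_q(v_{ij-1})\pmod{A_{q-1}}$. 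Lemma~\ref{lem-p290}(1) with $g=\alpha_i$ then upgrades this to $\eta_q(a_{ij})\equiv\eta_{q+1}(a_{ij})\pmod{A_q}$.

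For (2), the case $q=1$ is again trivial. For $q\ge 2$, using $v_{ij}=v_{ij-1}u_{ij}^{\e_{ij}}$, a direct expansion of the defining formula yields the uniform conjugation identity
\[
\eta_q(a_{ij+1})=x^{-1}\,\eta_q(a_{ij})\,x\quad\text{with}\quad x=\eta_{q-1}(u_{ij})^{\e_{ij}},
\]
valid both for $j\ge 2$ and for $j=1$ (where $\eta_q(a_{i1})=\alpha_i$ makes the identity immediate). Setting $y=\eta_q(u_{ij})^{\e_{ij}}$, part (1) applied to the letter $u_{ij}$ gives $x\equiv y\pmod{A_{q-1}}$. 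Substituting the conjugation identity into the definition of $r_{ij}$,
\[
\eta_q(r_{ij})=x^{-1}\,\eta_q(a_{ij})^{-1}\,x\cdot y^{-1}\,\eta_q(a_{ij})\,y,
\]
and Lemma~\ref{lem-p290}(1) with $g=\eta_q(a_{ij})^{-1}$ produces $x^{-1}\eta_q(a_{ij})^{-1}x\equiv y^{-1}\eta_q(a_{ij})^{-1}y\pmod{A_q}$, whence the right-hand side collapses to $1$ modulo $A_q$.

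The main obstacle is not conceptual but bookkeeping: one must verify that the conjugation identity expressing $\eta_q(a_{ij+1})$ in terms of $\eta_q(a_{ij})$ holds uniformly in $j$ (including the boundary case $j=1$), and one must carefully track the exponent of the lower central series so that each invocation of Lemma~\ref{lem-p290}(1) delivers precisely the drop from $A_{q-1}$ to $A_q$ that both statements demand.
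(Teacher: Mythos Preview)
Your proof is correct. Part~(1) is essentially identical to the paper's argument (same induction, same application of Lemma~\ref{lem-p290}(1), merely re-indexed).

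For part~(2) you take a genuinely different route from the paper. You prove and use the one-step conjugation identity $\eta_q(a_{ij+1})=\eta_{q-1}(u_{ij})^{-\e_{ij}}\eta_q(a_{ij})\eta_{q-1}(u_{ij})^{\e_{ij}}$ and then compare the conjugator at level $q-1$ with the one at level~$q$ via part~(1). The paper instead introduces auxiliary elements $s_{ij}=a_{ij+1}^{-1}v_{ij}^{-1}a_{i1}v_{ij}$, proves $\eta_q(s_{ij})\equiv 1\pmod{A_q}$ for all~$j$ (relating each arc back to the initial arc $a_{i1}$ through the full partial longitude), and then expresses $r_{ij}=s_{ij}\,u_{ij}^{-\e_{ij}}s_{ij-1}^{-1}u_{ij}^{\e_{ij}}$. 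Your approach is more direct and avoids the case split $j=1$ versus $j\ge 2$ in the final step; the paper's approach yields the slightly stronger intermediate fact $\eta_q(s_{ij})\equiv 1$, but this is not used elsewhere in the paper, so nothing is lost by your shortcut.
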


Although this is essentially shown in \cite{M57}, we give the proof for the readers' convenience.

\begin{proof}[Proof of Lemma~\ref{lem-6k}] 
(1)~
This is proved by induction on $q$. 
Since $A_{1}=A$, the assertion holds for $q=1$. 
Assume that $q\geq1$. 
For $j=1$, we have $\eta_{q+1}(a_{i1})=\alpha_{i}=\eta_{q+2}(a_{i1})$ by definition. 
For $j\geq2$, 
by the induction hypothesis and Lemma~\ref{lem-p290}(1), we have \begin{eqnarray*}
\eta_{q+1}(a_{ij})
&=&\eta_{q}(v_{ij-1}^{-1})\alpha_{i}\eta_{q}(v_{ij-1}) \\
&\equiv& \eta_{q+1}(v_{ij-1}^{-1})\alpha_{i}\eta_{q+1}(v_{ij-1}) \pmod{A_{q+1}} \\
&=&\eta_{q+2}(a_{ij}). 
\end{eqnarray*}  

(2)~
Put $s_{ij}=a_{ij+1}^{-1}v_{ij}^{-1}a_{i1}v_{ij}$ $(1\leq j\leq m_{i})$. 
Then by (1) and Lemma~\ref{lem-p290}(1), it follows that 
\begin{eqnarray*}
\eta_{q}(s_{ij})&=&\eta_{q}(a_{ij+1}^{-1})\eta_{q}(v_{ij}^{-1})\alpha_{i}\eta_{q}(v_{ij}) \\
&=& \left(\eta_{q-1}(v_{ij}^{-1})\alpha_{i}^{-1}\eta_{q-1}(v_{ij})\right)\eta_{q}(v_{ij}^{-1})\alpha_{i}\eta_{q}(v_{ij}) \\
&\equiv& \eta_{q}(v_{ij}^{-1})\alpha_{i}^{-1}\eta_{q}(v_{ij})\eta_{q}(v_{ij}^{-1})\alpha_{i}\eta_{q}(v_{ij}) \pmod{A_{q}} \\
&=&1.  
\end{eqnarray*} 

For $j=1$, since $r_{i1}=s_{i1}$, we have $\eta_{q}(r_{i1})\equiv1\pmod{A_{q}}$. 
For $j\geq2$, we have $r_{ij}=s_{ij}u_{ij}^{-\varepsilon_{ij}}s_{ij-1}^{-1}u_{ij}^{\varepsilon_{ij}}$. 
This implies that 
\begin{eqnarray*}
\eta_{q}(r_{ij})&=&\eta_{q}(s_{ij})\eta_{q}(u_{ij}^{-\varepsilon_{ij}})\eta_{q}(s_{ij-1}^{-1})\eta_{q}(u_{ij}^{\varepsilon_{ij}}) \\
&\equiv& \eta_{q}(u_{ij}^{-\varepsilon_{ij}})\eta_{q}(u_{ij}^{\varepsilon_{ij}}) \pmod{A_{q}}\\
&=&1. 
\end{eqnarray*}
\end{proof}

It is shown in~\cite{MKS} that the Magnus expansion $E$ is injective and satisfies the following. 

\begin{lemma}[{\cite[Corollary 5.7]{MKS}}]
\label{lem-Magnus}
For any $x\in A_{q}$, 
\[
E(x)=1+(\text{terms of degree~$\geq q$}). 
\]
\end{lemma}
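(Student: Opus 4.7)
The plan is to prove the statement by induction on $q$. For brevity, write $I^d\subset\mathbb{Z}\langle\langle X_1,\ldots,X_n\rangle\rangle$ for the two-sided ideal of formal power series whose terms all have degree $\geq d$; the assertion then reads $E(A_q)\subset 1+I^q$.

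For the base case $q=1$, since $E(\alpha_i)=1+X_i$ and $E(\alpha_i^{-1})=1-X_i+X_i^2-\cdots$ both lie in $1+I^1$, and since $1+I^1$ is a subgroup of the units of $\mathbb{Z}\langle\langle X_1,\ldots,X_n\rangle\rangle$, the multiplicativity of $E$ gives $E(x)\in 1+I^1$ for every $x\in A=A_1$.

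For the inductive step, assume $E(A_q)\subset 1+I^q$ and set $S=\{x\in A\mid E(x)\in 1+I^{q+1}\}$. Because $1+I^{q+1}$ is a subgroup of the units and $I^{q+1}$ is a two-sided ideal, $S$ is a normal subgroup of $A$; hence to conclude $A_{q+1}=[A,A_q]\subset S$ it suffices to verify $[g,h]\in S$ for every $g\in A$ and $h\in A_q$. Writing $E(g)=1+a$ with $a\in I^1$ and $E(h)=1+b$ with $b\in I^q$, a direct multiplication gives
\[
E(g)E(h)-E(h)E(g)=ab-ba\in I^{q+1}.
\]
Setting $c=E(g)E(h)-E(h)E(g)\in I^{q+1}$ and noting that $E(h)E(g)$ is a unit, we obtain
\[
E([g,h])=E(g)E(h)\bigl(E(h)E(g)\bigr)^{-1}=1+c\cdot\bigl(E(h)E(g)\bigr)^{-1}\in 1+I^{q+1},
\]
since $I^{q+1}$ is closed under multiplication by arbitrary power series. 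This shows $[g,h]\in S$ and completes the induction.

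I do not anticipate any serious obstacle: the only substantive observation is the commutator estimate $E(g)E(h)-E(h)E(g)\in I^{s+t}$ whenever $E(g)-1\in I^s$ and $E(h)-1\in I^t$, which is a one-line expansion; everything else is formal bookkeeping with ideals and the fact that $1+I^{q+1}$ forms a normal subgroup pulled back through $E$.
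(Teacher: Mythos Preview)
Your argument is correct. The paper does not supply its own proof of this lemma; it simply cites it as Corollary~5.7 of Magnus--Karrass--Solitar, so there is nothing to compare your approach against within the paper itself. What you have written is essentially the standard proof: filter the power series ring by the degree ideals $I^d$, observe that $1+I^d$ is a group under multiplication that pulls back to a normal subgroup of $A$, and then use the commutator estimate $(1+a)(1+b)-(1+b)(1+a)=ab-ba\in I^{s+t}$ to push the induction from $q$ to $q+1$. Each step you listed goes through, including the normality of $S$ (because $I^{q+1}$ is two-sided) and the inversion $E([g,h])=1+c\,(E(h)E(g))^{-1}$ with $c\in I^{q+1}$.
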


By Lemmas~\ref{lem-6k}(1) and \ref{lem-Magnus}, we have the following lemma.

\begin{lemma}\label{lem-length}
If $1\leq s<q$, then $\mu_{(D,\mathbf{p})}^{(q)}(j_{1}\ldots j_{s}i)=\mu_{(D,\mathbf{p})}^{(q+1)}(j_{1}\ldots j_{s}i)$. 
\end{lemma}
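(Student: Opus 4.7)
The plan is to argue that $\eta_q(l_i)$ and $\eta_{q+1}(l_i)$ already agree modulo $A_q$, and then use the Magnus expansion to see that this forces their coefficients in all degrees $<q$ to coincide.

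First I would observe that the preferred longitude $l_i = a_{i1}^{-w_i}v_{im_i}$ is, by construction, a word in the letters $\{a_{ij}^{\pm 1}\}$ of $\overline{A}$. By Lemma~\ref{lem-6k}(1), for each letter $a_{ij}$ we have $\eta_q(a_{ij})\equiv \eta_{q+1}(a_{ij})\pmod{A_q}$. Since $\eta_q$ and $\eta_{q+1}$ are both homomorphisms and $A_q$ is a normal subgroup, this congruence is preserved under taking inverses and products, so
\[
\eta_q(l_i)\equiv \eta_{q+1}(l_i)\pmod{A_q}.
\]
Equivalently, $\eta_q(l_i)\cdot\eta_{q+1}(l_i)^{-1}\in A_q$.

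Next I would apply Lemma~\ref{lem-Magnus} to the element $z=\eta_{q+1}(l_i)^{-1}\eta_q(l_i)\in A_q$, giving
\[
E(z)=1+(\text{terms of degree}\geq q).
\]
Since $E$ is a homomorphism, this yields
\[
E(\eta_q(l_i))=E(\eta_{q+1}(l_i))\cdot E(z)=E(\eta_{q+1}(l_i))+(\text{terms of degree}\geq q),
\]
because the second factor differs from $1$ only by terms of degree $\geq q$, and multiplying a power series by such a tail produces only terms of degree $\geq q$.

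Finally, the hypothesis $s<q$ means that the monomial $X_{j_1}\cdots X_{j_s}$ has degree $s<q$, so its coefficient in $E(\eta_q(l_i))$ and $E(\eta_{q+1}(l_i))$ must be the same. By definition of the Milnor number this gives $\mu^{(q)}_{(D,\mathbf{p})}(j_1\ldots j_s i)=\mu^{(q+1)}_{(D,\mathbf{p})}(j_1\ldots j_s i)$. There is no real obstacle here; the only point worth checking carefully is that the congruence $\eta_q(a_{ij})\equiv\eta_{q+1}(a_{ij})\pmod{A_q}$ supplied by Lemma~\ref{lem-6k}(1) indeed extends to arbitrary words in the $a_{ij}^{\pm 1}$, which is immediate from normality of $A_q$ and the homomorphism property.
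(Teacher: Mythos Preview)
Your proof is correct and follows exactly the approach indicated in the paper, which simply cites Lemmas~\ref{lem-6k}(1) and~\ref{lem-Magnus}; you have merely spelled out the routine details of passing from the letterwise congruence to the word $l_i$ and then reading off the low-degree coefficients via the Magnus expansion.
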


Taking the integer $q$ sufficiently large, by this lemma 
we may ignore $q$ and denote $\mu_{(D,\mathbf{p})}^{(q)}(j_{1}\ldots j_{s}i)$ by $\mu_{(D,\mathbf{p})}(j_{1}\ldots j_{s}i)$. 
In the rest of this paper, $q$ is assumed to be a sufficiently large integer.

\section{Milnor numbers and welded isotopy relative base point system}
\label{sec-w-bar}
A {\em local move relative base point system} is a local move on a virtual link diagram with a base point system such that it keeps the positions of base points. 
A {\em $\overline{\mbox{w}}$-isotopy} is a finite sequence of 
welded Reidemeister moves relative base point system and a local move as shown in Figure~\ref{w-bar}. 
We emphasize that in a $\overline{\mbox{w}}$-isotopy, we do {\em not} allow to use two local moves as shown in Figure~\ref{bpt-change}. 
We call the two local moves {\em base-change moves}.

\begin{figure}[htb]
  \begin{center}
    \begin{overpic}[width=6cm]{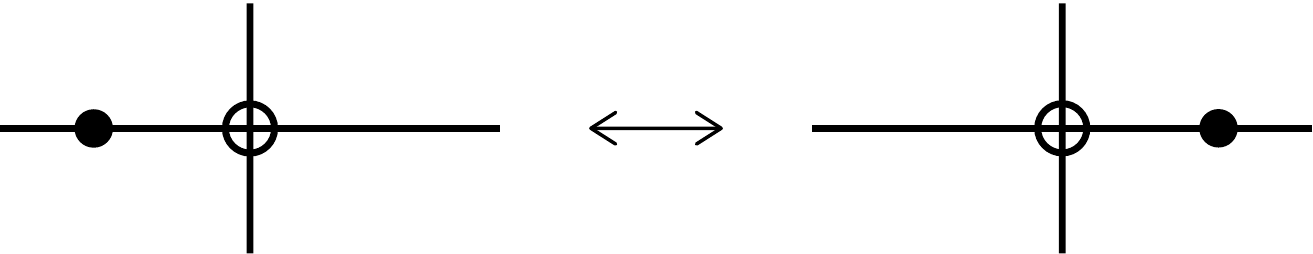}
    \end{overpic}
  \end{center}
  \caption{A base point passing through a virtual crossing}
  \label{w-bar}
\end{figure}

\begin{figure}[htb]
  \begin{center}
    \begin{overpic}[width=12cm]{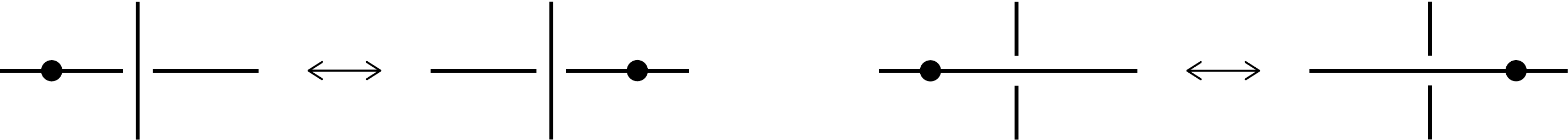}
    \end{overpic}
  \end{center}
  \caption{Base-change moves}
  \label{bpt-change}
\end{figure}

The following theorem gives the invariance of 
Milnor numbers under $\overline{\mbox{w}}$-isotopy.

\begin{theorem}\label{th-w-bar}
Let $(D,\mathbf{p})$ and $(D',\mathbf{p}')$ be virtual link diagrams with base point systems. 
If $(D,\mathbf{p})$ and $(D',\mathbf{p}')$ are $\overline{\mbox{w}}$-isotopic, 
then $\mu_{(D,\mathbf{p})}(I)=\mu_{(D',\mathbf{p}')}(I)$ for any sequence~$I$. 
\end{theorem}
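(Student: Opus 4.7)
The plan is to verify invariance under each elementary move generating $\overline{\mbox{w}}$-isotopy: the welded Reidemeister moves R1--R3, V1--V4, OC performed relative to $\mathbf{p}$, together with the move of Figure~\ref{w-bar} that slides a base point through a virtual crossing. For each such move transforming $(D,\mathbf{p})$ into $(D',\mathbf{p}')$, both diagrams induce homomorphisms into the same free group $A=\langle\alpha_{1},\ldots,\alpha_{n}\rangle$. Lemma~\ref{lem-length} lets us take $q$ as large as we wish, and then Lemma~\ref{lem-Magnus} shows that any element of $A_{q}$ has trivial Magnus coefficient in degree $<q$. Hence it suffices to set up a natural correspondence between the arcs of $D$ and those of $D'$ and prove
\[
\eta_{q}(l_{i}) \equiv \eta_{q}(l'_{i}) \pmod{A_{q}}
\]
for every~$i$, where $l_{i},l'_{i}$ are the corresponding preferred longitudes.

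The easy cases are those that do not affect the arc structure. Since arcs are separated only by classical under-crossings and base points, the base-point-passing move of Figure~\ref{w-bar} and the virtual moves V1, V2, V3 fix the collection of arcs and their partial longitudes, so $l_{i}=l'_{i}$ already in $\overline{A}$. The moves V4 and OC do not create or destroy classical under-crossings either, so the set of arcs is essentially unchanged; the word $v_{ij}$ is either identical or obtained by swapping two adjacent letters whose $\eta_{q}$-conjugation actions commute modulo $A_{q}$, which follows from Lemma~\ref{lem-p290}(2) applied to the inductive hypothesis on $\eta_{q-1}$. For R1, a new self-crossing of sign $\e$ on the $i$th component produces a single new arc contributing a factor $\alpha_{i}^{\e}$ to $v_{im_{i}}$, and this is cancelled by the simultaneous change of the writhe $w_{i}$ in the prefix $a_{i1}^{-w_{i}}$ of $l_{i}$. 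For R2, the two new short under-arcs that appear between the two crossings have $\eta_{q}$-images differing by a relator of the form $r_{ij}$, which vanishes modulo $A_{q}$ by Lemma~\ref{lem-6k}(2).

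The main obstacle is R3. There, the three arcs meeting at the triple point are reshuffled so that the ``bottom'' arc is conjugated by the ``middle'' and ``top'' over-arcs in opposite orders on the two sides of the move. To show that the two orderings agree modulo $A_{q}$ one has to iterate Lemma~\ref{lem-p290}, pass between $\eta_{q}$ and $\eta_{q+1}$ by means of Lemma~\ref{lem-6k}(1) where convenient, and invoke the relations $\eta_{q}(r_{ij})\equiv 1\pmod{A_{q}}$ for each of the three crossings in the configuration. The various sign and orientation sub-cases of R3 should then be handled uniformly by the resulting Wirtinger-style manipulation. Once all elementary moves are treated, concatenation along a sequence of $\overline{\mbox{w}}$-isotopy moves gives the theorem.
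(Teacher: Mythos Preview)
Your overall plan---reduce to showing $\eta_{q}(l_{i})\equiv\eta'_{q}(l'_{i})\pmod{A_{q}}$ for each generating move and then apply Lemma~\ref{lem-Magnus}---is exactly the paper's strategy, and your handling of the ``easy'' moves is essentially right (in fact the paper shows V1--V4, OC, and the move of Figure~\ref{w-bar} leave the arc labelling and the partial longitudes literally unchanged, so no letter-swapping argument is needed for OC or V4).

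The gap is in your treatment of R1--R3. You implicitly assume that the homomorphisms $\eta_{q}$ and $\eta'_{q}$ agree on corresponding arcs, but this is not automatic: $\eta_{q}$ is defined inductively via the partial longitudes $v_{ij}$, and inserting or relabelling an under-crossing on one component shifts \emph{all} downstream $v_{ij}$, hence potentially changes $\eta_{q}$ of every later arc---including arcs that serve as over-arcs at crossings on \emph{other} components. The paper addresses this with a separate induction on $q$ (Claims~\ref{claim-R1}, \ref{claim-R2}, \ref{claim-R3}) proving $\eta_{q}(a_{ij})\equiv\eta'_{q}(a'_{ij})\pmod{A_{q}}$ for all $i,j$, together with an auxiliary congruence identifying the new arc with an old one (e.g.\ $\eta'_{q}(b)\equiv\eta'_{q}(a'_{1h})$ for R1). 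Your R1 sketch in particular is too naive: the new letter contributed to $v_{1m_{1}}$ is not $\alpha_{i}^{\e}$ but a conjugate $\eta_{q}(x)^{-1}\alpha_{i}^{\e}\eta_{q}(x)$, and cancelling it against the writhe correction $\alpha_{i}^{-\e}$ sitting at the \emph{front} of $l_{i}$ requires the rewriting $\eta_{q}(x^{-1})\alpha_{i}\eta_{q}(x)=\eta_{q+1}(a_{1h})\equiv\eta_{q}(a_{1h})\pmod{A_{q}}$ via Lemma~\ref{lem-6k}(1), which is precisely what the paper's Proposition~\ref{prop-R1} does. Without that inductive comparison of $\eta_{q}$-values, none of your R1--R3 arguments go through.
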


Let $l_{i}$ and $l'_{i}$ be the $i$th preferred longitudes of $(D,\mathbf{p})$ and $(D',\mathbf{p}')$, respectively $(1\leq i\leq n)$.
To show Theorem~\ref{th-w-bar}, we observe the difference between 
$\eta_{q}(D,\mathbf{p})(l_{i})$ and $\eta_{q}(D',\mathbf{p}')(l'_{i})$ under $\overline{\mbox{w}}$-isotopy. 

\begin{proposition}\label{prop-R1}
If $(D,\mathbf{p})$ and $(D',\mathbf{p}')$ are related by a single R1 move relative base point system, then $\eta_{q}(D,\mathbf{p})(l_{i})\equiv\eta_{q}(D',\mathbf{p}')(l'_{i})\pmod{A_{q}}$. 
\end{proposition}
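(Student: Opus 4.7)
Without loss of generality, the R1 move is performed on the $i$th component of $D$ (when it is on another component, $l'_i$ and $l_i$ agree up to the obvious arc substitution, and only the inductive part below is needed). The move splits some arc $a_{ij}$ of $D$ into two arcs $a'_{ij}$ and $a'_{i,j+1}$ of $D'$, and introduces one new self-crossing of sign $\varepsilon\in\{\pm1\}$ whose over-arc $\tilde u$ is either $a'_{ij}$ or $a'_{i,j+1}$; every other arc of $D'$ corresponds naturally to an arc of $D$, with an index shift after the kink on the $i$th component. In particular $m'_i=m_i+1$, $w'_i=w_i+\varepsilon$, and $v'_{i,m'_i}$ is obtained from $v_{i,m_i}$ by inserting the extra factor $\tilde u^\varepsilon$. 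Let $\pi:\overline{A'}\to\overline{A}$ denote the collapse homomorphism sending each arc of $D'$ to its natural counterpart in $D$, with both $a'_{ij}$ and $a'_{i,j+1}$ mapped to $a_{ij}$. The plan is to split the desired congruence into two steps:
\[
\eta_q(D',\mathbf{p}')(l'_i)\;\equiv\;\eta_q(D,\mathbf{p})(\pi(l'_i))\;\equiv\;\eta_q(D,\mathbf{p})(l_i)\pmod{A_q}.
\]

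For the left congruence I will prove, by induction on $q$, that $\eta_q(D',\mathbf{p}')(a')\equiv\eta_q(D,\mathbf{p})(\pi(a'))\pmod{A_q}$ for every arc $a'$ of $D'$; since both $\eta_q(D',\mathbf{p}')$ and $\eta_q(D,\mathbf{p})\circ\pi$ are homomorphisms $\overline{A'}\to A$, this congruence extends to arbitrary products and in particular to $l'_i$. The base case $q=1$ is immediate. For the inductive step, arcs on components other than $i$ and pre-kink arcs on the $i$th component satisfy $\pi(v'_{k,l-1})=v_{k,l-1}$ directly, so Lemma~\ref{lem-p290}(1) together with the hypothesis at level $q-1$ yields the claim. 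The delicate case is a post-kink arc $a'_{i,l+1}$ of the $i$th component (with $l\geq j$), for which $\pi(v'_{i,l})=v_{i,j-1}\,a_{ij}^\varepsilon\,v_{i,j-1}^{-1}\,v_{i,l-1}$ carries an extra conjugate factor. Here the key ingredient is the Wirtinger-type identity
\[
\eta_{q-1}(v_{i,j-1})\,\eta_{q-1}(a_{ij})^\varepsilon\,\eta_{q-1}(v_{i,j-1})^{-1}\;\equiv\;\alpha_i^\varepsilon\pmod{A_{q-1}},
\]
itself a consequence of the definition of $\eta_q(a_{ij})$ together with Lemmas~\ref{lem-6k}(1) and~\ref{lem-p290}(1). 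It gives $\eta_{q-1}(\pi(v'_{i,l}))\equiv\alpha_i^\varepsilon\,\eta_{q-1}(v_{i,l-1})\pmod{A_{q-1}}$, and conjugating $\alpha_i$ by $\alpha_i^\varepsilon\,\eta_{q-1}(v_{i,l-1})$ yields exactly $\eta_q(a_{i,l})=\eta_q(\pi(a'_{i,l+1}))$ modulo $A_q$ via Lemma~\ref{lem-p290}(1).

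For the right congruence, direct expansion in $\overline{A}$ gives
\[
\pi(l'_i)\,l_i^{-1}\;=\;a_{i1}^{-\varepsilon}\cdot a_{i1}^{-w_i}\bigl(v_{i,j-1}\,a_{ij}^\varepsilon\,v_{i,j-1}^{-1}\bigr)\,a_{i1}^{w_i}.
\]
After applying $\eta_q$ and invoking the analogous Wirtinger identity at level $q$, the parenthesised factor is $\equiv\alpha_i^\varepsilon\pmod{A_q}$; its conjugate by $\alpha_i^{w_i}$ remains $\alpha_i^\varepsilon$, and the outer $\alpha_i^{-\varepsilon}$ cancels it to leave $1$. The main obstacle lies in the post-kink case of the inductive argument: since the partial longitudes $v'_{i,l}$ and $v_{i,l-1}$ have genuinely different lengths, absorbing the extra factor $\tilde u^\varepsilon$ requires the synchronised use of Lemma~\ref{lem-6k}(1) (to swap between $\eta_{q-1}$- and $\eta_q$-values) and Lemma~\ref{lem-p290}(1) (to preserve congruences under conjugation).
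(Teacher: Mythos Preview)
Your proof is correct and follows essentially the same approach as the paper. The paper sets up an explicit arc correspondence (the arcs $a'_{ij}$ plus one extra arc $b$) and proves the analogue of your arc-level congruence as Claim~3.3, while you package the same correspondence as a collapse homomorphism $\pi$; your ``Wirtinger-type identity'' is exactly the paper's manoeuvre $\eta_q(x^{-1})\alpha_1\eta_q(x)=\eta_{q+1}(a_{1h})\equiv\eta_q(a_{1h})$, and both arguments hinge on the same inductive use of Lemmas~\ref{lem-p290}(1) and~\ref{lem-6k}(1).
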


\begin{proof}
There are four R1 moves depending on orientations of strands. 
By~\cite[Theorem~1.1]{P}, it is enough to consider the two moves R1a and R1b in a disk~$\delta$ as shown in Figure~\ref{R1}. 
Here, the symbol~$\circ$ in the figure denotes either a base point or an under-crossing. 
Without loss of generality, we may assume that the R1a/R1b move is applied to the 1st component. 

\begin{figure}[htb]
  \begin{center}
    \vspace{1em}
    \begin{overpic}[width=12cm]{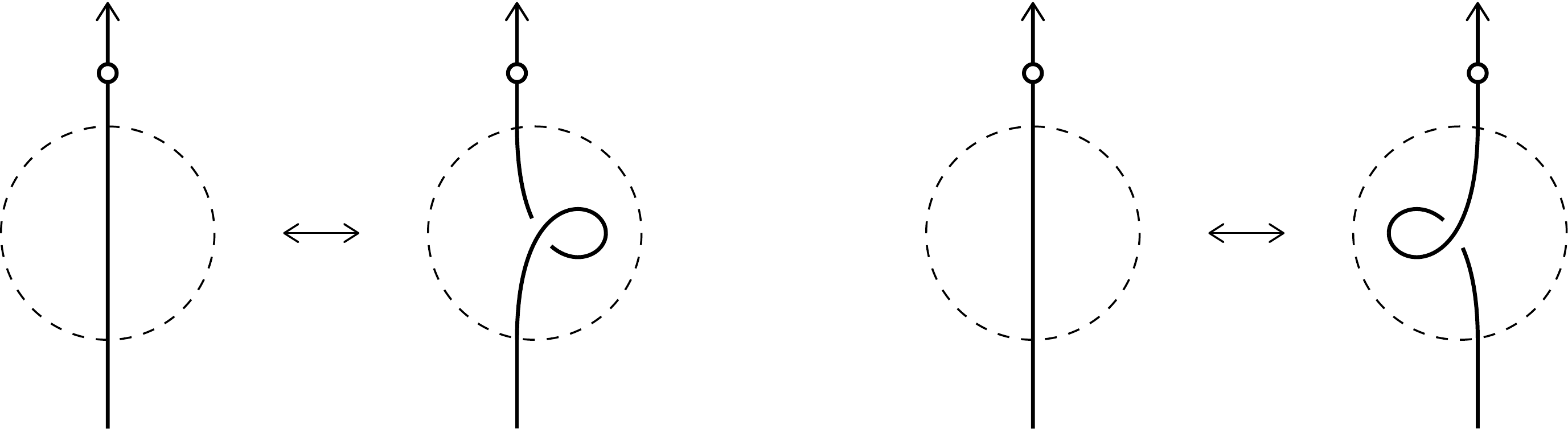}
      \put(61.5,48){R1a}
      \put(9,-15){$(D,\mathbf{p})$}
      \put(96,-15){$(D',\mathbf{p}')$}
      \put(0,61){$\delta$}
      \put(93,61){$\delta$}
      \put(30,5){$a_{1h}$}
      \put(30,95){$a_{1h+1}$}
      \put(30,82){or $a_{11}$}
      \put(119,5){$a'_{1h}$}
      \put(119,95){$a'_{1h+1}$}
      \put(119,82){or $a'_{11}$}
      \put(118,54){$b$}
      \put(262.5,48){R1b}
      \put(210.5,-15){$(D,\mathbf{p})$}
      \put(305,-15){$(D',\mathbf{p}')$}
      \put(202,61){$\delta$}
      \put(294,61){$\delta$}
      \put(230,5){$a_{1h}$}
      \put(230,95){$a_{1h+1}$}
      \put(230,82){or $a_{11}$}
      \put(327,5){$a'_{1h}$}
      \put(327,95){$a'_{1h+1}$}
      \put(327,82){or $a'_{11}$}
      \put(313,54){$b$}
    \end{overpic}
  \end{center}
  \vspace{1em}
  \caption{An R1a/R1b move which relates $(D,\mathbf{p})$ to $(D',\mathbf{p}')$}
  \label{R1}
\end{figure} 

Let $a_{ij}$ $(1\leq i\leq n, 1\leq j\leq m_i+1)$ be the arcs of $(D,\mathbf{p})$ as given in Section~\ref{sec-prelim}. 
Let $a'_{ij}$ and $b$ be the arcs of $(D',\mathbf{p}')$ such that each $a'_{ij}$ corresponds to the arc $a_{ij}$ of $(D,\mathbf{p})$, and $b$ intersects the disk $\delta$ as shown in Figure~\ref{R1}. 
We put $\eta'_{q}=\eta_{q}(D',\mathbf{p}')$ for short, and note that the domain of $\eta'_{q}$ is the free group $\overline{A'}$ on $\{a'_{ij}\}\cup\{b\}$. 
Since the R1a/R1b move that relates $(D,\mathbf{p})$ to $(D',\mathbf{p}')$ is applied to the $1$st component, $l_{1}$ and $l'_{1}$ can be written in the forms 
\[
l_{1}=a_{11}^{-w_{1}}xy\hspace{1em} \mbox{and}\hspace{1em}  
l'_{1}=
\begin{cases}
(a'_{11})^{-w_{1}-1}x'a'_{1h}y' & \mbox{(R1a move case)}, \\
(a'_{11})^{-w_{1}-1}x'by' &\mbox{(R1b move case)}
\end{cases}
\]
for certain words $x,y\in \overline{A}$ and $x',y'\in\overline{A'}$ such that $x$ and $y$ are obtained from $x'$ and $y'$, respectively, by replacing $b$ with $a_{1h}$, and $a'_{st}$ with $a_{st}$ for all $s,t$. 
For $2\leq i\leq n$, each $l_{i}$ is obtained from $l'_{i}$ by replacing $b$ with $a_{1h}$, and $a'_{st}$ with $a_{st}$ for all $s,t$.
To complete the proof, we need the following.

\begin{claim}\label{claim-R1}
$(1)$~$\eta'_{q}(a'_{1h})\equiv\eta'_{q}(b)\pmod{A_{q}}$. 

$(2)$~$\eta_{q}(a_{ij})\equiv\eta'_{q}(a'_{ij})\pmod{A_{q}}$ for any $i, j$. 
\end{claim} 

Before showing this claim, we observe that it implies Proposition~\ref{prop-R1}. 

For $2\leq i\leq n$, the congruence $\eta_{q}(l_{i})\equiv\eta'_{q}(l'_{i})\pmod{A_{q}}$ follows from Claim~\ref{claim-R1} immediately. 
For $i=1$, in the R1a move case, it follows from Lemma~\ref{lem-6k}(1) and Claim~\ref{claim-R1} that
\begin{eqnarray*}
\eta_{q}(l_{1})=\alpha_{1}^{-w_{1}}\eta_{q}(xy) 
&=& \alpha_{1}^{-w_{1}}\alpha_{1}^{-1}\eta_{q}(x)\left(\eta_{q}(x^{-1})\alpha_{1}\eta_{q}(x)\right)\eta_{q}(y) \\
&=& \alpha_{1}^{-w_{1}-1}\eta_{q}(x)\eta_{q+1}(a_{1h})\eta_{q}(y) \\
&\equiv& \alpha_{1}^{-w_{1}-1}\eta_{q}(x)\eta_{q}(a_{1h})\eta_{q}(y) \pmod{A_{q}} \\ 
&\equiv& \alpha_{1}^{-w_{1}-1}\eta'_{q}(x'a'_{1h}y') \pmod{A_{q}} \\
&=& \eta'_{q}((a'_{11})^{-w_{1}-1}x'a'_{1h}y') =\eta'_{q}(l'_{1}). 
\end{eqnarray*} 
In the R1b move case, we similarly obtain the conclusion. 
\end{proof}

\begin{proof}[{Proof of Claim~\ref{claim-R1}}]
(1)~
By Lemma~\ref{lem-6k}(2), we have 
\[
\begin{cases}
\eta'_{q}(b^{-1}(a'_{1h})^{-1}a'_{1h}a'_{1h})\equiv1\pmod{A_{q}} & \mbox{(R1a move case)}, \\
\eta'_{q}(b^{-1}{b}^{-1}a'_{1h}b)\equiv1\pmod{A_{q}} & \mbox{(R1b move case)}.  
\end{cases}
\]
Hence it follows that 
\begin{eqnarray*}
\eta'_{q}(b^{-1}a'_{1h})=\eta'_{q}(b^{-1}(a'_{1h})^{-1}a'_{1h}a'_{1h}) 
\equiv 1\pmod{A_{q}} 
\end{eqnarray*} 
in the R1a move case, and that 
\begin{eqnarray*}
\eta'_{q}(b^{-1}a'_{1h})=\eta'_{q}(bb^{-1}b^{-1}a'_{1h}bb^{-1}) 
&\equiv& \eta'_{q}(bb^{-1})\pmod{A_{q}} \\
&=& 1 
\end{eqnarray*} 
in the R1b move case. 

(2)~
This is proved by induction on $q$. 
The assertion is certainly true for $q=1$ or $j=1$. 
Assume that $q\geq1$ and $j\geq2$, and consider the R1a move case. 

Let $v_{ij-1}$ and $v'_{ij-1}$ be partial longitudes of $(D,\mathbf{p})$ and $(D',\mathbf{p}')$ as given in Section~\ref{sec-prelim}, respectively. 
If $i\neq1$ or $v'_{ij-1}$ does not contain the word $x'a'_{1h}$, then 
$v_{ij-1}$ is obtained from $v'_{ij-1}$ by replacing $b$ with $a_{1h}$, and $a'_{st}$ with $a_{st}$ for all $s,t$. 
By (1) and the induction hypothesis, we have 
\[
\eta_{q}(v_{ij-1})\equiv\eta'_{q}(v'_{ij-1}) \pmod{A_{q}}. 
\] 
Therefore Lemma~\ref{lem-p290}(1) implies that 
\begin{eqnarray*}
\eta_{q+1}(a_{ij})&=&\eta_{q}(v_{ij-1}^{-1})\alpha_{i}\eta_{q}(v_{ij-1}) \\ 
&\equiv& \eta'_{q}((v'_{ij-1})^{-1})\alpha_{i}\eta'_{q}(v'_{ij-1}) \pmod{A_{q+1}} \\ 
&=&\eta'_{q+1}(a'_{ij}). 
\end{eqnarray*} 
If $i=1$ and $v'_{ij-1}$ contains the word $x'a'_{1h}$, then 
$v_{1j-1}$ and $v'_{1j-1}$ have the forms 
\[
v_{1j-1}=xz\hspace{1em} \mbox{and}\hspace{1em} v'_{1j-1}=x'a'_{1h}z' 
\] 
for certain words $z\in\overline{A}$ and $z'\in\overline{A'}$ such that $z$ is obtained from $z'$ by replacing $b$ with $a_{1h}$, and $a'_{st}$ with $a_{st}$ for all $s,t$. 
Combining (1), Lemma~\ref{lem-6k}(1) and the induction hypothesis, it follows that 
\begin{eqnarray*}
\eta_{q}(v_{1j-1})&=& \alpha_{1}^{-1}\eta_{q}(x)\left(\eta_{q}(x^{-1})\alpha_{1}\eta_{q}(x)\right)\eta_{q}(z) \\ 
&=& \alpha_{1}^{-1}\eta_{q}(x)\eta_{q+1}(a_{1h})\eta_{q}(z) \\
&\equiv& \alpha_{1}^{-1}\eta_{q}(x)\eta_{q}(a_{1h})\eta_{q}(z) \pmod{A_{q}} \\
&\equiv& \alpha_{1}^{-1}\eta'_{q}(x'a'_{1h}z')\pmod{A_{q}} \\
&=& \alpha_{1}^{-1}\eta'_{q}(v'_{ij-1}). 
\end{eqnarray*}
Hence Lemma~\ref{lem-p290}(1) implies that 
\begin{eqnarray*}
\eta_{q+1}(a_{1j})&=&\eta_{q}(v_{1j-1}^{-1})\alpha_{1}\eta_{q}(v_{1j-1}) \\ 
&\equiv& \eta'_{q}((v'_{1j-1})^{-1})\alpha_{1}\alpha_{1}\alpha_{1}^{-1}\eta'_{q}(v'_{ij-1})\pmod{A_{q+1}} \\
&=&\eta_{q+1}(a'_{1j}). 
\end{eqnarray*} 
The proof for the R1b move case is similar. 
\end{proof}

\begin{proposition}\label{prop-R2}
If $(D,\mathbf{p})$ and $(D',\mathbf{p}')$ are related by a single R2 move relative base point system, then $\eta_{q}(D,\mathbf{p})(l_{i})\equiv\eta_{q}(D',\mathbf{p}')(l'_{i})\pmod{A_{q}}$. 
\end{proposition}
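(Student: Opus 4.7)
The plan is to follow closely the template of Proposition~\ref{prop-R1}. By~\cite[Theorem~1.1]{P} I first reduce to a short list of orientation-normalized R2 moves; fix one, applied (without loss of generality) between an under-strand of the $1$st component and an over-arc~$b$. In $D'$ the under-arc $a_{1h}$ gets subdivided by two new under-crossings into three consecutive arcs $a'_{1h}, a'_{1h+1}, a'_{1h+2}$, while the over-strand contributes a single new arc~$b$ to $\overline{A'}$. The two new crossings have opposite signs $\e$ and $-\e$, so the self-crossing writhes are preserved, $w'_i = w_i$ for every $i$, and $l'_1$ differs from $l_1$ only by the renaming of arcs and by the insertion of the factor $b^{\e}b^{-\e}$ at a single interior position inside the partial longitude of the $1$st component.

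The key step is to prove an R2-analog of Claim~\ref{claim-R1}, namely that (i)~$\eta'_q(a'_{1h}) \equiv \eta'_q(a'_{1h+2}) \pmod{A_q}$, and (ii)~$\eta_q(a_{ij}) \equiv \eta'_q(a'_{ij'}) \pmod{A_q}$ for every other corresponding arc pair. For (i) I apply Lemma~\ref{lem-6k}(2) at each of the two new crossings: the Wirtinger words $(a'_{1h+1})^{-1}b^{-\e}a'_{1h}b^{\e}$ and $(a'_{1h+2})^{-1}b^{\e}a'_{1h+1}b^{-\e}$ are each $\equiv 1\pmod{A_q}$, and telescoping the two resulting congruences gives
\[
\eta'_q(a'_{1h+2}) \equiv \eta'_q(b^{\e})\eta'_q(b^{-\e})\,\eta'_q(a'_{1h})\,\eta'_q(b^{\e})\eta'_q(b^{-\e}) = \eta'_q(a'_{1h}) \pmod{A_q},
\]
the crucial point being that $b^{\e}b^{-\e}=1$ holds \emph{exactly} in $\overline{A'}$. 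For (ii) I induct on $q$ exactly as in the proof of Claim~\ref{claim-R1}(2); the only nontrivial case is that the partial longitude past the R2 region gains an inserted factor $b^{\e}b^{-\e}$ which already vanishes under $\eta'_q$ before any conjugation, so Lemma~\ref{lem-p290}(1) propagates the congruence to every later arc without loss.

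Finishing is then routine. For $i\neq 1$ the preferred longitudes $l_i, l'_i$ differ only by arc renaming, and (ii) combined with Lemma~\ref{lem-p290}(1) yields $\eta_q(l_i)\equiv\eta'_q(l'_i)\pmod{A_q}$. For $i=1$ the same exact cancellation of $b^{\e}b^{-\e}$ inside $v'_{1m'_1}$, together with $w'_1=w_1$ and (i)--(ii), produces the desired congruence on $l_1$. I expect the main obstacle to be the bookkeeping: R2 admits several variants according to the signs of the new crossings and the orientation of the over-strand, and the self-R2 subcase (where $b$ itself lies on component~$1$ and therefore appears inside its own partial longitudes) requires a little extra care. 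However, because the cancellation $b^{\e}b^{-\e}=1$ is on the nose rather than only modulo $A_q$ as in the analogous step for R1, the argument is in fact cleaner than its R1 counterpart.
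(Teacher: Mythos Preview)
Your approach is essentially the same as the paper's, differing only in labeling conventions: the paper keeps the over-arc under its old name $a'_{kl}$ and calls the two genuinely new arcs (the middle and final pieces of the subdivided under-strand) $c$ and $b$, whereas you use $b$ for the over-arc and $a'_{1h+1}, a'_{1h+2}$ for the new under-strand pieces. One small slip: the over-strand does \emph{not} contribute a new generator to $\overline{A'}$, since it acquires no new under-crossings; both new arcs lie on the under-strand. With that correction your Claim~(i) is exactly Claim~\ref{claim-R2}(1), proved by the same telescoping of two Wirtinger relators via Lemma~\ref{lem-6k}(2), and your Claim~(ii) is Claim~\ref{claim-R2}(2); your remark that the cancellation $b^{\e}b^{-\e}=1$ is exact (so the argument is cleaner than for R1) is precisely what the paper exploits.
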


\begin{proof}
There are four R2 moves depending on orientations of strands. 
By~\cite[Theorem~1.1]{P}, it is enough to consider the R2 move in a disk~$\delta$ as shown in Figure~\ref{R2}. 

\begin{figure}[htb]
  \begin{center}
    \begin{overpic}[width=9cm]{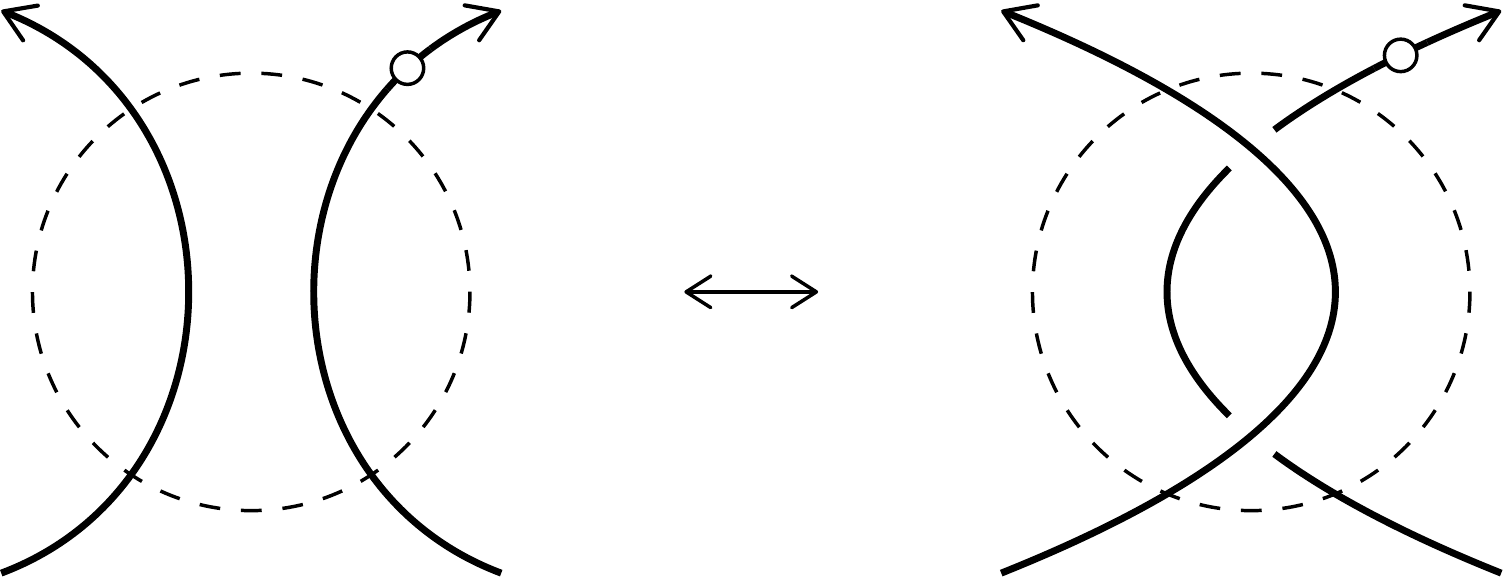}
      \put(122,55){R2}
      \put(30,-15){$(D,\mathbf{p})$}
      \put(41,90){$\delta$}
      \put(-16,-2){$a_{kl}$}
      \put(90,-2){$a_{gh}$}
      \put(90,95){$a_{gh+1}$}
      \put(90,82){or $a_{g1}$}
      \put(198,-15){$(D',\mathbf{p}')$}
      \put(211,90){$\delta$}
      \put(190,46){$c$}
      \put(227,71){$b$}
      \put(155,-2){$a'_{kl}$}
      \put(261,-2){$a'_{gh}$}
      \put(261,95){$a'_{gh+1}$}
      \put(261,82){or $a'_{g1}$}
    \end{overpic}
  \end{center}
  \vspace{1em}
  \caption{An R2 move which relates $(D,\mathbf{p})$ to $(D',\mathbf{p}')$}
  \label{R2}
\end{figure}

Let $a_{ij}$ $(1\leq i\leq n, 1\leq j\leq m_i+1)$ be the arcs of $(D,\mathbf{p})$ as given in Section~\ref{sec-prelim}. 
Let $a'_{ij},b$ and $c$ be the arcs of $(D',\mathbf{p}')$ such that each $a'_{ij}$ corresponds to the arc $a_{ij}$ of $(D,\mathbf{p})$, and $b$ and $c$ intersect the disk $\delta$ as shown in Figure~\ref{R2}. 
Then the domain of $\eta'_{q}=\eta_{q}(D',\mathbf{p}')$ is the free group on $\{a'_{ij}\}\cup\{b,c\}$. 
Note that all the preferred longitudes $l_{i}'$ of $(D',\mathbf{p}')$ do not contain the letter $c$. 
Each $l_{i}$ $(1\leq i\leq n)$ is obtained from $l'_{i}$ by replacing $b$ with $a_{gh}$ in Figure~\ref{R2}, and $a'_{st}$ with $a_{st}$ for all $s,t$. 
Hence Proposition~\ref{prop-R2} follows from the claim below. 
\end{proof}

\begin{claim}\label{claim-R2}
$(1)$~ 
$\eta'_{q}(a'_{gh})\equiv\eta'_{q}(b)\pmod{A_{q}}$. 

$(2)$~ 
$\eta_{q}(a_{ij})\equiv\eta'_{q}(a'_{ij})\pmod{A_{q}}$ for any $i, j$. 
\end{claim}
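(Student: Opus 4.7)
The plan is to prove the two parts in order, with Part~(1) supplying the key ingredient that Part~(2) needs to absorb the extra arcs.

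For Part~(1), I would apply Lemma~\ref{lem-6k}(2) at each of the two new crossings created by the R2 move. Both crossings have $a'_{kl}$ as their over-arc, and the defining geometric feature of an R2 move is that their signs are opposite; say they are $\varepsilon$ at the bottom crossing and $-\varepsilon$ at the top. Lemma~\ref{lem-6k}(2) then yields
\[
\eta'_{q}(c) \equiv \eta'_{q}\bigl((a'_{kl})^{-\varepsilon}\, a'_{gh}\, (a'_{kl})^{\varepsilon}\bigr)\quad\text{and}\quad \eta'_{q}(b) \equiv \eta'_{q}\bigl((a'_{kl})^{\varepsilon}\, c\, (a'_{kl})^{-\varepsilon}\bigr) \pmod{A_q}.
\]
Composing the two congruences, with Lemma~\ref{lem-p290}(1) used to conjugate the first by $\eta'_q((a'_{kl})^{\varepsilon})$ inside $A_q$, the two conjugations by $a'_{kl}$ cancel and one obtains $\eta'_q(b) \equiv \eta'_q(a'_{gh}) \pmod{A_q}$.

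For Part~(2), I would mirror the induction on $q$ used in the proof of Claim~\ref{claim-R1}(2); the cases $q=1$ and $j=1$ are immediate. For $q\geq 1$ and $j\geq 2$, I compare the partial longitudes $v_{ij-1}$ of $(D,\mathbf{p})$ and $v'_{ij-1}$ of $(D',\mathbf{p}')$. Apart from the standard renaming $a_{st}\mapsto a'_{st}$, the word $v'_{ij-1}$ may differ from $v_{ij-1}$ in two ways: (i) certain occurrences of $a_{gh}$ are replaced by $b$, namely those corresponding to over-crossings of the $i$th strand whose over-arc lies on the portion of the $g$th strand past the disk $\delta$; and (ii) if $i=g$ and the index $j-1$ is past the R2 location, two consecutive letters $(a'_{kl})^{\varepsilon}$ and $(a'_{kl})^{-\varepsilon}$ are inserted, arising from the two new under-crossings of the $g$th strand.

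The two inserted letters in~(ii) already cancel in the free group $\overline{A'}$ and contribute nothing to $\eta'_q$. For~(i), Part~(1) together with the induction hypothesis on~$q$ shows that replacing $b$ by $a'_{gh}$ preserves the value of $\eta'_q$ modulo $A_q$. Combining these observations gives $\eta_q(v_{ij-1}) \equiv \eta'_q(v'_{ij-1}) \pmod{A_q}$, and Lemma~\ref{lem-p290}(1) then upgrades this to $\eta_{q+1}(a_{ij}) \equiv \eta'_{q+1}(a'_{ij}) \pmod{A_{q+1}}$, closing the induction.

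The main obstacle will be the diagrammatic bookkeeping in case~(ii): one must check that, as the $g$th strand is traversed from $p_g$, the two new R2 under-crossings really do yield consecutive letters in $v'_{gj-1}$, and with opposite signs, so that they multiply to the identity in $\overline{A'}$. This in turn relies on the locality of the R2 inside $\delta$ and on the fact that $a'_{kl}$ is the overstrand at both new crossings.
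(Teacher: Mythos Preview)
Your proof is correct and follows essentially the same route as the paper: Part~(1) via Lemma~\ref{lem-6k}(2) at the two new crossings, and Part~(2) by induction on $q$ comparing partial longitudes and invoking Lemma~\ref{lem-p290}(1). Your explicit case~(ii)---the two consecutive letters $(a'_{kl})^{\pm\varepsilon}$ that cancel in $\overline{A'}$---is a detail the paper absorbs silently into its assertion that $v_{ij-1}$ is obtained from $v'_{ij-1}$ by the substitution $b\mapsto a_{gh}$, $a'_{st}\mapsto a_{st}$; one small remark is that in Part~(1) the step you attribute to Lemma~\ref{lem-p290}(1) is really just the normality of $A_q$ (conjugating a congruence $\bmod\ A_q$ by a fixed element), not the commutator-depth upgrade that lemma provides.
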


\begin{proof}
(1)~
By Lemma~\ref{lem-6k}(2), we have 
\[
\eta'_{q}(b^{-1}a'_{kl}\,c\,(a'_{kl})^{-1})\equiv1\pmod{A_{q}}
\]
and 
\[
\eta'_{q}(c^{-1}(a'_{kl})^{-1}a'_{gh}a'_{kl})\equiv1\pmod{A_{q}}. 
\]
This implies that 
\begin{eqnarray*}
\eta'_{q}(b^{-1}a'_{gh})
&=&\eta'_{q}(b^{-1}a'_{kl}\,c\,(a'_{kl})^{-1}a'_{kl}c^{-1}(a'_{kl})^{-1}a'_{gh}a'_{kl}(a'_{kl})^{-1}(a'_{gh})^{-1}a'_{gh}) \\
&\equiv&\eta'_{q}(a'_{kl}(a'_{kl})^{-1}(a'_{gh})^{-1}a'_{gh})\pmod{A_{q}} \\
&=&1. 
\end{eqnarray*}

(2)~This is proved by induction on $q$. 
The assertion certainly holds for $q=1$ or $j=1$. 
Assume that $q\geq1$ and $j\geq2$. 
Let $v_{ij-1}$ and $v'_{ij-1}$ be partial longitudes of $(D,\mathbf{p})$ and $(D',\mathbf{p}')$, respectively. 
Then $v_{ij-1}$ is obtained from $v'_{ij-1}$ by replacing $b$ with $a_{gh}$ in Figure~\ref{R2}, and $a'_{st}$ with $a_{st}$ for all $s,t$. 
By (1) and the induction hypothesis, we have
\[
\eta_{q}(v_{ij-1})\equiv\eta'_{q}(v'_{ij-1}) \pmod{A_{q}}. 
\]
Hence it follows from Lemma~\ref{lem-p290}(1) that 
\begin{eqnarray*}
\eta_{q+1}(a_{ij})&=&\eta_{q}(v_{ij-1}^{-1})\alpha_{i}\eta_{q}(v_{ij-1}) \\
&\equiv& \eta'_{q}((v'_{ij-1})^{-1})\alpha_{i}\eta'_{q}(v'_{ij-1}) \pmod{A_{q+1}} \\
&=& \eta'_{q+1}(a'_{ij}). 
\end{eqnarray*} 
\end{proof}

\begin{proposition}\label{prop-R3}
If $(D,\mathbf{p})$ and $(D',\mathbf{p}')$ are related by a single R3 move relative base point system, then $\eta_{q}(D,\mathbf{p})(l_{i})\equiv\eta_{q}(D',\mathbf{p}')(l'_{i})\pmod{A_{q}}$. 
\end{proposition}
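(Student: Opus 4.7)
The plan is to follow the same two-step strategy used in Propositions~\ref{prop-R1} and~\ref{prop-R2}. First, by~\cite[Theorem~1.1]{P}, it suffices to check the statement for a single standard R3 configuration performed inside a disk~$\delta$ (after fixing orientations and signs of the three crossings). Label the arcs of $(D,\mathbf{p})$ as $a_{ij}$ and those of $(D',\mathbf{p}')$ as $a'_{ij}$, with the natural index correspondence away from~$\delta$; inside $\delta$ the two diagrams have additional interior arcs, which we denote by $b_1,\ldots,b_t$ for $(D,\mathbf{p})$ and by $b'_1,\ldots,b'_{t'}$ for $(D',\mathbf{p}')$. Since all base points lie outside $\delta$, each preferred longitude $l_i$ is obtained from $l'_i$ by a natural substitution that replaces every interior-arc letter appearing in $l'_i$ by the corresponding letter of $(D,\mathbf{p})$, together with $a'_{st}\mapsto a_{st}$ for all $s,t$.

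The key step is to establish an analogue of Claims~\ref{claim-R1} and~\ref{claim-R2}, consisting of two parts. Part~(1): each interior arc $b_r$ of $(D,\mathbf{p})$ satisfies $\eta_{q}(b_r)\equiv\eta_{q}(a_{j_r l_r})\pmod{A_{q}}$, where $a_{j_r l_r}$ is a fixed boundary arc on the same strand of~$\delta$, and similarly for the $b'_r$. Part~(2): $\eta_{q}(a_{ij})\equiv\eta'_{q}(a'_{ij})\pmod{A_{q}}$ for every $i,j$. Part~(1) is a direct application of Lemma~\ref{lem-6k}(2) at each under-crossing inside $\delta$: the relation $\eta_{q}(r_{ij})\equiv 1\pmod{A_q}$ shows that an interior arc is congruent, modulo $A_q$, to a conjugate of the adjacent arc on the same strand by the over-crossing letter, and iterating this along the strand out of $\delta$ yields the desired congruence. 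Part~(2) is proved by induction on $q$, using the inductive definition of $\eta_q$ via partial longitudes together with Part~(1) and Lemma~\ref{lem-p290}(1), precisely as in the proofs of Claims~\ref{claim-R1}(2) and~\ref{claim-R2}(2).

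Once the claim is in hand, the proposition is immediate: for each $i$, applying Part~(1) to rewrite the interior-arc letters in $\eta'_q(l'_i)$ and then Part~(2) to identify $\eta'_q(a'_{ij})$ with $\eta_q(a_{ij})$ gives $\eta_{q}(l_{i})\equiv\eta'_{q}(l'_{i})\pmod{A_{q}}$.

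The main obstacle is the combinatorial bookkeeping inside $\delta$: an R3 move involves three strands and three crossings, so several interior arcs may appear on each side, and Part~(1) must be verified by chaining Lemma~\ref{lem-6k}(2) through each crossing along each strand. Unlike the R2 case, where only one new pair of letters appears, the R3 case requires a more delicate case analysis depending on the labels of the three strands (whether they belong to distinct components or coincide), but no fundamentally new tool beyond Lemmas~\ref{lem-p290} and~\ref{lem-6k} is needed; the argument is a careful combinatorial extension of the R2 proof.
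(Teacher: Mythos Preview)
Your outline follows the pattern of the R1/R2 proofs, but it misses the essential new phenomenon in R3 and, as stated, does not go through.

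The claim that ``each preferred longitude $l_i$ is obtained from $l'_i$ by a natural substitution'' is false for the component whose strand goes under both of the other two inside~$\delta$. In the paper's labeling, this component's longitude reads $l_1=a_{11}^{-w_1}x\,c^{-1}d\,y$ on one side and $l'_1=(a'_{11})^{-w_1}x'\,e'(c')^{-1}\,y'$ on the other. These differ not by replacing an interior-arc letter with a boundary one, but by a genuine change in the \emph{word} of over-arcs: the two over-strands are met in the opposite order, and the middle strand contributes a different arc ($d$ versus $e'$). Your Part~(1) concerns the interior arc $b$ on the under-under strand, but $b$ never appears in any longitude (it is never an over-arc inside~$\delta$); what appears are the over-arcs $c,d,e$, and the needed identity is $\eta_q(c^{-1}d)\equiv\eta_q(ec^{-1})\pmod{A_q}$. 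This comes from the Wirtinger relation at the crossing between the \emph{other} two strands, namely $\eta_q(e^{-1}c^{-1}dc)\equiv 1$ via Lemma~\ref{lem-6k}(2), and is not an ``interior arc $\equiv$ boundary arc'' statement at all.

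Relatedly, your Part~(2) as stated (for \emph{all} $i,j$) is false: for the interior arc $b$ on the under-under strand one has $\eta_q(b)\not\equiv\eta'_q(b')\pmod{A_q}$ in general, since their partial longitudes are $xc^{-1}$ and $x'e'$ and $\eta_q(c^{-1})\not\equiv\eta_q(e)$. The paper's Claim~\ref{claim-R3} explicitly excludes $b$ and $b'$. This does not affect the final conclusion because $b,b'$ do not occur in any $l_i$, but it shows that the R3 case is not ``R2 with more bookkeeping'': the inductive step for the partial longitudes of component~$1$ must itself use the swap $c^{-1}d\leftrightarrow ec^{-1}$, not merely a substitution of letters. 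You are right that no tools beyond Lemmas~\ref{lem-p290} and~\ref{lem-6k} are needed, but the mechanism you describe is the wrong one.
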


\begin{proof}
There are eight R3 moves depending on orientations of strands. 
By~\cite[Theorem 1.1]{P}, we may consider the R3 move in a disk~$\delta$ as shown in Figure~\ref{R3}. 
Let $a_{ij}$ and $a'_{ij}$ $(1\leq i\leq n, 1\leq j\leq m_i+1)$ be the arcs of $(D,\mathbf{p})$ and $(D',\mathbf{p}')$, respectively, such that each $a'_{ij}$ corresponds to the arc~$a_{ij}$. 
Then the domain of $\eta'_{q}=\eta_{q}(D',\mathbf{p}')$ is the free group $\overline{A'}$ on $\{a'_{ij}\}$. 

\begin{figure}[htb]
  \begin{center}
    \begin{overpic}[width=10cm]{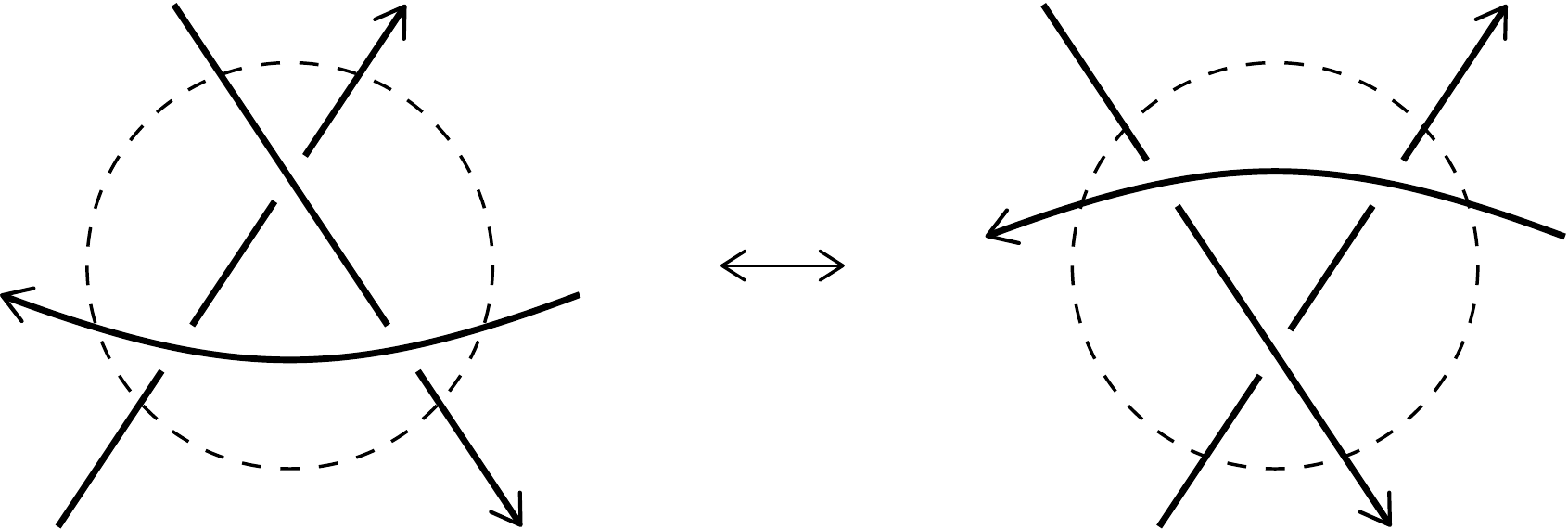}
      \put(136,55){R3}
      \put(37,-15){$(D,\mathbf{p})$}
      \put(50,90){$\delta$}
      \put(34,49){$b$}
      \put(97,45){$c$}
      \put(24,85){$d$}
      \put(98,9){$e$}
      \put(216,-15){$(D',\mathbf{p}')$}
      \put(229,90){$\delta$}
      \put(247,40){$b'$}
      \put(275,60){$c'$}
      \put(179,85){$d'$}
      \put(255,9){$e'$}
    \end{overpic}
  \end{center}
  \vspace{1em}
  \caption{An R3 move which relates $(D,\mathbf{p})$ to $(D',\mathbf{p}')$}
  \label{R3}
\end{figure}

As shown in Figure~\ref{R3}, let $b,c,d,e\in\{a_{kl}\}$ be arcs of $(D,\mathbf{p})$ which intersect~$\delta$.  
Similarly, let $b',c',d',e'\in\{a'_{kl}\}$ be arcs of $(D',\mathbf{p}')$ which intersect~$\delta$. 
Note that $l_{i}$ and $l'_{i}$ do not contain the letters $b$ and $b'$, respectively, for all $1\leq i\leq n$. 
Without loss of generality, we may assume that the arcs $b$ and $b'$ belong to 
the $1$st components of $(D,\mathbf{p})$ and $(D',\mathbf{p}')$, respectively. 
Then $l_{1}$ and $l'_{1}$ can be written in the forms 
\[
l_{1}=a_{11}^{-w_{1}}xc^{-1}dy\hspace{1em} \mbox{and}\hspace{1em}  l'_{1}=(a'_{11})^{-w_{1}}x'e'(c')^{-1}y' 
\] 
for certain words $x,y\in A$ and $x',y'\in\overline{A'}$ such that $x$ and $y$ are obtained from $x'$ and $y'$, respectively, by replacing $a'_{st}$ with $a_{st}$ for all $s,t$. 
For $2\leq i\leq n$, each $l_{i}$ is obtained from $l'_{i}$ by replacing $a'_{st}$ with $a_{st}$ for all $s,t$. 
To complete the proof, we need the following. 

\begin{claim}\label{claim-R3} 
For any letters $a_{ij}\neq b$ of $(D,\mathbf{p})$ and $a'_{ij}\neq b'$ of $(D',\mathbf{p}')$, 
\[\eta_{q}(a_{ij})\equiv\eta'_{q}(a'_{ij}) \pmod{A_{q}}.
\] 
\end{claim}

Before showing this claim, we observe it implies Proposition~\ref{prop-R3}. 

For $2\leq i\leq n$, the congruence $\eta_{q}(l_{i})\equiv\eta'_{q}(l'_{i})\pmod{A_{q}}$ follows from Claim~\ref{claim-R3} immediately. 
By Lemma~\ref{lem-6k}(2), we have $\eta_{q}(e^{-1}c^{-1}dc)\equiv1\pmod{A_{q}}$. 
Hence Claim~\ref{claim-R3} implies that 
\begin{eqnarray*}
\eta_{q}(l_{1})&=&\eta_{q}(a_{11}^{-w_{1}}xc^{-1}dy) \\
&=& \eta_{q}(a_{11}^{-w_{1}}xee^{-1}c^{-1}dcc^{-1}y) \\
&\equiv& \eta_{q}(a_{11}^{-w_{1}}xec^{-1}y) \pmod{A_{q}} \\
&\equiv& \eta'_{q}((a'_{11})^{-w_{1}}x'e'(c')^{-1}y') \pmod{A_{q}} \\
&=& \eta'_{q}(l'_{1}). 
\end{eqnarray*}
\end{proof}

\begin{proof}[{Proof of Claim~\ref{claim-R3}}]
This is proved by induction on $q$. 
The assertion certainly holds for $q=1$ or $j=1$. 
Assume that $q\geq1$ and $j\geq2$. 
Let $v_{ij-1}$ and $v'_{ij-1}$ be partial longitudes of $(D,\mathbf{p})$ and $(D',\mathbf{p}')$, respectively. 
If $i\neq1$ or $v_{ij-1}$ does not contain the word $xc^{-1}d$, 
then it is obtained from $v'_{ij-1}$ by replacing $a'_{st}$ with $a_{st}$ for all $s,t$. 
By the induction hypothesis, we have $\eta_{q}(v_{ij-1})\equiv\eta'_{q}(v'_{ij-1})\pmod{A_{q}}$. 
Therefore Lemma~\ref{lem-p290}(1) implies that 
\[
\eta_{q+1}(a_{ij})\equiv\eta'_{q+1}(a'_{ij})\pmod{A_{q+1}}.  
\] 
If $i=1$ and $v_{ij-1}$ contains $xc^{-1}d$, 
then $v_{1j-1}$ and $v'_{1j-1}$ can be written in the forms 
\[
v_{1j-1}=xc^{-1}dz \hspace{1em} \mbox{and}\hspace{1em} v'_{ij-1}=x'e'(c')^{-1}z'
\]
for certain words $z\in\overline{A}$ and $z'\in\overline{A'}$ such that $z$ is obtained from $z'$ by replacing $a'_{st}$ with $a_{st}$ for all $s,t$. 
Since $\eta_{q}(e^{-1}c^{-1}dc)\equiv1\pmod{A_{q}}$ by Lemma~\ref{lem-6k}(2), it follows that 
\begin{eqnarray*}
\eta_{q}(v_{1j-1})=\eta_{q}(xc^{-1}dz)
\equiv\eta_{q}(xec^{-1}z) \pmod{A_{q}}.
\end{eqnarray*}
By the induction hypothesis we have $\eta_{q}(v_{1j-1})\equiv\eta'_{q}(v'_{1j-1})\pmod{A_{q}}$, 
and hence $\eta_{q+1}(a_{1j})\equiv\eta'_{q+1}(a'_{1j})\pmod{A_{q+1}}$. 
\end{proof}

\begin{proposition}\label{prop-others}
If $(D,\mathbf{p})$ and $(D',\mathbf{p}')$ are related by one of V1--V4, OC relative base point system and the local move in Figure~\ref{w-bar}, then $\eta_{q}(D,\mathbf{p})(l_{i})\equiv\eta_{q}(D',\mathbf{p}')(l'_{i})\pmod{A_{q}}$. 
\end{proposition}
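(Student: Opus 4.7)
The plan is to verify the statement one move at a time, using the same overall scheme as Propositions~\ref{prop-R1}--\ref{prop-R3} but with a substantial simplification available: none of the moves V1, V2, V3, V4, OC, nor the local move of Figure~\ref{w-bar} creates, destroys, or otherwise alters any classical under-crossing of the diagram. All modifications concern only virtual crossings, classical over-crossings, or the position of the base point. Since the partial-longitude data $(u_{ij},\varepsilon_{ij})$ is recorded solely at classical under-crossings, and since virtual crossings do not split arcs, the combinatorial data defining the preferred longitudes is preserved in each case.

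Concretely, I would argue as follows. For V1, V2 and V3 only virtual crossings are added, removed, or permuted, so the arcs of $(D,\mathbf{p})$ and $(D',\mathbf{p}')$ are in canonical bijection $a_{ij}\leftrightarrow a'_{ij}$ preserving the over-strand letter $u_{ij}$ and sign $\varepsilon_{ij}$ at each classical crossing. For the local move of Figure~\ref{w-bar}, the base point simply crosses a virtual crossing; it remains on the same arc of its component (no under-crossing is created or traversed) so the arc decomposition is unchanged. For OC, only classical over-crossings on a single over-strand are rearranged, leaving each under-strand and its under-crossings untouched. For V4, the classical crossing merely slides past a virtual crossing without changing its under-strand or the identity of the over-arc above it. In every one of these situations the partial longitudes $v_{ij}$ and $v'_{ij}$ coincide as words, so $l_i=l'_i$ as words in the free group, and therefore $\eta_q(D,\mathbf{p})(l_i)=\eta_q(D',\mathbf{p}')(l'_i)$ holds exactly, which is much stronger than the required congruence modulo $A_q$. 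Notably, neither Lemma~\ref{lem-6k} nor Lemma~\ref{lem-p290} is needed for this proposition, in sharp contrast with the significantly more involved Propositions~\ref{prop-R1}--\ref{prop-R3}.

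The only real obstacle is the bookkeeping needed to enumerate the orientation sub-cases of V4 and OC. I would first invoke \cite[Theorem~1.1]{P} to cut this list down to a minimal collection of representative configurations, and then dispose of each remaining configuration by direct inspection of the diagrams. No algebraic computation is required beyond verifying, in each sub-case, that the canonical identification $a_{ij}\leftrightarrow a'_{ij}$ described above genuinely preserves the over-strand letters and signs at every classical crossing.
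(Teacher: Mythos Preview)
Your proposal is correct and matches the paper's own proof almost exactly: the paper likewise observes that under each of these moves the arcs of $(D',\mathbf{p}')$ are canonically identified with those of $(D,\mathbf{p})$ (displayed in a single figure), so that $\eta_{q}(D,\mathbf{p})=\eta_{q}(D',\mathbf{p}')$ and $l_{i}=l'_{i}$ hold on the nose. The only superfluous step in your plan is the appeal to \cite[Theorem~1.1]{P} for V4 and OC---Polyak's result treats only the classical moves R1--R3, but no orientation reduction is needed here anyway since the argument is manifestly insensitive to strand orientations.
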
 

\begin{proof}
Let $a_{ij}$ $(1\leq i\leq n,1\leq j\leq m_i+1)$ be the arcs of $(D,\mathbf{p})$. 
Then the arcs of $(D',\mathbf{p}')$ can be uniquely determined as shown in Figure~\ref{others}. 
Hence we have $\eta_{q}(D,\mathbf{p})=\eta_{q}(D',\mathbf{p}')$ and $l_{i}=l'_{i}$. 
\end{proof} 

\begin{figure}[htb]
  \begin{center}
    \vspace{1em}
    \begin{overpic}[width=12cm]{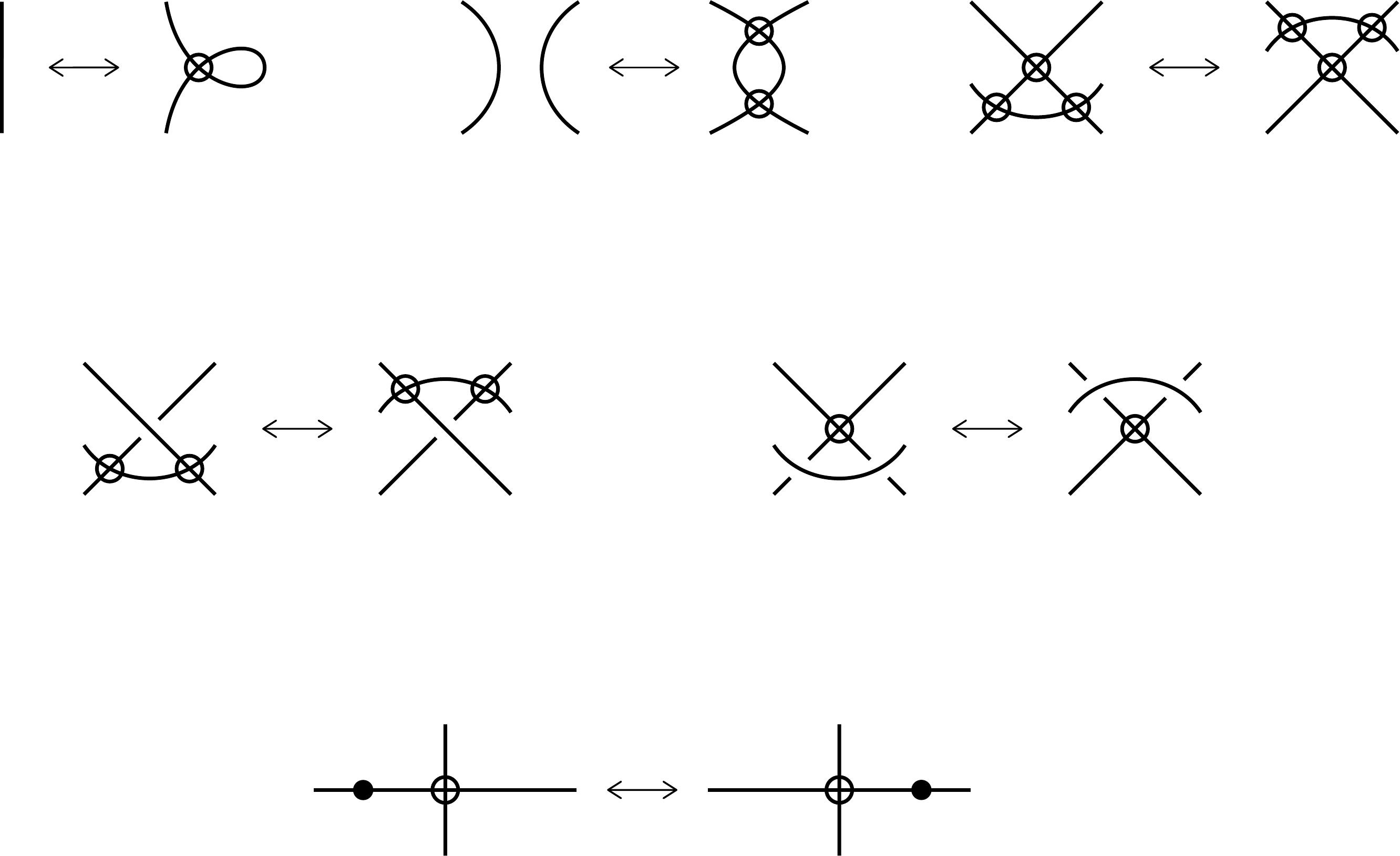}
      \put(15,198){V1}
      \put(151,198){V2}
      \put(283,198){V3}
      \put(67,110){V4}
      \put(233,110){OC} 
      \put(-14,161){$(D,\mathbf{p})$}
      \put(35,161){$(D',\mathbf{p}')$}
      \put(114,161){$(D,\mathbf{p})$}
      \put(168,161){$(D',\mathbf{p}')$}
      \put(239,161){$(D,\mathbf{p})$}
      \put(309,161){$(D',\mathbf{p}')$}
      \put(23,61){$(D,\mathbf{p})$}
      \put(92,61){$(D',\mathbf{p}')$}
      \put(191,61){$(D,\mathbf{p})$}
      \put(261,61){$(D',\mathbf{p}')$}
      \put(94,-15){$(D,\mathbf{p})$}
      \put(188,-15){$(D',\mathbf{p}')$}
      \put(-4,214){$a_{ij}$}
      \put(35,214){$a_{ij}$}
      \put(105,214){$a_{ij}$}
      \put(136,214){$a_{kl}$}
      \put(165,214){$a_{ij}$}
      \put(193,214){$a_{kl}$}
      \put(223,190){$a_{ij}$}
      \put(227,214){$a_{kl}$}
      \put(266,214){$a_{gh}$}
      \put(299,214){$a_{kl}$}
      \put(340,214){$a_{gh}$}
      \put(342,190){$a_{ij}$}
      \put(6,102){$a_{ij}$}
      \put(11,126){$a_{kl}$}
      \put(50,126){$a_{gh}$}
      \put(11,80){$a_{gh'}$}
      \put(83,126){$a_{kl}$}
      \put(122,126){$a_{gh}$}
      \put(127,102){$a_{ij}$}
      \put(83,80){$a_{gh'}$}
      \put(174,102){$a_{ij}$}
      \put(178,126){$a_{kl}$}
      \put(218,126){$a_{gh}$}
      \put(178,80){$a_{gh'}$}
      \put(218,80){$a_{kl'}$}
      
      \put(250,126){$a_{kl}$}
      \put(290,126){$a_{gh}$}
      \put(295,102){$a_{ij}$}
      \put(250,80){$a_{gh'}$}
      \put(290,80){$a_{kl'}$}
      \put(57,23){$a_{im_i+1}$}
      \put(121,23){$a_{i1}$}
      \put(103,37){$a_{kl}$}
      \put(84,5){$p_{i}$}
      \put(172,23){$a_{im_i+1}$}
      \put(230,23){$a_{i1}$}
      \put(199,37){$a_{kl}$}
      \put(221,5){$p_{i}$}
    \end{overpic}
  \end{center}
  \vspace{1em}
  \caption{Proof of Proposition~\ref{prop-others}}
  \label{others}
\end{figure}

\begin{proof}[{Proof of Theorem~\ref{th-w-bar}}]
By Propositions~\ref{prop-R1}, \ref{prop-R2}, \ref{prop-R3} and \ref{prop-others}, we have 
\[
\eta_{q}(D,\mathbf{p})(l_{i})\equiv \eta_{q}(D',\mathbf{p}')(l'_{i})\pmod{A_{q}}. 
\] 
Then Lemma~\ref{lem-Magnus} implies that 
\[
E(\eta_{q}(D,\mathbf{p})(l_{i}))-E(\eta_{q}(D',\mathbf{p}')(l'_{i}))=(\mbox{terms of degree $\geq q$}).  
\] 
Hence, by definition, $\mu_{(D,\mathbf{p})}(j_{1}\ldots j_{s}i)=\mu_{(D',\mathbf{p}')}(j_{1}\ldots j_{s}i)$ for any sequence $j_{1}\ldots j_{s}i$ with $s<q$. 
\end{proof}

\begin{example}\label{ex-Milnor-number}
Consider the $3$-component virtual link diagram $D$ and its base point system $\mathbf{p}=(p_{1},p_{2},p_{3})$ in the left of Figure~\ref{counter-ex}. 
Let $a_{ij}$ be the arcs of $(D,\mathbf{p})$. 
Since $l_{1}=a_{21}, l_{2}=a_{21}^{-1}(a_{11}a_{23})$ and $ l_{3}=a_{21}^{-1}a_{22}$, by definition we have 
\[
\begin{cases}
\eta_{3}(l_{1})=\alpha_{2}, \\
\eta_{3}(l_{2})=\alpha_{2}^{-1}\alpha_{1}\alpha_{2}^{-1}\alpha_{1}^{-1}\alpha_{2}^{-1}\alpha_{1}\alpha_{2}\alpha_{1}^{-1}\alpha_{2}\alpha_{1}\alpha_{2}^{-1}\alpha_{1}^{-1}\alpha_{2}\alpha_{1}\alpha_{2}, \\ 
\eta_{3}(l_{3})=\alpha_{2}^{-1}\alpha_{1}^{-1}\alpha_{2}\alpha_{1}. 
\end{cases}
\] 
By a direct computation, we have  
\[
\begin{cases}
E(\eta_{3}(l_{1}))=1+X_{2}, \\
E(\eta_{3}(l_{2}))=1+X_{1}+\mbox{(terms of degree~$\geq3$)}, \\
E(\eta_{3}(l_{3}))=1-X_{1}X_{2}+X_{2}X_{1}+\mbox{(terms of degree~$\geq3$)}. 
\end{cases} 
\] 
Hence it follows that 
\[\mu_{(D,\mathbf{p})}(21)=1,\ \mu_{(D,\mathbf{p})}(12)=1,\ \mu_{(D,\mathbf{p})}(123)=-1 \hspace{1em} 
\mbox{and} \hspace{1em} \mu_{(D,\mathbf{p})}(213)=1,
\] 
and that $\mu_{(D,\mathbf{p})}(I)=0$ for any sequence $I$ with length~$\leq3$ except for $21,12,123$ and $213$. 

Consider another base point system $\mathbf{p}'=(p'_{1},p'_{2},p'_{3})$ of $D$ in the right of Figure~\ref{counter-ex}. 
Then we have $l_{1}=a_{22}, l_{2}=a_{21}^{-1}(a_{22}a_{11})$ and $l_{3}=a_{22}^{-1}a_{21}$, 
and hence 
\[
\eta_{3}(l_{1})=\alpha_{2},\ \eta_{3}(l_{2})=\alpha_{1} \hspace{1em} 
\mbox{and} \hspace{1em} \eta_{3}(l_{3})=1. 
\] 
This implies that 
\[
\mu_{(D,\mathbf{p}')}(21)=1 \hspace{1em} 
\mbox{and} \hspace{1em} \mu_{(D,\mathbf{p}')}(12)=1, 
\] 
and that $\mu_{(D,\mathbf{p}')}(I)=0$ for any sequence $I$ with length~$\leq3$ except for $21$ and $12$. 
Therefore, by Theorem~\ref{th-w-bar}, $(D,\mathbf{p})$ and $(D,\mathbf{p'})$ are not $\overline{\rm w}$-isotopic. 
\end{example}

\begin{figure}[htb]
  \begin{center}
    \vspace{1em}
    \begin{overpic}[width=11cm]{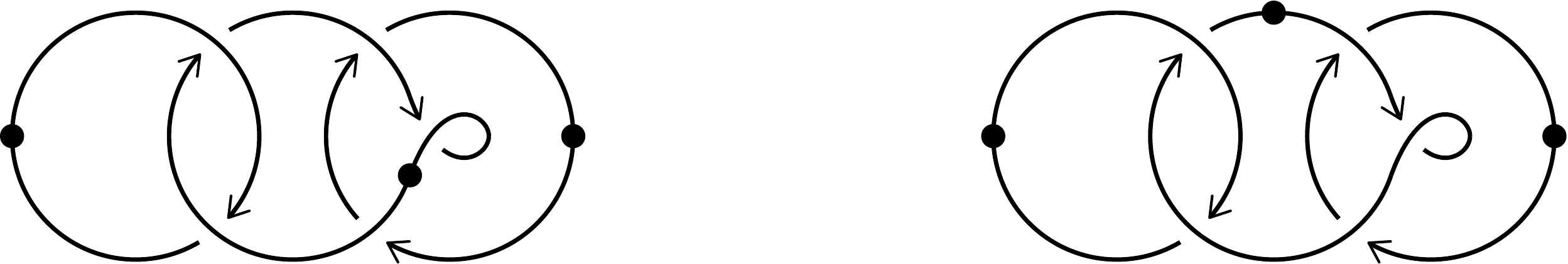}
      \put(45,-15){$(D,\mathbf{p})$}
      \put(-12,24){$p_{1}$}
      \put(86,10){$p_{2}$}
      \put(122,24){$p_{3}$}
      \put(4,53){$a_{11}$}
      \put(4,-4){$a_{12}$}
      \put(18,24){$a_{21}$}
      \put(52,55){$a_{22}$}
      \put(91,34){$a_{23}$}
      \put(102,-4){$a_{31}$}
      \put(54,8){$a_{32}$}
      \put(102,53){$a_{33}$}
      \put(239,-15){$(D,\mathbf{p}')$}
      \put(184,24){$p'_{1}$}
      \put(251,60){$p'_{2}$}
      \put(317,24){$p'_{3}$}
      \put(199,53){$a_{11}$}
      \put(199,-4){$a_{12}$}
      \put(280,38){$a_{21}$}
      \put(280,11){$a_{22}$}
      \put(234,54){$a_{23}$}
      \put(298,-4){$a_{31}$}
      \put(250,8){$a_{32}$}
      \put(298,53){$a_{33}$}
    \end{overpic}
  \end{center}
  \vspace{1em}
  \caption{A $3$-component link diagram $D$ with different base point systems $\mathbf{p}=(p_{1},p_{2},p_{3})$ and $\mathbf{p'}=(p'_{1},p'_{2},p'_{3})$}
  \label{counter-ex}
\end{figure}

\section{Change of a base point system}\label{sec-bpt-change}
In this section, we fix an $n$-component virtual link diagram $D$, and observe behavior of $\eta_{q}(l_{i})$ under a change of a base point system of $D$ (Theorem~\ref{prop-bpt-change}). 

An {\em arc} of $D$ is a segment along $D$ which goes from a classical under-crossing to the next one, where classical over-crossings and virtual crossings are ignored. 
We emphasize the definition of arcs of $D$ is slightly different from that of arcs of $(D,\mathbf{p})$. 
For each $1\leq i\leq n$, we choose one arc of the $i$th component and  denote it by~$a_{i1}$. 
Let $a_{i2},\ldots,a_{im_{i}}$ be the other arcs of the $i$th component in turn with respect to the orientation, where $m_{i}$ denotes the number of arcs of the $i$th component. 
Throughout this section, we fix these arcs $a_{i1},\ldots,a_{im_{i}}$ for $D$. 

Given a base point system $\mathbf{p}=(p_{1},\ldots,p_{n})$ of $D$, 
let $\mathbf{p}(i)$ denote the integer of the second subscript of the arc containing $p_{i}$ $(1\leq i\leq n)$. 
Consider the virtual link diagram $D$ with a base point system $\mathbf{p}=(p_{1},\ldots,p_{n})$. 
For each $i$th component of $(D,\mathbf{p})$, 
the base point $p_{i}$ divides the arc $a_{i\mathbf{p}(i)}$ of $D$ into two arcs. 
We assign the labels $b_{i}^{\mathbf{p}}$ and $a_{i\mathbf{p}(i)}$ to the two arcs of $(D,\mathbf{p})$ as shown in Figure~\ref{assign-b}. 
The labels of the other arcs of $(D,\mathbf{p})$ are the same as those of the corresponding arcs of $D$. 

\begin{figure}[htb]
  \begin{center}
    \begin{overpic}[width=11cm]{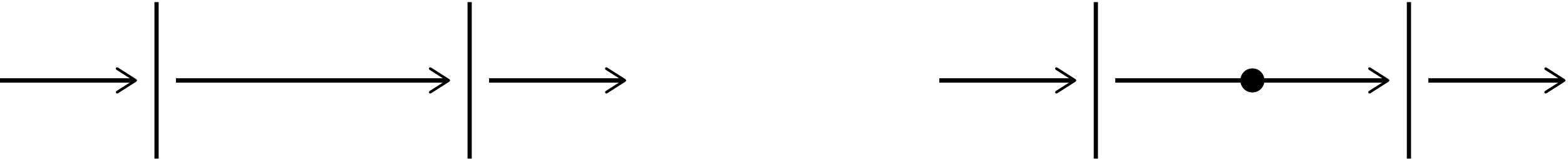}
      \put(57,-15){$D$}
      \put(-4,24){$a_{i\mathbf{p}(i)-1}$}
      \put(54,24){$a_{i\mathbf{p}(i)}$}
      \put(99,24){$a_{i\mathbf{p}(i)+1}$}
      \put(236,-15){$(D,\mathbf{p})$}
      \put(246,3){$p_{i}$}
      \put(184,24){$a_{i\mathbf{p}(i)-1}$}
      \put(233,24){$b_{i}^{\mathbf{p}}$}
      \put(255,24){$a_{i\mathbf{p}(i)}$}
      \put(286.5,24){$a_{i\mathbf{p}(i)+1}$}
    \end{overpic}
  \end{center}
  \vspace{1em}
  \caption{}
  \label{assign-b}
\end{figure}

In this setting, the homomorphism $\eta_{q}(D,\mathbf{p})$ associated with $(D,\mathbf{p})$ is described as follows. 
We put $\eta_{q}^{\mathbf{p}}=\eta_{q}(D,\mathbf{p})$ for short. 
The domain of $\eta_{q}^{\mathbf{p}}$ is the free group $\overline{A}$ on $\{a_{ij}\}\cup\{b_{i}^{\mathbf{p}}\}$. 
The homomorphism $\eta_{q}^{\mathbf{p}}$ from $\overline{A}$ into $A$ is given inductively by 
\begin{eqnarray*}
&&\eta_{1}^{\mathbf{p}}(a_{ij})=\alpha_{i},\ 
\eta_{1}^{\mathbf{p}}(b_{i}^{\mathbf{p}})=\alpha_{i}, \\
&&\eta_{q+1}^{\mathbf{p}}(a_{i\mathbf{p}(i)})=\alpha_{i},\ 
\eta_{q+1}^{\mathbf{p}}(a_{ij})=\eta_{q}^{\mathbf{p}}(({v}_{ij-1}^{\mathbf{p}})^{-1})\alpha_{i}\eta_{q}^{\mathbf{p}}(v_{ij-1}^{\mathbf{p}}) \hspace{1em} (j\neq \mathbf{p}(i)), \\
&&\mbox{and}\hspace{1em} 
\eta_{q+1}^{\mathbf{p}}(b_{i}^{\mathbf{p}})=\eta_{q}^{\mathbf{p}}((v_{i\mathbf{p}(i)-1}^{\mathbf{p}})^{-1})\alpha_{i}\eta_{q}^{\mathbf{p}}(v_{i\mathbf{p}(i)-1}^{\mathbf{p}}), 
\end{eqnarray*} 
where 
\[
v_{ij}^{\mathbf{p}}=
\begin{cases}
u_{i\mathbf{p}(i)}^{\varepsilon_{i\mathbf{p}(i)}}u_{i\mathbf{p}(i)+1}^{\varepsilon_{i\mathbf{p}(i)+1}}
\cdots u_{ij}^{\varepsilon_{ij}} & (\mathbf{p}(i)\leq j\leq m_{i}), \\[1em]
u_{i\mathbf{p}(i)}^{\varepsilon_{i\mathbf{p}(i)}}u_{i\mathbf{p}(i)+1}^{\varepsilon_{i\mathbf{p}(i)+1}}\cdots 
u_{im_{i}}^{\varepsilon_{im_i}}u_{i1}^{\varepsilon_{i1}}\cdots u_{ij}^{\varepsilon_{ij}} & (1\leq j\leq \mathbf{p}(i)-1),
\end{cases}
\] 
and $v_{i0}^{\mathbf{p}}=v_{im_i}^{\mathbf{p}}$. 
Furthermore, the $i$th preferred longitude $l_{i}^{\mathbf{p}}$ of $(D,\mathbf{p})$ is given by 
\[
l_{i}^{\mathbf{p}}=a_{i\mathbf{p}(i)}^{-w_{i}}v_{i\mathbf{p}(i)-1}^{\mathbf{p}}.
\]

We remark that the statement obtained from Lemma~\ref{lem-6k} by substituting 
 $\eta_{q}^{\mathbf{p}}$ for $\eta_{q}$ also holds.
Hereafter, even when applying the statement, we refer to Lemma~\ref{lem-6k}. 

We now define a word $\lambda_{i}^{\mathbf{p}}\in\overline{A}$ $(1\leq i\leq n)$ by 
\[
\lambda_{i}^{\mathbf{p}}=
\begin{cases}
u_{i1}^{\varepsilon_{i1}}u_{i2}^{\varepsilon_{i2}}\cdots u_{i\mathbf{p}(i)-1}^{\varepsilon_{i\mathbf{p}(i)-1}}
 & (\mathbf{p}(i)\neq 1), \\
1 & (\mathbf{p}(i)=1), 
\end{cases}
\] 
and a sequence of homomorphisms $\phi_{q}^{\mathbf{p}}:A\rightarrow A$ by 
\begin{eqnarray*}
&&\phi_{1}^{\mathbf{p}}(\alpha_{i})=\alpha_{i} \hspace{1em} \mbox{and}\\
&&\phi_{q}^{\mathbf{p}}(\alpha_{i})=\eta_{q-1}^{\mathbf{p}}(\lambda_{i}^{\mathbf{p}})\alpha_{i}\eta_{q-1}^{\mathbf{p}}(({\lambda_{i}^{\mathbf{p}}})^{-1}) 
\hspace{1em}
(q\geq2). 
\end{eqnarray*}
Notice that the homomorphism $\phi_{q}^{\mathbf{p}}$ sends each $\alpha_{i}$ to some conjugate element. 

\begin{lemma}\label{lem-phi}
For any $x,y\in A$, the following hold. 
\begin{enumerate}
\item $\phi_{q}^{\mathbf{p}}(x)\equiv\phi_{q+1}^{\mathbf{p}}(x) \pmod{A_{q}}$. 
\item If $x\equiv y\pmod{A_{q}}$, then $\phi_{q}^{\mathbf{p}}(x)\equiv\phi_{q}^{\mathbf{p}}(y) \pmod{A_{q}}$. 
\end{enumerate}
\end{lemma}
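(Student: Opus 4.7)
The plan is to reduce (1) to a check on generators and then extend using normality of $A_q$, while (2) will follow from the general fact that any endomorphism of the free group $A$ preserves the lower central series.

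For (1), the case $q=1$ is vacuous since $A_1=A$, so assume $q\geq 2$. I first verify the congruence $\phi_q^{\mathbf{p}}(\alpha_i)\equiv\phi_{q+1}^{\mathbf{p}}(\alpha_i)\pmod{A_q}$ for each generator $\alpha_i$. Applying Lemma~\ref{lem-6k}(1) letter by letter to the word $\lambda_i^{\mathbf{p}}$, and combining the letterwise congruences using normality of $A_{q-1}$, gives
\[
\eta_{q-1}^{\mathbf{p}}(\lambda_i^{\mathbf{p}})\equiv\eta_q^{\mathbf{p}}(\lambda_i^{\mathbf{p}})\pmod{A_{q-1}}.
\]
Lemma~\ref{lem-p290}(1) with $g=\alpha_i$ then upgrades the modulus by one, so the two conjugates of $\alpha_i$ appearing in the definition of $\phi_q^{\mathbf{p}}(\alpha_i)$ and $\phi_{q+1}^{\mathbf{p}}(\alpha_i)$ agree modulo $A_q$.

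To pass from the generators to an arbitrary $x\in A$, write $x$ as a product $g_1\cdots g_k$ of factors $\alpha_i^{\pm 1}$. For each factor we have $\phi_q^{\mathbf{p}}(g_j)=\phi_{q+1}^{\mathbf{p}}(g_j)\cdot n_j$ with $n_j\in A_q$ (inverses inherit the congruence by inverting). Since $A_q$ is normal in $A$, the error terms $n_j$ can be commuted past subsequent $\phi_{q+1}^{\mathbf{p}}(g_\ell)$'s without leaving $A_q$, and collecting them yields $\phi_q^{\mathbf{p}}(x)=\phi_{q+1}^{\mathbf{p}}(x)\cdot n$ for some $n\in A_q$, which is the desired congruence.

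For (2), any endomorphism of the free group $A$ sends an iterated commutator of length $q$ to another such commutator, so $\phi_q^{\mathbf{p}}(A_q)\subseteq A_q$. Hence if $x\equiv y\pmod{A_q}$, writing $y=xc$ with $c\in A_q$ gives $\phi_q^{\mathbf{p}}(y)=\phi_q^{\mathbf{p}}(x)\,\phi_q^{\mathbf{p}}(c)$ with $\phi_q^{\mathbf{p}}(c)\in A_q$, as required.

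The only (modest) technical point is the interplay between Lemma~\ref{lem-6k}(1) and Lemma~\ref{lem-p290}(1) in the proof of (1): the $A_{q-1}$-congruence of the $\eta$-images of $\lambda_i^{\mathbf{p}}$ is promoted to an $A_q$-congruence after conjugating $\alpha_i$. Everything else is routine bookkeeping with the normality and commutator properties of the lower central series.
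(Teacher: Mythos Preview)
Your proof of (1) follows the paper exactly: reduce to generators via normality of $A_q$, apply Lemma~\ref{lem-6k}(1) to get the $A_{q-1}$-congruence of $\eta_{q-1}^{\mathbf{p}}(\lambda_i^{\mathbf{p}})$ and $\eta_q^{\mathbf{p}}(\lambda_i^{\mathbf{p}})$, then use Lemma~\ref{lem-p290}(1) to promote the modulus.

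Your proof of (2) is correct but genuinely different from the paper's, and in fact simpler. You invoke the general fact that any endomorphism of a group preserves every term of the lower central series (an induction on commutator depth with the map held fixed), and apply it with the endomorphism $\phi_q^{\mathbf{p}}$ and the subgroup $A_q$. The paper instead runs an induction in which \emph{both} the map index and the subgroup index increase together: assuming $\phi_q^{\mathbf{p}}(A_q)\subset A_q$, it shows $\phi_{q+1}^{\mathbf{p}}(A_{q+1})\subset A_{q+1}$, and to bridge from $\phi_q^{\mathbf{p}}$ to $\phi_{q+1}^{\mathbf{p}}$ it invokes part~(1) together with Lemma~\ref{lem-p290}(1). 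Your decoupled argument avoids that detour entirely; the paper's route, by contrast, makes the logical dependence of (2) on (1) explicit, though that dependence is not actually needed.
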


\begin{proof}
(1)~
Since $A_{1}=A$, this holds for $q=1$. 
For $q\geq2$, 
it is enough to show the case $x=\alpha_{i}$. 
By Lemma~\ref{lem-6k}(1), we have $\eta_{q-1}^{\mathbf{p}}(\lambda_{i}^{\mathbf{p}})\equiv \eta_{q}^{\mathbf{p}}(\lambda_{i}^{\mathbf{p}}) \pmod{A_{q-1}}$. 
Hence Lemma~\ref{lem-p290}(1) implies that 
\begin{eqnarray*}
\phi_{q}^{\mathbf{p}}(\alpha_{i})&=&\eta_{q-1}^{\mathbf{p}}(\lambda_{i}^{\mathbf{p}})\alpha_{i}\eta_{q-1}^{\mathbf{p}}(({\lambda_{i}^{\mathbf{p}}})^{-1}) \\
&\equiv& \eta_{q}^{\mathbf{p}}(\lambda_{i}^{\mathbf{p}})\alpha_{i}\eta_{q}^{\mathbf{p}}(({\lambda_{i}^{\mathbf{p}}})^{-1}) \pmod{A_{q}} \\
&=& \phi_{q+1}^{\mathbf{p}}(\alpha_{i}). 
\end{eqnarray*} 

(2)~
It is enough to show that $\phi_{q}^{\mathbf{p}}(A_{q})\subset A_{q}$. 
This is done by induction on $q$. 
For $q=1$ it is obvious. 
Assume that $q\geq1$. 
For $x\in A$ and $y\in A_{q}$, by (1) and Lemma~\ref{lem-p290}(1), it follows that 
\begin{eqnarray*}
\phi_{q+1}^{\mathbf{p}}([x,y])&=&[\phi_{q+1}^{\mathbf{p}}(x),\phi_{q+1}^{\mathbf{p}}(y)] \\
&\equiv& [\phi_{q+1}^{\mathbf{p}}(x),\phi_{q}^{\mathbf{p}}(y)] \pmod{A_{q+1}}. 
\end{eqnarray*}
By the induction hypothesis, we have $\phi_{q}^{\mathbf{p}}(y)\in A_{q}$ and hence $\phi_{q+1}^{\mathbf{p}}([x,y])\in A_{q+1}$. 
\end{proof}

A {\em semi-arc} of $D$ is a segment along $D$ which goes from a classical under-/over-crossing to the next one, where virtual crossings are ignored. 
Let $\mathcal{P}$ be the set of base point systems of $D$. 
Let $\mathcal{P}_{0}\subset\mathcal{P}$ be the set of all $(p_{1},\ldots,p_{n})\in\mathcal{P}$ such that each $p_{i}$ lies on a semi-arc which starts at a classical under-crossing. 
We denote by $\mathbf{p}_{*}=(p_{1}^{*},\ldots,p_{n}^{*})\in\mathcal{P}_{0}$ the base point system such that each $p_{i}^{*}$ lies on the arc $a_{i1}$. 
For the homomorphism $\eta_{q}^{\mathbf{p}_{*}}$ associated with $(D,\mathbf{p}_{*})$, partial longitudes $v_{ij}^{\mathbf{p}_{*}}$ and preferred longitudes $l_{i}^{\mathbf{p}_{*}}$ of $(D,\mathbf{p}_{*})$, 
we simply put $\eta_{q}=\eta_{q}^{\mathbf{p}_{*}}$, $v_{ij}=v_{ij}^{\mathbf{p}_{*}}$ and $l_{i}=l_{i}^{\mathbf{p}_{*}}$. 

Let $M_{q}^{\mathbf{p}}$ be the normal closure of $\{\phi_{q}^{\mathbf{p}}([\alpha_{i},\eta_{q}(l_{i})])\mid 1\leq i\leq n\}$ in $A$ 
and let $M_{q}=\prod_{\mathbf{p}\in\mathcal{P}_{0}} M_{q}^{\mathbf{p}}$. 
Notice that $M_{q}=\prod_{\mathbf{p}\in\mathcal{P}_{0}}\phi_{q}^{\mathbf{p}}(M_{q}^{\mathbf{p}_{*}})$. 

\begin{lemma}\label{lem-subset}
For any $\mathbf{p}\in\mathcal{P}$, $M_{q}^{\mathbf{p}}\subset A_{q}M_{q+1}^{\mathbf{p}}$. 
Hence $M_{q}\subset A_{q}M_{q+1}$. 
\end{lemma}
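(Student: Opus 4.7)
The plan is to show the first assertion by taking each generator $\phi_{q}^{\mathbf{p}}([\alpha_{i},\eta_{q}(l_{i})])$ of the normal subgroup $M_{q}^{\mathbf{p}}$ and rewriting it, modulo $A_{q}$, as a generator of $M_{q+1}^{\mathbf{p}}$. Since $M_{q}^{\mathbf{p}}$ is the normal closure of these generators and $A_{q}M_{q+1}^{\mathbf{p}}$ is already normal in $A$, this will yield the inclusion $M_{q}^{\mathbf{p}}\subset A_{q}M_{q+1}^{\mathbf{p}}$.

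First I would make the elementary observation that shifting the index $q$ in $\eta_{q}(l_{i})$ costs only one degree in the commutator. Namely, write $\eta_{q+1}(l_{i})=\eta_{q}(l_{i})\cdot u$ with $u\in A_{q}$ (this is legitimate because $l_{i}$ is a product of letters $a_{ij}$, and Lemma~\ref{lem-6k}(1) gives $\eta_{q}(a_{ij})\equiv\eta_{q+1}(a_{ij})\pmod{A_{q}}$, together with the fact that $A_{q}$ is normal). A direct expansion gives
\[
[\alpha_{i},\eta_{q+1}(l_{i})]=[\alpha_{i},\eta_{q}(l_{i})]\cdot\eta_{q}(l_{i})[\alpha_{i},u]\eta_{q}(l_{i})^{-1},
\]
and since $u\in A_{q}$ the commutator $[\alpha_{i},u]$ lies in $A_{q+1}$, so $[\alpha_{i},\eta_{q}(l_{i})]\equiv[\alpha_{i},\eta_{q+1}(l_{i})]\pmod{A_{q+1}}$.

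Next I would push this congruence through $\phi_{q+1}^{\mathbf{p}}$ by invoking Lemma~\ref{lem-phi}(2), which yields
\[
\phi_{q+1}^{\mathbf{p}}([\alpha_{i},\eta_{q}(l_{i})])\equiv\phi_{q+1}^{\mathbf{p}}([\alpha_{i},\eta_{q+1}(l_{i})])\pmod{A_{q+1}},
\]
so that $\phi_{q+1}^{\mathbf{p}}([\alpha_{i},\eta_{q}(l_{i})])\in A_{q+1}M_{q+1}^{\mathbf{p}}$, since the right-hand side is by definition a generator of $M_{q+1}^{\mathbf{p}}$. Finally, Lemma~\ref{lem-phi}(1) gives
\[
\phi_{q}^{\mathbf{p}}([\alpha_{i},\eta_{q}(l_{i})])\equiv\phi_{q+1}^{\mathbf{p}}([\alpha_{i},\eta_{q}(l_{i})])\pmod{A_{q}},
\]
and combining the two congruences places each generator of $M_{q}^{\mathbf{p}}$ inside $A_{q}M_{q+1}^{\mathbf{p}}$. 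Normal closure then yields $M_{q}^{\mathbf{p}}\subset A_{q}M_{q+1}^{\mathbf{p}}$, and taking the product over $\mathbf{p}\in\mathcal{P}_{0}$ (using that $A_{q}$ is normal so it may be factored out of the product) gives $M_{q}\subset A_{q}M_{q+1}$.

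The main obstacle here is the commutator step: showing that replacing $\eta_{q}(l_{i})$ by $\eta_{q+1}(l_{i})$, which a priori costs $A_{q}$, in fact only costs $A_{q+1}$ inside the commutator $[\alpha_{i},-]$. Once this one-degree gain is established, everything else is formal bookkeeping using the two preceding lemmas. I do not expect any difficulty in passing from the result for a single $\mathbf{p}$ to the product $M_{q}=\prod_{\mathbf{p}\in\mathcal{P}_{0}}M_{q}^{\mathbf{p}}$, since normal subgroups commute in forming products of normal closures.
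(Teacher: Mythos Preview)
Your proposal is correct and follows essentially the same route as the paper: reduce to generators, use Lemma~\ref{lem-6k}(1) to compare $\eta_{q}(l_{i})$ with $\eta_{q+1}(l_{i})$, then apply both parts of Lemma~\ref{lem-phi} to land in $A_{q}M_{q+1}^{\mathbf{p}}$. The only difference is cosmetic: the paper applies Lemma~\ref{lem-phi}(2) to $\phi_{q}^{\mathbf{p}}$ first and then shifts to $\phi_{q+1}^{\mathbf{p}}$ via Lemma~\ref{lem-phi}(1), whereas you shift first and then apply (2) at level $q+1$; also, your careful commutator computation giving congruence modulo $A_{q+1}$ is stronger than needed, since everything in the argument already works modulo $A_{q}$ (the congruence $[\alpha_{i},\eta_{q}(l_{i})]\equiv[\alpha_{i},\eta_{q+1}(l_{i})]\pmod{A_{q}}$ is automatic from normality of $A_{q}$, and the final step through Lemma~\ref{lem-phi}(1) only gives $A_{q}$ anyway).
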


\begin{proof}
Lemma~\ref{lem-6k}(1) implies that 
$\eta_{q}(l_{i})\equiv\eta_{q+1}(l_{i})\pmod{A_{q}}$. 
Hence by Lemma~\ref{lem-phi}, we have 
\[
\phi_{q}^{\mathbf{p}}([\alpha_{i},\eta_{q}(l_{i})])
\equiv \phi_{q}^{\mathbf{p}}([\alpha_{i},\eta_{q+1}(l_{i})])
\equiv \phi_{q+1}^{\mathbf{p}}([\alpha_{i},\eta_{q+1}(l_{i})]) \pmod{A_{q}}. 
\] 
\end{proof}

\begin{lemma}\label{lem-diff-arc}
Let $\mathbf{p}_{0}\in\mathcal{P}_{0}$. 
For any $1\leq i\leq n$ and $1\leq j\leq m_{i}$, 
\[\eta_{q}^{\mathbf{p}_{0}}(a_{ij})\equiv\phi_{q}^{\mathbf{p}_{0}}(\eta_{q}(a_{ij})) \pmod{A_{q}M_{q}^{\mathbf{p}_{0}}}.\] 
\end{lemma}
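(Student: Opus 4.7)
The plan is to prove the lemma by induction on $q$. The base case $q=1$ is immediate, since $\eta_1^{\mathbf{p}_0}(a_{ij}) = \alpha_i = \phi_1^{\mathbf{p}_0}(\alpha_i) = \phi_1^{\mathbf{p}_0}(\eta_1(a_{ij}))$. For the inductive step, I first note that since $\eta_q^{\mathbf{p}_0}$, $\eta_q$, and $\phi_q^{\mathbf{p}_0}$ are all group homomorphisms and $A_q M_q^{\mathbf{p}_0}$ is a normal subgroup of $A$, the inductive hypothesis extends multiplicatively: for any word $w$ in the arcs of $D$,
\[
\eta_q^{\mathbf{p}_0}(w) \equiv \phi_q^{\mathbf{p}_0}(\eta_q(w)) \pmod{A_q M_q^{\mathbf{p}_0}}.
\]

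I then split into cases according to the position of $j$ relative to $\mathbf{p}_0(i)$. When $j > \mathbf{p}_0(i)$, the factorization $v_{ij-1} = \lambda_i^{\mathbf{p}_0} v_{ij-1}^{\mathbf{p}_0}$, together with the defining formula $\phi_{q+1}^{\mathbf{p}_0}(\alpha_i) = \eta_q^{\mathbf{p}_0}(\lambda_i^{\mathbf{p}_0})\, \alpha_i\, \eta_q^{\mathbf{p}_0}((\lambda_i^{\mathbf{p}_0})^{-1})$, rewrites the recursion as
\[
\eta_{q+1}^{\mathbf{p}_0}(a_{ij}) = \eta_q^{\mathbf{p}_0}(v_{ij-1}^{-1})\, \phi_{q+1}^{\mathbf{p}_0}(\alpha_i)\, \eta_q^{\mathbf{p}_0}(v_{ij-1}).
\]
Comparing with the analogous expansion of $\phi_{q+1}^{\mathbf{p}_0}(\eta_{q+1}(a_{ij}))$, the problem reduces to controlling the difference between $\eta_q^{\mathbf{p}_0}(v_{ij-1})$ and $\phi_{q+1}^{\mathbf{p}_0}(\eta_q(v_{ij-1}))$, which by the extended hypothesis combined with Lemma~\ref{lem-phi}(1) lies in $A_q M_q^{\mathbf{p}_0}$. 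The case $j < \mathbf{p}_0(i)$ is handled analogously using the wrap-around $v_{ij-1}^{\mathbf{p}_0} = (\lambda_i^{\mathbf{p}_0})^{-1} v_{im_i} v_{ij-1}$ and the identity $v_{im_i} = a_{i1}^{w_i} l_i$, where the longitude $l_i$ introduces the generators of $M_{q+1}^{\mathbf{p}_0}$ via commutation with $\alpha_i$. Finally, when $j = \mathbf{p}_0(i)$, the left side is simply $\alpha_i$ by definition, whereas the right side simplifies directly to a conjugate $c\, \alpha_i\, c^{-1}$ with $c = \phi_{q+1}^{\mathbf{p}_0}(\eta_q(\lambda_i^{\mathbf{p}_0}))^{-1}\eta_q^{\mathbf{p}_0}(\lambda_i^{\mathbf{p}_0})$, and the extended hypothesis (plus Lemma~\ref{lem-phi}(1)) gives $c \in A_q M_q^{\mathbf{p}_0}$.

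Thus in every case the residual discrepancy takes the form of a commutator $[r, g]$ with $r \in A_q M_q^{\mathbf{p}_0}$ and $g$ an appropriate middle element. Lemma~\ref{lem-subset} combined with its symmetric counterpart implies $A_q M_q^{\mathbf{p}_0} = A_q M_{q+1}^{\mathbf{p}_0}$, so I may write $r = am$ with $a \in A_q$ and $m \in M_{q+1}^{\mathbf{p}_0}$. Expanding via $[am, g] = [a, g]^m\, [m, g]$, the first factor lies in $A_{q+1}$ (since $[A_q, A] \subseteq A_{q+1}$ and $A_{q+1}$ is normal), while the second lies in $M_{q+1}^{\mathbf{p}_0}$ by normality. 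This produces the desired congruence modulo $A_{q+1} M_{q+1}^{\mathbf{p}_0}$. The main obstacle is precisely this last bookkeeping step: verifying that the upgrade of the modulus from $A_q M_q^{\mathbf{p}_0}$ (at stage $q$) to $A_{q+1} M_{q+1}^{\mathbf{p}_0}$ (at stage $q+1$) is delivered exactly by Lemma~\ref{lem-subset} at the moment the commutator with $\alpha_i$ raises the lower-central-series degree, and that the wrap-around factor $\eta_q^{\mathbf{p}_0}(v_{im_i})$ appearing in the case $j < \mathbf{p}_0(i)$ contributes, via Lemma~\ref{lem-p290}(2), a commutator that is precisely of the defining form $\phi_{q+1}^{\mathbf{p}_0}([\alpha_i, \eta_{q+1}(l_i)]) \in M_{q+1}^{\mathbf{p}_0}$.
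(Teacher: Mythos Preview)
Your proof is correct and follows essentially the same approach as the paper: induction on $q$, the same case split according to the position of $j$ relative to $\mathbf{p}_0(i)$, the same factorizations $v_{ij-1}=\lambda_i^{\mathbf{p}_0}v_{ij-1}^{\mathbf{p}_0}$ and $v_{ij-1}^{\mathbf{p}_0}=(\lambda_i^{\mathbf{p}_0})^{-1}a_{i1}^{w_i}l_iv_{ij-1}$, and the same use of Lemma~\ref{lem-subset} to upgrade the modulus together with the insertion of $\phi_{q+1}^{\mathbf{p}_0}([\alpha_i,\eta_{q+1}(l_i)])\in M_{q+1}^{\mathbf{p}_0}$ in the wrap-around case. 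The only cosmetic difference is that you package the step ``congruence modulo $A_qM_{q+1}^{\mathbf{p}_0}$ plus conjugation yields congruence modulo $A_{q+1}M_{q+1}^{\mathbf{p}_0}$'' as an explicit commutator decomposition $[am,g]$, whereas the paper simply cites Lemma~\ref{lem-p290}(2); also, for this step you only need the inclusion $M_q^{\mathbf{p}_0}\subset A_qM_{q+1}^{\mathbf{p}_0}$ of Lemma~\ref{lem-subset}, not the full equality $A_qM_q^{\mathbf{p}_0}=A_qM_{q+1}^{\mathbf{p}_0}$.
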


\begin{proof}
This is proved by induction on $q$. 
It is obvious for $q=1$. 
Assume that $q\geq1$. 
The induction hypothesis together with Lemma~\ref{lem-subset} implies that 
\begin{eqnarray}\label{eq-diff-arc}
\eta_{q}^{\mathbf{p}_{0}}(a_{ij})\equiv\phi_{q}^{\mathbf{p}_{0}}(\eta_{q}(a_{ij})) \pmod{A_{q}M_{q+1}^{\mathbf{p}_{0}}}. 
\end{eqnarray}

First we consider the case $1\leq j\leq \mathbf{p}_{0}(i)-1$. 
For partial longitudes $v_{ij-1}^{\mathbf{p}_{0}}$ and $v_{ij-1}$ of $(D,\mathbf{p}_{0})$ and $(D,\mathbf{p}_{*})$, respectively, by definition we have 
\[
v_{ij-1}^{\mathbf{p}_{0}}=(\lambda_{i}^{\mathbf{p}_{0}})^{-1}v_{im_{i}}v_{ij-1}
=(\lambda_{i}^{\mathbf{p}_{0}})^{-1}a_{i1}^{w_{i}}l_{i}v_{ij-1}, 
\] 
where $v_{i0}^{\mathbf{p}_{0}}=(\lambda_{i}^{\mathbf{p}_{0}})^{-1}a_{i1}^{w_{i}}l_{i}$. 
Then it follows from congruence~(\ref{eq-diff-arc}) and Lemma~\ref{lem-p290}(2) that 
\begin{eqnarray*}
&&\eta_{q+1}^{\mathbf{p}_{0}}(a_{ij}) \\
&&= \eta_{q}^{\mathbf{p}_{0}}((v_{ij-1}^{\mathbf{p}_{0}})^{-1})\alpha_{i}\eta_{q}^{\mathbf{p}_{0}}(v_{ij-1}^{\mathbf{p}_{0}}) \\
&&= \eta_{q}^{\mathbf{p}_{0}}(v_{ij-1}^{-1}l_{i}^{-1}a_{i1}^{-w_{i}})
\left(\eta_{q}^{\mathbf{p}_{0}}(\lambda_{i}^{\mathbf{p}_{0}})\alpha_{i}\eta_{q}^{\mathbf{p}_{0}}((\lambda_{i}^{\mathbf{p}_{0}})^{-1})\right)
\eta_{q}^{\mathbf{p}_{0}}(a_{i1}^{w_{i}}l_{i}v_{ij-1}) \\
&&= \eta_{q}^{\mathbf{p}_{0}}(v_{ij-1}^{-1}l_{i}^{-1}a_{i1}^{-w_{i}})
\phi_{q+1}^{\mathbf{p}_{0}}(\alpha_{i})
\eta_{q}^{\mathbf{p}_{0}}(a_{i1}^{w_{i}}l_{i}v_{ij-1}) \\
&&\equiv \phi_{q}^{\mathbf{p}_{0}}(\eta_{q}(v_{ij-1}^{-1}l_{i}^{-1}a_{i1}^{-w_{i}}))
\phi_{q+1}^{\mathbf{p}_{0}}(\alpha_{i})
\phi_{q}^{\mathbf{p}_{0}}(\eta_{q}(a_{i1}^{w_{i}}l_{i}v_{ij-1})) 
\pmod{A_{q+1}M_{q+1}^{\mathbf{p}_{0}}} \\
&&= \phi_{q}^{\mathbf{p}_{0}}(\eta_{q}(v_{ij-1}^{-1}l_{i}^{-1})\alpha_{i}^{-w_{i}})
\phi_{q+1}^{\mathbf{p}_{0}}(\alpha_{i})
\phi_{q}^{\mathbf{p}_{0}}(\alpha_{i}^{w_{i}}\eta_{q}(l_{i}v_{ij-1})). 
\end{eqnarray*} 
On the other hand, by Lemmas~\ref{lem-p290}(1) and \ref{lem-6k}(1) we have 
\begin{eqnarray*}
&&\phi_{q+1}^{\mathbf{p}_{0}}(\eta_{q+1}(a_{ij})) \\
&&= \phi_{q+1}^{\mathbf{p}_{0}}(\eta_{q}(v_{ij-1}^{-1})\alpha_{i}\eta_{q}(v_{ij-1})) \\
&&\equiv \phi_{q+1}^{\mathbf{p}_{0}}(\eta_{q}(v_{ij-1}^{-1})\eta_{q}(l_{i}^{-1})
[\alpha_{i},\eta_{q+1}(l_{i})]\eta_{q}(l_{i})
\alpha_{i}\eta_{q}(v_{ij-1})) 
\pmod{M_{q+1}^{\mathbf{p}_{0}}} \\
&&\equiv \phi_{q+1}^{\mathbf{p}_{0}}(\eta_{q}(v_{ij-1}^{-1})\eta_{q}(l_{i}^{-1})
[\alpha_{i},\eta_{q}(l_{i})]
\eta_{q}(l_{i})\alpha_{i}\eta_{q}(v_{ij-1})) \pmod{A_{q+1}} \\
&&= \phi_{q+1}^{\mathbf{p}_{0}}(\eta_{q}(v_{ij-1}^{-1})\eta_{q}(l_{i}^{-1})\alpha_{i}\eta_{q}(l_{i})\eta_{q}(v_{ij-1})) \\
&&= \phi_{q+1}^{\mathbf{p}_{0}}(\eta_{q}(v_{ij-1}^{-1}l_{i}^{-1})\alpha_{i}^{-w_{i}})\phi_{q+1}^{\mathbf{p}_{0}}(\alpha_{i})\phi_{q+1}^{\mathbf{p}_{0}}(\alpha_{i}^{w_{i}}\eta_{q}(l_{i}v_{ij-1})). 
\end{eqnarray*} 
Therefore Lemmas~\ref{lem-p290}(1) and~\ref{lem-phi}(1) imply that 
\[
\eta_{q+1}^{\mathbf{p}_{0}}(a_{ij})
\equiv\phi_{q+1}^{\mathbf{p}_{0}}(\eta_{q+1}(a_{ij})) \pmod{A_{q+1}M_{q+1}^{\mathbf{p}_{0}}}.
\]

Next we consider the case $\mathbf{p}_{0}(i)\leq j\leq m_{i}$. 
For $j\neq {\mathbf{p}_{0}}(i)$, we have $v_{ij-1}=\lambda_{i}^{\mathbf{p}_{0}}v_{ij-1}^{\mathbf{p}_{0}}$. 
By congruence~(\ref{eq-diff-arc}) and Lemmas~\ref{lem-p290} and \ref{lem-phi}(1), it follows that 
\begin{eqnarray*}
\eta_{q+1}^{\mathbf{p}_{0}}(a_{ij})&=& \eta_{q}^{\mathbf{p}_{0}}((v_{ij-1}^{\mathbf{p}_{0}})^{-1})\alpha_{i} \eta_{q}^{\mathbf{p}_{0}}(v_{ij-1}^{\mathbf{p}_{0}}) \\
&=& \eta_{q}^{\mathbf{p}_{0}}((v_{ij-1}^{\mathbf{p}_{0}})^{-1}(\lambda_{i}^{\mathbf{p}_{0}})^{-1})
\left(\eta_{q}^{\mathbf{p}_{0}}(\lambda_{i}^{\mathbf{p}_{0}})\alpha_{i}\eta_{q}^{\mathbf{p}_{0}}((\lambda_{i}^{\mathbf{p}_{0}})^{-1})\right)
\eta_{q}^{\mathbf{p}_{0}}(\lambda_{i}^{\mathbf{p}_{0}}v_{ij-1}^{\mathbf{p}_{0}}) \\
&=& \eta_{q}^{\mathbf{p}_{0}}((v_{ij-1}^{\mathbf{p}_{0}})^{-1}(\lambda_{i}^{\mathbf{p}_{0}})^{-1})
\phi_{q+1}^{\mathbf{p}_{0}}(\alpha_{i})
\eta_{q}^{\mathbf{p}_{0}}(\lambda_{i}^{\mathbf{p}_{0}}v_{ij-1}^{\mathbf{p}_{0}}) \\
&=& \eta_{q}^{\mathbf{p}_{0}}(v_{ij-1}^{-1})
\phi_{q+1}^{\mathbf{p}_{0}}(\alpha_{i})
\eta_{q}^{\mathbf{p}_{0}}(v_{ij-1}) \\
&\equiv& \phi_{q}^{\mathbf{p}_{0}}(\eta_{q}(v_{ij-1}^{-1}))
\phi_{q+1}^{\mathbf{p}_{0}}(\alpha_{i})
\phi_{q}^{\mathbf{p}_{0}}(\eta_{q}(v_{ij-1})) \pmod{A_{q+1}M_{q+1}^{\mathbf{p}_{0}}} \\
&\equiv& \phi_{q+1}^{\mathbf{p}_{0}}(\eta_{q}(v_{ij-1}^{-1})\alpha_{i}\eta_{q}(v_{ij-1})) \pmod{A_{q+1}} \\
&=& \phi_{q+1}^{\mathbf{p}_{0}}(\eta_{q+1}(a_{ij})).
\end{eqnarray*} 
In the case $j= {\mathbf{p}_{0}}(i)$, by substituting 1 for $v_{ij-1}^ {\mathbf{p}_{0}}$ in the formula above, we have the conclusion. 
\end{proof}

\begin{proposition}\label{prop-diff-arc}
Let $\mathbf{p}_{0}\in\mathcal{P}_{0}$. 
For any $1\leq i\leq n$, 
\[
\eta_{q}^{\mathbf{p}_{0}}(l_{i}^{\mathbf{p}_{0}})
\equiv\phi_{q}^{\mathbf{p}_{0}}(\eta_{q}((\lambda_{i}^{\mathbf{p}_{0}})^{-1}l_{i}\lambda_{i}^{\mathbf{p}_{0}})) \pmod{A_{q}M_{q}^{\mathbf{p}_{0}}}.
\] 
\end{proposition}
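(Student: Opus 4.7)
The plan is to reduce Proposition~\ref{prop-diff-arc} to Lemma~\ref{lem-diff-arc} via a word identity in $\overline{A}$, leaving one stray conjugation factor whose error is absorbed into $A_q M_q^{\mathbf{p}_0}$ by a commutator calculation.

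First I would unwind the definitions of $v_{i\mathbf{p}_0(i)-1}^{\mathbf{p}_0}$, $\lambda_i^{\mathbf{p}_0}$, and $v_{im_i}$ as concatenations of the letters $u_{ij}^{\varepsilon_{ij}}$ to obtain the identity
\[
v_{i\mathbf{p}_0(i)-1}^{\mathbf{p}_0} = (\lambda_i^{\mathbf{p}_0})^{-1} v_{im_i} \lambda_i^{\mathbf{p}_0} = (\lambda_i^{\mathbf{p}_0})^{-1} a_{i1}^{w_i} l_i \lambda_i^{\mathbf{p}_0}
\]
in $\overline{A}$, so that $l_i^{\mathbf{p}_0} = a_{i\mathbf{p}_0(i)}^{-w_i} (\lambda_i^{\mathbf{p}_0})^{-1} a_{i1}^{w_i} l_i \lambda_i^{\mathbf{p}_0}$. (The case $\mathbf{p}_0(i)=1$ collapses to $l_i^{\mathbf{p}_0}=l_i$ and is handled uniformly by what follows.) Since $A_q M_q^{\mathbf{p}_0}$ is normal, Lemma~\ref{lem-diff-arc} extends from generators to arbitrary words. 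Applying this together with $\eta_q^{\mathbf{p}_0}(a_{i\mathbf{p}_0(i)})=\alpha_i$ gives
\[
\eta_q^{\mathbf{p}_0}(l_i^{\mathbf{p}_0}) \equiv \alpha_i^{-w_i}\, D^{-1} \phi_q^{\mathbf{p}_0}(\alpha_i)^{w_i}\, \phi_q^{\mathbf{p}_0}(\eta_q(l_i))\, D \pmod{A_q M_q^{\mathbf{p}_0}},
\]
where $D := \phi_q^{\mathbf{p}_0}(\eta_q(\lambda_i^{\mathbf{p}_0}))$. The target right-hand side is $D^{-1} \phi_q^{\mathbf{p}_0}(\eta_q(l_i)) D$, so the task reduces to proving
\[
D^{-1} \phi_q^{\mathbf{p}_0}(\alpha_i)^{w_i} D \equiv \alpha_i^{w_i} \pmod{A_q M_q^{\mathbf{p}_0}}.
\]

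Set $Y := \eta_{q-1}^{\mathbf{p}_0}(\lambda_i^{\mathbf{p}_0})$, so $\phi_q^{\mathbf{p}_0}(\alpha_i)^{w_i} = Y \alpha_i^{w_i} Y^{-1}$ by definition of $\phi_q^{\mathbf{p}_0}$, whence $D^{-1} \phi_q^{\mathbf{p}_0}(\alpha_i)^{w_i} D = W \alpha_i^{w_i} W^{-1}$ with $W := D^{-1} Y$. Chaining Lemma~\ref{lem-diff-arc} at level $q-1$ (applied letter by letter to $\lambda_i^{\mathbf{p}_0}$) with Lemma~\ref{lem-6k}(1) and both parts of Lemma~\ref{lem-phi}, I would show $Y \equiv D \pmod{A_{q-1} M_{q-1}^{\mathbf{p}_0}}$, so $W \in A_{q-1} M_{q-1}^{\mathbf{p}_0} \subset A_{q-1} M_q^{\mathbf{p}_0}$ by Lemma~\ref{lem-subset}. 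Writing $W = w_1 m$ with $w_1 \in A_{q-1}$ and $m \in M_q^{\mathbf{p}_0}$ and applying the commutator identity $[w_1 m, \alpha_i^{w_i}] = w_1 [m, \alpha_i^{w_i}] w_1^{-1} \cdot [w_1, \alpha_i^{w_i}]$, the first factor lies in $M_q^{\mathbf{p}_0}$ by normality and the second in $A_q$ since $[A_{q-1}, A] \subset A_q$. Hence $[W, \alpha_i^{w_i}] \in A_q M_q^{\mathbf{p}_0}$, which is equivalent to the desired congruence.

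The main obstacle is this final reduction. One has to correctly identify that the discrepancy between the conjugating element $Y$ implicit in $\phi_q^{\mathbf{p}_0}(\alpha_i)$ and the element $D$ appearing in the target lies in $A_{q-1} M_q^{\mathbf{p}_0}$, which crucially uses the containment $M_{q-1}^{\mathbf{p}_0} \subset A_{q-1} M_q^{\mathbf{p}_0}$ from Lemma~\ref{lem-subset}, and then verify that commutators of such elements with $\alpha_i^{w_i}$ remain inside $A_q M_q^{\mathbf{p}_0}$. Everything else is essentially a bookkeeping exercise on top of Lemma~\ref{lem-diff-arc}.
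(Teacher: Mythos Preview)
Your proof is correct, and the overall strategy---establish the word identity $l_{i}^{\mathbf{p}_{0}}=a_{i\mathbf{p}_{0}(i)}^{-w_{i}}(\lambda_{i}^{\mathbf{p}_{0}})^{-1}a_{i1}^{w_{i}}l_{i}\lambda_{i}^{\mathbf{p}_{0}}$, apply Lemma~\ref{lem-diff-arc} to the letters, and then absorb the leftover discrepancy involving $\phi_{q}^{\mathbf{p}_{0}}(\alpha_{i})^{w_{i}}$---is exactly that of the paper. The only difference is in how that last discrepancy is killed. You unwind $\phi_{q}^{\mathbf{p}_{0}}(\alpha_{i})=Y\alpha_{i}Y^{-1}$ with $Y=\eta_{q-1}^{\mathbf{p}_{0}}(\lambda_{i}^{\mathbf{p}_{0}})$, show $D^{-1}Y\in A_{q-1}M_{q}^{\mathbf{p}_{0}}$ via Lemma~\ref{lem-diff-arc} at level $q-1$ together with Lemmas~\ref{lem-6k}(1), \ref{lem-phi} and~\ref{lem-subset}, and then split the commutator $[D^{-1}Y,\alpha_{i}^{w_{i}}]$ by hand. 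The paper instead passes from $\phi_{q}^{\mathbf{p}_{0}}(\alpha_{i}^{w_{i}})$ to $\phi_{q+1}^{\mathbf{p}_{0}}(\alpha_{i}^{w_{i}})$ using Lemma~\ref{lem-phi}(1), unfolds $\phi_{q+1}^{\mathbf{p}_{0}}(\alpha_{i})=\eta_{q}^{\mathbf{p}_{0}}(\lambda_{i}^{\mathbf{p}_{0}})\alpha_{i}\eta_{q}^{\mathbf{p}_{0}}((\lambda_{i}^{\mathbf{p}_{0}})^{-1})$, and then applies Lemma~\ref{lem-diff-arc} once more at level~$q$ to turn $\eta_{q}^{\mathbf{p}_{0}}(\lambda_{i}^{\mathbf{p}_{0}})$ into $D$, after which everything cancels exactly. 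The paper's route is slightly slicker (no explicit commutator decomposition and no appeal to level~$q-1$), but your detour through $A_{q-1}M_{q}^{\mathbf{p}_{0}}$ is perfectly valid and uses the same stock of lemmas.
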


\begin{proof}
Since $l_{i}^{\mathbf{p}_{0}}=a_{i\mathbf{p}_{0}(i)}^{-w_{i}}(\lambda_{i}^{\mathbf{p}_{0}})^{-1}a_{i1}^{w_{i}}l_{i}\lambda_{i}^{\mathbf{p}_{0}}$, it follows from Lemmas~\ref{lem-phi}(1) and \ref{lem-diff-arc} that 
\begin{eqnarray*}
\eta_{q}^{\mathbf{p}_{0}}(l_{i}^{\mathbf{p}_{0}})
&=&\eta_{q}^{\mathbf{p}_{0}}(a_{i\mathbf{p}_{0}(i)}^{-w_{i}}(\lambda_{i}^{\mathbf{p}_{0}})^{-1}a_{i1}^{w_{i}}l_{i}\lambda_{i}^{\mathbf{p}_{0}}) \\
&=& \alpha_{i}^{-w_{i}}\eta_{q}^{\mathbf{p}_{0}}((\lambda_{i}^{\mathbf{p}_{0}})^{-1}a_{i1}^{w_{i}}l_{i}\lambda_{i}^{\mathbf{p}_{0}}) \\
&\equiv& \alpha_{i}^{-w_{i}}\phi_{q}^{\mathbf{p}_{0}}(\eta_{q}((\lambda_{i}^{\mathbf{p}_{0}})^{-1}a_{i1}^{w_{i}}l_{i}\lambda_{i}^{\mathbf{p}_{0}})) \pmod{A_{q}M_{q}^{\mathbf{p}_{0}}} \\
&=& \alpha_{i}^{-w_{i}}\phi_{q}^{\mathbf{p}_{0}}(\eta_{q}((\lambda_{i}^{\mathbf{p}_{0}})^{-1}))
\phi_{q}^{\mathbf{p}_{0}}(\alpha_{i}^{w_{i}}) \phi_{q}^{\mathbf{p}_{0}}(\eta_{q}(l_{i}\lambda_{i}^{\mathbf{p}_{0}})) \\
&\equiv& \alpha_{i}^{-w_{i}}\phi_{q}^{\mathbf{p}_{0}}(\eta_{q}((\lambda_{i}^{\mathbf{p}_{0}})^{-1}))
\phi_{q+1}^{\mathbf{p}_{0}}(\alpha_{i}^{w_{i}}) \phi_{q}^{\mathbf{p}_{0}}(\eta_{q}(l_{i}\lambda_{i}^{\mathbf{p}_{0}})) \pmod{A_{q}} \\
&=& \alpha_{i}^{-w_{i}}\phi_{q}^{\mathbf{p}_{0}}(\eta_{q}((\lambda_{i}^{\mathbf{p}_{0}})^{-1})
\left(\eta_{q}^{\mathbf{p}_{0}}(\lambda_{i}^{\mathbf{p}_{0}})\alpha_{i}^{w_{i}}\eta_{q}^{\mathbf{p}_{0}}((\lambda_{i}^{\mathbf{p}_{0}})^{-1})\right) 
\phi_{q}^{\mathbf{p}_{0}}(\eta_{q}(l_{i}\lambda_{i}^{\mathbf{p}_{0}})) \\
&\equiv& \alpha_{i}^{-w_{i}}\phi_{q}^{\mathbf{p}_{0}}(\eta_{q}((\lambda_{i}^{\mathbf{p}_{0}})^{-1})
\phi_{q}^{\mathbf{p}_{0}}(\eta_{q}(\lambda_{i}^{\mathbf{p}_{0}})) \alpha_{i}^{w_{i}}\phi_{q}^{\mathbf{p}_{0}}(\eta_{q}((\lambda_{i}^{\mathbf{p}_{0}})^{-1})) \\
&&\times
\phi_{q}^{\mathbf{p}_{0}}(\eta_{q}(l_{i}\lambda_{i}^{\mathbf{p}_{0}})) \pmod{A_{q}M_{q}^{\mathbf{p}_{0}}} \\
&=& \phi_{q}^{\mathbf{p}_{0}}(\eta_{q}((\lambda_{i}^{\mathbf{p}_{0}})^{-1}l_{i}\lambda_{i}^{\mathbf{p}_{0}})). 
\end{eqnarray*} 
\end{proof}

\begin{lemma}\label{lem-same-ab}
Let $\mathbf{p}_{0}\in\mathcal{P}_{0}$. 
For any $1\leq i\leq n$, 
\[
\eta_{q}^{\mathbf{p}_{0}}(a_{i\mathbf{p}_{0}(i)})\equiv\eta_{q}^{\mathbf{p}_{0}}(b_{i}^{\mathbf{p}_{0}})
\pmod{A_{q}N_{q}^{\mathbf{p}_{0}}},
\] 
where $N_{q}^{\mathbf{p}_{0}}$ denotes the normal closure of $\{[\alpha_{i},\eta_{q}^{\mathbf{p}_{0}}(l_{i}^{\mathbf{p}_{0}})]\mid 1\leq i\leq n\}$ in $A$.
\end{lemma}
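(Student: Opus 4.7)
The plan is to compute both sides of the congruence directly from the definitions and reduce their difference to a commutator which visibly lies in $N_q^{\mathbf{p}_0}$.

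First, observe that by definition $\eta_q^{\mathbf{p}_0}(a_{i\mathbf{p}_0(i)})=\alpha_i$ for every $q\geq1$, so only the $b_i^{\mathbf{p}_0}$ side is nontrivial. For $q\geq2$, the recursive definition gives
\[
\eta_q^{\mathbf{p}_0}(b_i^{\mathbf{p}_0})=\eta_{q-1}^{\mathbf{p}_0}\!\bigl((v_{i\mathbf{p}_0(i)-1}^{\mathbf{p}_0})^{-1}\bigr)\,\alpha_i\,\eta_{q-1}^{\mathbf{p}_0}\!\bigl(v_{i\mathbf{p}_0(i)-1}^{\mathbf{p}_0}\bigr).
\]
I would use the relation $l_i^{\mathbf{p}_0}=a_{i\mathbf{p}_0(i)}^{-w_i}v_{i\mathbf{p}_0(i)-1}^{\mathbf{p}_0}$ together with $\eta_{q-1}^{\mathbf{p}_0}(a_{i\mathbf{p}_0(i)})=\alpha_i$ to rewrite $\eta_{q-1}^{\mathbf{p}_0}(v_{i\mathbf{p}_0(i)-1}^{\mathbf{p}_0})=\alpha_i^{w_i}\,\eta_{q-1}^{\mathbf{p}_0}(l_i^{\mathbf{p}_0})$. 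Setting $L=\eta_{q-1}^{\mathbf{p}_0}(l_i^{\mathbf{p}_0})$, the $\alpha_i^{\pm w_i}$ factors collapse and I obtain the clean formula
\[
\eta_q^{\mathbf{p}_0}(b_i^{\mathbf{p}_0})=L^{-1}\alpha_i L.
\]

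Next, multiplying by $\eta_q^{\mathbf{p}_0}(a_{i\mathbf{p}_0(i)})^{-1}=\alpha_i^{-1}$ on the right gives the commutator $[L^{-1},\alpha_i]=L^{-1}[\alpha_i,L]L$, so the difference is a conjugate of $[\alpha_i,L]$. To replace $L=\eta_{q-1}^{\mathbf{p}_0}(l_i^{\mathbf{p}_0})$ by $\eta_q^{\mathbf{p}_0}(l_i^{\mathbf{p}_0})$, I would apply Lemma~\ref{lem-6k}(1) (in its $\mathbf{p}_0$-version), which yields $L\equiv\eta_q^{\mathbf{p}_0}(l_i^{\mathbf{p}_0})\pmod{A_{q-1}}$, and then Lemma~\ref{lem-p290}(1) to conclude
\[
[\alpha_i,L]\equiv[\alpha_i,\eta_q^{\mathbf{p}_0}(l_i^{\mathbf{p}_0})]\pmod{A_q}.
\]
Since the right-hand commutator is one of the generators of $N_q^{\mathbf{p}_0}$, its conjugate $L^{-1}[\alpha_i,L]L$ lies in $A_q N_q^{\mathbf{p}_0}$, which is exactly what is required.

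Finally, the $q=1$ case is trivial, since $\eta_1^{\mathbf{p}_0}(b_i^{\mathbf{p}_0})=\alpha_i=\eta_1^{\mathbf{p}_0}(a_{i\mathbf{p}_0(i)})$. I do not anticipate any real obstacle here; the only subtlety is keeping track of the indexing shift by one in the recursive definition of $\eta_q^{\mathbf{p}_0}(b_i^{\mathbf{p}_0})$ (which uses $\eta_{q-1}^{\mathbf{p}_0}$ of a partial longitude) and then upgrading the congruence from modulo $A_{q-1}$ to modulo $A_q$ by taking a commutator, exactly in the spirit of Lemma~\ref{lem-p290}(1).
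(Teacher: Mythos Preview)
Your proof is correct and follows essentially the same approach as the paper's. The paper merely runs the computation in the opposite direction: it starts from $\alpha_i=\eta_q^{\mathbf{p}_0}(a_{i\mathbf{p}_0(i)})$, inserts a conjugate of $[\alpha_i,\eta_q^{\mathbf{p}_0}(l_i^{\mathbf{p}_0})]$ modulo $N_q^{\mathbf{p}_0}$ to obtain $\eta_q^{\mathbf{p}_0}((l_i^{\mathbf{p}_0})^{-1})\,\alpha_i\,\eta_q^{\mathbf{p}_0}(l_i^{\mathbf{p}_0})$, then downgrades $\eta_q^{\mathbf{p}_0}$ to $\eta_{q-1}^{\mathbf{p}_0}$ via Lemmas~\ref{lem-6k}(1) and~\ref{lem-p290}(1) and recognizes the result as $\eta_q^{\mathbf{p}_0}(b_i^{\mathbf{p}_0})$, using exactly the same three ingredients (the definition of $b_i^{\mathbf{p}_0}$, the relation $l_i^{\mathbf{p}_0}=a_{i\mathbf{p}_0(i)}^{-w_i}v_{i\mathbf{p}_0(i)-1}^{\mathbf{p}_0}$, and the $\eta_{q-1}\to\eta_q$ step) that you use.
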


\begin{proof}
By Lemmas~\ref{lem-p290}(1) and~\ref{lem-6k}(1), it follows that
\begin{eqnarray*}
\eta_{q}^{\mathbf{p}_{0}}(a_{i\mathbf{p}_{0}(i)})
&\equiv& \eta_{q}^{\mathbf{p}_{0}}((l_{i}^{\mathbf{\mathbf{p}_{0}}})^{-1})[\alpha_{i},\eta_{q}^{\mathbf{p}_{0}}(l_{i}^{\mathbf{p}_{0}})]\eta_{q}^{\mathbf{p}_{0}}(l_{i}^{\mathbf{p}_{0}})\eta_{q}^{\mathbf{p}_{0}}(a_{i\mathbf{p}_{0}(i)}) \pmod{N_{q}^{\mathbf{p}_{0}}} \\
&=& \eta_{q}^{\mathbf{p}_{0}}((l_{i}^{\mathbf{p}_{0}})^{-1})\alpha_{i} \eta_{q}^{\mathbf{p}_{0}}(l_{i}^{\mathbf{p}_{0}}) \\
&\equiv& \eta_{q-1}^{\mathbf{p}_{0}}((l_{i}^{\mathbf{p}_{0}})^{-1})\alpha_{i} \eta_{q-1}^{\mathbf{p}_{0}}(l_{i}^{\mathbf{p}_{0}}) \pmod{A_{q}} \\
&=& \eta_{q-1}^{\mathbf{\mathbf{p}_{0}}}((v_{i\mathbf{p}_{0}(i)-1}^{\mathbf{p}_{0}})^{-1}a_{i\mathbf{p}_{0}(i)}^{w_{i}})\alpha_{i}\eta_{q-1}^{\mathbf{\mathbf{p}_{0}}}(a_{i\mathbf{p}_{0}(i)}^{-w_{i}}v_{i\mathbf{p}_{0}(i)-1}^{\mathbf{p}_{0}}) \\
&=& \eta_{q-1}^{\mathbf{\mathbf{p}_{0}}}((v_{i\mathbf{p}_{0}(i)-1}^{\mathbf{p}_{0}})^{-1})\alpha_{i}\eta_{q-1}^{\mathbf{\mathbf{p}_{0}}}(v_{i\mathbf{p}_{0}(i)-1}^{\mathbf{p}_{0}}) 
~=~ \eta_{q}^{\mathbf{\mathbf{p}_{0}}}(b_{i}^{\mathbf{p}_{0}}). 
\end{eqnarray*} 
\end{proof}

\begin{lemma}\label{lem-same-arc}
Let $\mathbf{p}\in\mathcal{P}$, and $\mathbf{p}_{0}\in\mathcal{P}_{0}$ with $\mathbf{p}_{0}(k)=\mathbf{p}(k)~(1\leq k\leq n)$. 
For any $1\leq i\leq n$, the following hold. 
\begin{enumerate}
\item For any $1\leq j\leq m_{i}$, 
$\eta_{q}^{\mathbf{p}}(a_{ij})\equiv\eta_{q}^{\mathbf{p}_{0}}(a_{ij})\pmod{A_{q}N_{q}^{\mathbf{p}_{0}}}$.  
\item 
$\eta_{q}^{\mathbf{p}}(b_{i}^{\mathbf{p}})\equiv\eta_{q}^{\mathbf{p}_{0}}(b_{i}^{\mathbf{p}_{0}})\pmod{A_{q}N_{q}^{\mathbf{p}_{0}}}$. 
\end{enumerate}
\end{lemma}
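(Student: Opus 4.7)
The plan is to prove~(1) and~(2) simultaneously by induction on~$q$, with a trivial base case $q = 1$: every $\eta_1$-image of a letter equals the corresponding~$\alpha_i$. The critical observation to be used throughout is that, because $\mathbf{p}_0(k) = \mathbf{p}(k)$ for all~$k$, the partial longitudes $v_{ij-1}^{\mathbf{p}}$ and $v_{ij-1}^{\mathbf{p}_0}$ (and likewise $v_{i\mathbf{p}(i)-1}^{\mathbf{p}}$ and $v_{i\mathbf{p}_0(i)-1}^{\mathbf{p}_0}$) have the same cyclic length and the same signs, with one letter per under-crossing of the $i$th component.

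For the inductive step from $q$ to $q+1$, I would compare the corresponding letters position-by-position in these words. At an under-crossing whose over-arc of~$D$ is $a_{k\ell}$ with $\ell \neq \mathbf{p}(k)$, the two letters coincide as the generator~$a_{k\ell}$, and the required congruence is supplied by the inductive hypothesis~(1). At an under-crossing whose over-arc of~$D$ is the base-point-containing arc $a_{k\mathbf{p}(k)}$, each diagram records either $a_{k\mathbf{p}(k)}$ or~$b_k^{(\cdot)}$ according to whether the over-crossing lies after or before the base point; when the assignments agree, the hypothesis~(1) or~(2) applies, and when they disagree, I would bridge via $\eta_q^{\mathbf{p}}(a_{k\mathbf{p}(k)}) = \alpha_k = \eta_q^{\mathbf{p}_0}(a_{k\mathbf{p}_0(k)})$ together with Lemma~\ref{lem-same-ab} on the $\mathbf{p}_0$ side to swap $a_{k\mathbf{p}_0(k)} \leftrightarrow b_k^{\mathbf{p}_0}$ modulo $A_q N_q^{\mathbf{p}_0}$, invoking the inductive hypothesis~(2) if the $b$-generator on the $\mathbf{p}$ side also needs to be moved.

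Multiplying these letter-wise congruences (using that $A_q N_q^{\mathbf{p}_0}$ is a normal subgroup of~$A$) gives $\eta_q^{\mathbf{p}}(v_{ij-1}^{\mathbf{p}}) \equiv \eta_q^{\mathbf{p}_0}(v_{ij-1}^{\mathbf{p}_0}) \pmod{A_q N_q^{\mathbf{p}_0}}$, and then Lemma~\ref{lem-p290}(2) applied to the recursions for $\eta_{q+1}^{\mathbf{p}}(a_{ij})$ (with $j \neq \mathbf{p}(i)$) and $\eta_{q+1}^{\mathbf{p}}(b_i^{\mathbf{p}})$ promotes these to the mod $A_{q+1} N_q^{\mathbf{p}_0}$ versions of~(1) and~(2) at level~$q+1$; the case $j = \mathbf{p}(i)$ of~(1) is immediate since both sides equal~$\alpha_i$. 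Finally, to replace $N_q^{\mathbf{p}_0}$ by $N_{q+1}^{\mathbf{p}_0}$, I would use Lemma~\ref{lem-6k}(1) together with the commutator identity $[x,yz] = [x,y] \cdot y[x,z]y^{-1}$ to show that each generator $[\alpha_i, \eta_q^{\mathbf{p}_0}(l_i^{\mathbf{p}_0})]$ of $N_q^{\mathbf{p}_0}$ is congruent modulo $A_{q+1}$ to $[\alpha_i, \eta_{q+1}^{\mathbf{p}_0}(l_i^{\mathbf{p}_0})]$, whence $N_q^{\mathbf{p}_0} \subset A_{q+1} N_{q+1}^{\mathbf{p}_0}$.

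The hard part will be the bookkeeping in the case where the two diagrams disagree on which piece of the split arc~$a_{k\mathbf{p}(k)}$ serves as the over-arc: one must keep careful track of whether each over-crossing on this arc lies before or after $p_k$ versus $p_k^{0}$, and verify that the mismatch is always absorbed by $N_q^{\mathbf{p}_0}$ rather than by the $\mathbf{p}$-analogue~$N_q^{\mathbf{p}}$. This asymmetry between $\mathbf{p}$ and $\mathbf{p}_0$ in the error term is precisely what makes Lemma~\ref{lem-same-ab} --- which only applies for base point systems in~$\mathcal{P}_0$ --- the decisive ingredient.
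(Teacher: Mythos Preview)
Your proposal is correct and follows essentially the same inductive scheme as the paper, but you are working harder than necessary in the letter-by-letter comparison. The paper exploits the hypothesis $\mathbf{p}_{0}\in\mathcal{P}_{0}$ in a more direct way: since each $p_{k}^{0}$ lies on a semi-arc that \emph{starts} at an under-crossing, the segment $b_{k}^{\mathbf{p}_{0}}$ carries no over-crossings at all, so the letters $b_{1}^{\mathbf{p}_{0}},\ldots,b_{n}^{\mathbf{p}_{0}}$ never appear in any partial longitude $v_{ij-1}^{\mathbf{p}_{0}}$. Consequently the only mismatch between $v_{ij-1}^{\mathbf{p}}$ and $v_{ij-1}^{\mathbf{p}_{0}}$ is that some occurrences of $a_{k\mathbf{p}(k)}$ on the $\mathbf{p}_{0}$ side become $b_{k}^{\mathbf{p}}$ on the $\mathbf{p}$ side, and this single case is handled by the chain $\eta_{q}^{\mathbf{p}}(b_{k}^{\mathbf{p}})\equiv\eta_{q}^{\mathbf{p}_{0}}(b_{k}^{\mathbf{p}_{0}})\equiv\eta_{q}^{\mathbf{p}_{0}}(a_{k\mathbf{p}_{0}(k)})$ via hypothesis~(2) and Lemma~\ref{lem-same-ab}. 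Your four-way case split (both sides $a$, both sides $b$, and the two mixed cases) is unnecessary because two of those cases are vacuous.

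A second minor difference: the paper upgrades the modulus from $N_{q}^{\mathbf{p}_{0}}$ to $N_{q+1}^{\mathbf{p}_{0}}$ \emph{before} applying Lemma~\ref{lem-p290}(2), using only $N_{q}^{\mathbf{p}_{0}}\subset A_{q}N_{q+1}^{\mathbf{p}_{0}}$, whereas you apply Lemma~\ref{lem-p290}(2) first and then invoke the stronger inclusion $N_{q}^{\mathbf{p}_{0}}\subset A_{q+1}N_{q+1}^{\mathbf{p}_{0}}$. Both orderings are valid and your commutator-identity argument for the stronger inclusion is fine.
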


\begin{proof}
This is proved by induction on $q$. 
Since $A_{1}=A$, assertions (1) and (2) are obvious for $q=1$. 
Assume that $q\geq 1$.
 
(1)~
For $j=\mathbf{p}(i)$, we have $\eta_{q+1}^{\mathbf{p}}(a_{i\mathbf{p}(i)})=\alpha_{i}=\eta_{q+1}^{\mathbf{p}_{0}}(a_{i\mathbf{p}_{0}(i)})=\eta_{q+1}^{\mathbf{p}_{0}}(a_{i\mathbf{p}(i)})$ by definition. 
In a way similar to the proof of Lemma~\ref{lem-subset}, we have $N_{q}^{\mathbf{p}_{0}}\subset A_{q}N_{q+1}^{\mathbf{p}_{0}}$. 
Hence for $j\neq \mathbf{p}(i)$, the induction hypothesis implies that 
\[
\eta_{q}^{\mathbf{p}}(a_{ij})\equiv\eta_{q}^{\mathbf{p}_{0}}(a_{ij})\pmod{A_{q}N_{q+1}^{\mathbf{p}_{0}}} \hspace{1em} 
\mbox{and} \hspace{1em} 
\eta_{q}^{\mathbf{p}}(b_{i}^{\mathbf{p}})\equiv\eta_{q}^{\mathbf{p}_{0}}(b_{i}^{\mathbf{p}_{0}})\pmod{A_{q}N_{q+1}^{\mathbf{p}_{0}}}.
\] 
Furthermore, by Lemma~\ref{lem-same-ab} we have 
\[
\eta_{q}^{\mathbf{p}_{0}}(a_{i\mathbf{p}_{0}(i)})\equiv\eta_{q}^{\mathbf{p}_{0}}
(b_{i}^{\mathbf{p}_{0}})\pmod{A_{q}N_{q+1}^{\mathbf{p}_{0}}}.
\]
Here we note that $v_{ij-1}^{\mathbf{p}_{0}}$ does not contain the letters $b^{\mathbf{p}_{0}}_1,\ldots,b^{\mathbf{p}_{0}}_n$. 
Since $v_{ij-1}^{\mathbf{p}_{0}}$ is obtained from $v_{ij-1}^{\mathbf{p}}$ by replacing $b_{k}^{\mathbf{p}}$ with $a_{k\mathbf{p}_{0}(k)}$ $(1\leq k\leq n)$, 
we have 
\[
\eta_{q}^{\mathbf{p}}(v_{ij-1}^{\mathbf{p}})
\equiv \eta_{q}^{\mathbf{p}_{0}}(v_{ij-1}^{\mathbf{p}_{0}}) \pmod{A_{q}N_{q+1}^{\mathbf{p}_{0}}}. 
\]
Therefore by Lemma~\ref{lem-p290}(2) it follows that 
\begin{eqnarray*}
\eta_{q+1}^{\mathbf{p}}(a_{ij})
&=&\eta_{q}^{\mathbf{p}}((v_{ij-1}^{\mathbf{p}})^{-1})\alpha_{i}\eta_{q}^{\mathbf{p}}(v_{ij-1}^{\mathbf{p}}) \\
&\equiv& \eta_{q}^{\mathbf{p}_{0}}((v_{ij-1}^{\mathbf{p}_{0}})^{-1})\alpha_{i}\eta_{q}^{\mathbf{p}_{0}}(v_{ij-1}^{\mathbf{p}_{0}}) \pmod{A_{q+1}N_{q+1}^{\mathbf{p}_{0}}} \\
&=& \eta_{q+1}^{\mathbf{p}_{0}}(a_{ij}). 
\end{eqnarray*} 

(2)~
The proof is similar to that of (1).
\end{proof}

\begin{proposition}\label{prop-same-arc}
Let $\mathbf{p}\in\mathcal{P}$, and $\mathbf{p}_{0}\in\mathcal{P}_{0}$ with $\mathbf{p}_{0}(k)=\mathbf{p}(k)~(1\leq k\leq n)$.  
For any $1\leq i\leq n$, $\eta_{q}^{\mathbf{p}}(l_{i}^{\mathbf{p}})\equiv\eta_{q}^{\mathbf{p}_{0}}(l_{i}^{\mathbf{p}_{0}}) \pmod{A_{q}M_{q}^{\mathbf{p}_{0}}}$.
\end{proposition}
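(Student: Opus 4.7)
The plan is to prove the congruence in two stages: first modulo the smaller normal subgroup $A_qN_q^{\mathbf{p}_0}$ (with $N_q^{\mathbf{p}_0}$ the normal closure appearing in Lemma~\ref{lem-same-ab}), and then to upgrade to modulo $A_qM_q^{\mathbf{p}_0}$ by showing $N_q^{\mathbf{p}_0}\subset A_qM_q^{\mathbf{p}_0}$.

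For the first stage, since $\mathbf{p}_0(k)=\mathbf{p}(k)$ for every $k$, the partial longitudes $v_{i\mathbf{p}(i)-1}^{\mathbf{p}}$ and $v_{i\mathbf{p}_0(i)-1}^{\mathbf{p}_0}$ encode the same ordered sequence of over-crossings traversed along the $i$th component. They can differ letter by letter only at an over-crossing lying on some arc $a_{k\mathbf{p}(k)}$ that carries a basepoint: because $\mathbf{p}_0\in\mathcal{P}_0$ places its basepoint before every over-crossing on its arc, such a letter is always $a_{k\mathbf{p}_0(k)}$ in $v_{i\mathbf{p}_0(i)-1}^{\mathbf{p}_0}$, whereas in $v_{i\mathbf{p}(i)-1}^{\mathbf{p}}$ it may instead be $b_k^{\mathbf{p}}$. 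I apply Lemma~\ref{lem-same-arc} to identify $\eta_q^{\mathbf{p}}(a_{kl})\equiv\eta_q^{\mathbf{p}_0}(a_{kl})$ and $\eta_q^{\mathbf{p}}(b_k^{\mathbf{p}})\equiv\eta_q^{\mathbf{p}_0}(b_k^{\mathbf{p}_0})$ modulo $A_qN_q^{\mathbf{p}_0}$, and then Lemma~\ref{lem-same-ab} to bridge $\eta_q^{\mathbf{p}_0}(b_k^{\mathbf{p}_0})\equiv\eta_q^{\mathbf{p}_0}(a_{k\mathbf{p}_0(k)}) \pmod{A_qN_q^{\mathbf{p}_0}}$. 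Multiplying letter by letter, using normality of $A_qN_q^{\mathbf{p}_0}$, produces $\eta_q^{\mathbf{p}}(v_{i\mathbf{p}(i)-1}^{\mathbf{p}})\equiv \eta_q^{\mathbf{p}_0}(v_{i\mathbf{p}_0(i)-1}^{\mathbf{p}_0}) \pmod{A_qN_q^{\mathbf{p}_0}}$, and since $\eta_q^{\mathbf{p}}(a_{i\mathbf{p}(i)})=\alpha_i=\eta_q^{\mathbf{p}_0}(a_{i\mathbf{p}_0(i)})$ by definition, the same congruence passes to $l_i^{\mathbf{p}}$ and $l_i^{\mathbf{p}_0}$.

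For the second stage, it suffices to show that each generator $[\alpha_i,\eta_q^{\mathbf{p}_0}(l_i^{\mathbf{p}_0})]$ of $N_q^{\mathbf{p}_0}$ lies in $A_qM_q^{\mathbf{p}_0}$. Using Proposition~\ref{prop-diff-arc} to replace $\eta_q^{\mathbf{p}_0}(l_i^{\mathbf{p}_0})$ by $y:=\phi_q^{\mathbf{p}_0}(\eta_q((\lambda_i^{\mathbf{p}_0})^{-1}l_i\lambda_i^{\mathbf{p}_0}))$, the identity $[a,xr]=[a,x]\cdot x[a,r]x^{-1}$ combined with normality of $A_qM_q^{\mathbf{p}_0}$ reduces the task to showing $[\alpha_i,y]\in A_qM_q^{\mathbf{p}_0}$. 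Write $y=z^{-1}fz$ with $z=\phi_q^{\mathbf{p}_0}(\eta_q(\lambda_i^{\mathbf{p}_0}))$ and $f=\phi_q^{\mathbf{p}_0}(\eta_q(l_i))$, and observe that $z\equiv s:=\eta_{q-1}^{\mathbf{p}_0}(\lambda_i^{\mathbf{p}_0}) \pmod{A_{q-1}M_q^{\mathbf{p}_0}}$; this follows from Lemma~\ref{lem-diff-arc} (extended from single letters to products, since both $\eta_q^{\mathbf{p}_0}$ and $\phi_q^{\mathbf{p}_0}\circ\eta_q$ are homomorphisms and $A_qM_q^{\mathbf{p}_0}$ is normal) combined with Lemma~\ref{lem-6k}(1) applied to $\eta^{\mathbf{p}_0}$. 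Lemma~\ref{lem-p290}(2) then yields $z^{-1}fz\equiv s^{-1}fs \pmod{A_qM_q^{\mathbf{p}_0}}$, whence $[\alpha_i,y]\equiv [\alpha_i,s^{-1}fs] \pmod{A_qM_q^{\mathbf{p}_0}}$. Since $s\alpha_is^{-1}=\phi_q^{\mathbf{p}_0}(\alpha_i)$ by definition, the conjugation identity $[\alpha_i,s^{-1}fs]=s^{-1}[s\alpha_is^{-1},f]s=s^{-1}\phi_q^{\mathbf{p}_0}([\alpha_i,\eta_q(l_i)])s$ exhibits $[\alpha_i,s^{-1}fs]$ as a conjugate of the generator $\phi_q^{\mathbf{p}_0}([\alpha_i,\eta_q(l_i)])$ of $M_q^{\mathbf{p}_0}$, so it lies in $M_q^{\mathbf{p}_0}$ by normality.

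The main obstacle will be the upgrade in Stage 2: reconciling the ``bare'' occurrence of $\eta_q(\lambda_i^{\mathbf{p}_0})$ coming out of Proposition~\ref{prop-diff-arc} with the ``conjugating'' occurrence of $\eta_{q-1}^{\mathbf{p}_0}(\lambda_i^{\mathbf{p}_0})$ inside $\phi_q^{\mathbf{p}_0}(\alpha_i)=s\alpha_is^{-1}$, so that $[\alpha_i,\eta_q^{\mathbf{p}_0}(l_i^{\mathbf{p}_0})]$ can be recognized, modulo $A_qM_q^{\mathbf{p}_0}$, as a conjugate of the generator $\phi_q^{\mathbf{p}_0}([\alpha_i,\eta_q(l_i)])$ of $M_q^{\mathbf{p}_0}$. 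The tolerance of one extra power of $A$ built into Lemma~\ref{lem-p290}(2) is exactly what makes the congruence $z\equiv s \pmod{A_{q-1}M_q^{\mathbf{p}_0}}$ (rather than the sharper $\pmod{A_qM_q^{\mathbf{p}_0}}$ that one might have wanted) sufficient for the argument to close.
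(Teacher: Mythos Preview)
Your proof is correct and follows essentially the same two-stage strategy as the paper: first establish the congruence modulo $A_qN_q^{\mathbf{p}_0}$ via Lemma~\ref{lem-same-arc}, then show $N_q^{\mathbf{p}_0}\subset A_qM_q^{\mathbf{p}_0}$ by recognizing each generator $[\alpha_i,\eta_q^{\mathbf{p}_0}(l_i^{\mathbf{p}_0})]$ as a conjugate of $\phi_q^{\mathbf{p}_0}([\alpha_i,\eta_q(l_i)])$ modulo $A_qM_q^{\mathbf{p}_0}$. The only difference is a minor technical choice in Stage~2: you conjugate by $s=\eta_{q-1}^{\mathbf{p}_0}(\lambda_i^{\mathbf{p}_0})$ and use Lemma~\ref{lem-p290}(2) to absorb the $A_{q-1}$ discrepancy between $z$ and $s$, whereas the paper conjugates by $\eta_q^{\mathbf{p}_0}(\lambda_i^{\mathbf{p}_0})$ and instead uses Lemma~\ref{lem-phi}(1) (i.e.\ $\phi_q^{\mathbf{p}_0}(\alpha_i)\equiv\phi_{q+1}^{\mathbf{p}_0}(\alpha_i)\pmod{A_q}$) to rewrite $\alpha_i$ directly; both routes arrive at the same conjugate of the generator of $M_q^{\mathbf{p}_0}$.
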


\begin{proof}
By Lemma~\ref{lem-same-arc}, we have 
$\eta_{q}^{\mathbf{p}}(v_{i\mathbf{p}(i)-1}^{\mathbf{p}})\equiv\eta_{q}^{\mathbf{p}_{0}}(v_{i\mathbf{p}_{0}(i)-1}^{\mathbf{p}_{0}}) \pmod{A_{q}N_{q}^{\mathbf{p}_{0}}}$. 
This implies that 
\begin{eqnarray*}
\eta_{q}^{\mathbf{p}}(l_{i}^{\mathbf{p}})=\eta_{q}^{\mathbf{p}}(a_{i\mathbf{p}(i)}^{-w_{i}}v_{i\mathbf{p}(i)-1}^{\mathbf{p}})
\equiv \eta_{q}^{\mathbf{p}_{0}}(a_{i\mathbf{p}_{0}(i)}^{-w_{i}}v_{i\mathbf{p}_{0}(i)-1}^{\mathbf{p}_{0}}) 
=\eta_{q}^{\mathbf{p}_{0}}(l_{i}^{\mathbf{p}_{0}}) 
\pmod{A_{q}N_{q}^{\mathbf{p}_{0}}}. 
\end{eqnarray*}
Hence, it is enough to show that $N_{q}^{\mathbf{p}_{0}}\subset A_{q}M_{q}^{\mathbf{p}_{0}}$, i.e. $[\alpha_{i},\eta_{q}^{\mathbf{p}_{0}}(l_{i}^{\mathbf{p}_{0}})]\in A_{q}M_{q}^{\mathbf{p}_{0}}$. 

By Lemma~\ref{lem-phi}(1), we have 
\[
\phi_{q}^{\mathbf{p}_{0}}(\alpha_{i})\equiv\phi_{q+1}^{\mathbf{p}_{0}}(\alpha_{i})\pmod{A_{q}}.
\]  
Since $\phi_{q+1}^{\mathbf{p}_{0}}(\alpha_{i})=\eta_{q}^{\mathbf{p}_{0}}(\lambda_{i}^{\mathbf{p}_{0}})\alpha_{i}\eta_{q}^{\mathbf{p}_{0}}((\lambda_{i}^{\mathbf{p}_{0}})^{-1})$, we have 
\[
\alpha_{i}\equiv\eta_{q}^{\mathbf{p}_{0}}((\lambda_{i}^{\mathbf{p}_{0}})^{-1})\phi_{q}^{\mathbf{p}_{0}}(\alpha_{i})\eta_{q}^{\mathbf{p}_{0}}(\lambda_{i}^{\mathbf{p}_{0}}) \pmod{A_{q}}.
\] 
Furthermore, Lemma~\ref{lem-diff-arc} and Proposition~\ref{prop-diff-arc} imply that
\begin{eqnarray*}
\eta_{q}^{\mathbf{p}_{0}}(l_{i}^{\mathbf{p}_{0}})
&\equiv& \phi_{q}^{\mathbf{p}_{0}}(\eta_{q}((\lambda_{i}^{\mathbf{p}_{0}})^{-1}l_{i}\lambda_{i}^{\mathbf{p}_{0}})) \pmod{A_{q}M_{q}^{\mathbf{p}_{0}}} \\
&\equiv& 
\eta_{q}^{\mathbf{p}_{0}}((\lambda_{i}^{\mathbf{p}_{0}})^{-1})\phi_{q}^{\mathbf{p}_{0}}(\eta_{q}(l_{i}))\eta_{q}^{\mathbf{p}_{0}}(\lambda_{i}^{\mathbf{p}_{0}}) \pmod{A_{q}M_{q}^{\mathbf{p}_{0}}}. 
\end{eqnarray*}  
Therefore it follows that 
\begin{eqnarray*}
[\alpha_{i},\eta_{q}^{\mathbf{p}_{0}}(l_{i}^{\mathbf{p}_{0}})]
&=& \alpha_{i}\eta_{q}^{\mathbf{p}_{0}}(l_{i}^{\mathbf{p}_{0}})\alpha_{i}^{-1}\eta_{q}^{\mathbf{p}_{0}}((l_{i}^{\mathbf{p}_{0}})^{-1}) \\
&\equiv& \left(\eta_{q}^{\mathbf{p}_{0}}((\lambda_{i}^{\mathbf{p}_{0}})^{-1})\phi_{q}^{\mathbf{p}_{0}}(\alpha_{i})\eta_{q}^{\mathbf{p}_{0}}(\lambda_{i}^{\mathbf{p}_{0}})
\right)
\left(\eta_{q}^{\mathbf{p}_{0}}((\lambda_{i}^{\mathbf{p}_{0}})^{-1})\phi_{q}^{\mathbf{p}_{0}}(\eta_{q}(l_{i}))\eta_{q}^{\mathbf{p}_{0}}(\lambda_{i}^{\mathbf{p}_{0}})
\right) \\
&& \times \left(\eta_{q}^{\mathbf{p}_{0}}((\lambda_{i}^{\mathbf{p}_{0}})^{-1})\phi_{q}^{\mathbf{p}_{0}}(\alpha_{i}^{-1})\eta_{q}^{\mathbf{p}_{0}}(\lambda_{i}^{\mathbf{p}_{0}})
\right) \\
&&\times \left(\eta_{q}^{\mathbf{p}_{0}}((\lambda_{i}^{\mathbf{p}_{0}})^{-1})\phi_{q}^{\mathbf{p}_{0}}(\eta_{q}(l_{i}^{-1}))\eta_{q}^{\mathbf{p}_{0}}(\lambda_{i}^{\mathbf{p}_{0}})
\right) \pmod{A_{q}M_{q}^{\mathbf{p}_{0}}} \\
&=& \eta_{q}^{\mathbf{p}_{0}}((\lambda_{i}^{\mathbf{p}_{0}})^{-1})\phi_{q}^{\mathbf{p}_{0}}(
[\alpha_{i},\eta_{q}(l_{i})])
\eta_{q}^{\mathbf{p}_{0}}(\lambda_{i}^{\mathbf{p}_{0}})\in A_{q}M_{q}^{\mathbf{p}_{0}}. 
\end{eqnarray*} 
\end{proof}

Combining Propositions~\ref{prop-diff-arc} and~\ref{prop-same-arc}, the following is obtained immediately. 

\begin{theorem}\label{prop-bpt-change}
Let $\mathbf{p}\in\mathcal{P}$, and $\mathbf{p}_{0}\in\mathcal{P}_{0}$ with $\mathbf{p}_{0}(k)=\mathbf{p}(k)~(1\leq k\leq n)$. 
For any $1\leq i\leq n$, 
$\eta_{q}^{\mathbf{p}}(l_{i}^{\mathbf{p}})\equiv\phi_{q}^{\mathbf{p}_{0}}(\eta_{q}((\lambda_{i}^{\mathbf{p}_{0}})^{-1}l_{i}\lambda_{i}^{\mathbf{p}_{0}})) \pmod{A_{q}M_{q}^{\mathbf{p}_{0}}}$. 
Hence  $\eta_{q}^{\mathbf{p}}(l_{i}^{\mathbf{p}})\equiv\phi_{q}^{\mathbf{p}_{0}}(\eta_{q}((\lambda_{i}^{\mathbf{p}_{0}})^{-1}l_{i}\lambda_{i}^{\mathbf{p}_{0}})) \pmod{A_{q}M_{q}}$.
\end{theorem}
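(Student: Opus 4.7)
The plan is to simply chain the two preceding propositions and then weaken the modulus to obtain the second assertion.

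First I would recall that Proposition~\ref{prop-same-arc} gives
\[
\eta_{q}^{\mathbf{p}}(l_{i}^{\mathbf{p}})\equiv \eta_{q}^{\mathbf{p}_{0}}(l_{i}^{\mathbf{p}_{0}}) \pmod{A_{q}M_{q}^{\mathbf{p}_{0}}},
\]
while Proposition~\ref{prop-diff-arc} (applied to $\mathbf{p}_{0}\in\mathcal{P}_{0}$) gives
\[
\eta_{q}^{\mathbf{p}_{0}}(l_{i}^{\mathbf{p}_{0}})\equiv \phi_{q}^{\mathbf{p}_{0}}\bigl(\eta_{q}((\lambda_{i}^{\mathbf{p}_{0}})^{-1}l_{i}\lambda_{i}^{\mathbf{p}_{0}})\bigr) \pmod{A_{q}M_{q}^{\mathbf{p}_{0}}}.
\]
Transitivity of congruence modulo the normal subgroup $A_{q}M_{q}^{\mathbf{p}_{0}}$ then yields the first stated congruence. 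This is the entire content of the first half.

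For the second congruence, I would observe that by the very definition $M_{q}=\prod_{\mathbf{p}\in\mathcal{P}_{0}}M_{q}^{\mathbf{p}}$, so in particular $M_{q}^{\mathbf{p}_{0}}\subset M_{q}$, hence $A_{q}M_{q}^{\mathbf{p}_{0}}\subset A_{q}M_{q}$. Thus any congruence valid modulo $A_{q}M_{q}^{\mathbf{p}_{0}}$ automatically descends to a congruence modulo $A_{q}M_{q}$, giving the second assertion of the theorem.

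There is essentially no obstacle here: the hard work has been absorbed into Lemmas~\ref{lem-phi}, \ref{lem-diff-arc}, \ref{lem-same-ab}, \ref{lem-same-arc} and the two propositions that package them. The only point to be careful about is confirming that the chaining takes place in the correct quotient, i.e.\ that both input congruences use exactly the same modulus $A_{q}M_{q}^{\mathbf{p}_{0}}$ (not, say, $A_{q}N_{q}^{\mathbf{p}_{0}}$ from the intermediate Lemma~\ref{lem-same-arc}); but Proposition~\ref{prop-same-arc} has already carried out precisely that tightening of the modulus via the inclusion $N_{q}^{\mathbf{p}_{0}}\subset A_{q}M_{q}^{\mathbf{p}_{0}}$ shown inside its proof, so no further work is needed.
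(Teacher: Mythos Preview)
Your proposal is correct and matches the paper's approach exactly: the paper's proof consists of the single sentence ``Combining Propositions~\ref{prop-diff-arc} and~\ref{prop-same-arc}, the following is obtained immediately,'' which is precisely the chaining you describe, and the passage from $A_{q}M_{q}^{\mathbf{p}_{0}}$ to $A_{q}M_{q}$ is indeed just the inclusion $M_{q}^{\mathbf{p}_{0}}\subset M_{q}$.
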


\section{Milnor numbers and welded isotopy}
Let $D$ be an $n$-component virtual link diagram of a welded link $L$. 
As shown in Example~\ref{ex-Milnor-number}, the Milnor number $\mu_{(D,\mathbf{p})}(I)$ depends on the choice of a base point system $\mathbf{p}$ of $D$. 
Hence it is {\em not} an invariant of the welded link $L$. 
On the other hand, we show in this section that $\mu_{(D,\mathbf{p})}(I)$ modulo a certain indeterminacy is an invariant of~$L$ (Theorem~\ref{th-w-iso}). 

\begin{definition} 
For a sequence $i_{1}\ldots i_{r}$ of indices in $\{1,\ldots,n\}$,  
the {\em indeterminacy $\Delta_{(D,\mathbf{p})}(i_{1}\ldots i_{r})$} of $(D,\mathbf{p})$ is the greatest common divisor of all $\mu_{(D,\mathbf{p})}(j_{1}\ldots j_{s})$, where $j_{1}\ldots j_{s}$ $(2\leq s<r)$ is obtained from $i_{1}\ldots i_{r}$ by removing at least one index and permuting the remaining indices cyclicly. 
In particular, we set $\Delta_{(D,\mathbf{p})}(i_{1}i_{2})=0$. 
\end{definition}

\begin{theorem}\label{th-w-iso}
Let $D$ and $D'$ be virtual diagrams of a welded link. 
Let $\mathbf{p}$ and $\mathbf{p}'$ be base point systems of $D$ and $D'$, respectively. 
Then $\mu_{(D,\mathbf{p})}(I)\equiv\mu_{(D',\mathbf{p}')}(I)\pmod{\Delta_{(D,\mathbf{p})}(I)}$ and $\Delta_{(D,\mathbf{p})}(I)=\Delta_{(D',\mathbf{p}')}(I)$ for any sequence $I$.
\end{theorem}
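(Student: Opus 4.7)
The plan is to reduce Theorem~\ref{th-w-iso} to the case $D=D'$ with differing base point systems, apply Theorem~\ref{prop-bpt-change}, and perform a Magnus-coefficient calculation that identifies the change in $\mu_{(D,\mathbf{p})}(I)$ with an element of $\Delta_{(D,\mathbf{p})}(I)\,\mathbb{Z}$.

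First, I would observe that any welded isotopy from $D$ to $D'$ carrying $\mathbf{p}$ to $\mathbf{p}'$ can, after sliding base points out of the Reidemeister disks, be realized as a finite sequence of moves each of which is either a $\overline{\mbox{w}}$-isotopy move (a welded Reidemeister move or the local move of Figure~\ref{w-bar} performed away from all base points) or a base-change move of Figure~\ref{bpt-change}. A base point can always be freed from such a disk by sliding along its component: slides through virtual crossings are covered by Figure~\ref{w-bar}, and slides through classical crossings are the moves of Figure~\ref{bpt-change}. By Theorem~\ref{th-w-bar}, the $\overline{\mbox{w}}$-moves preserve $\mu_{(D,\mathbf{p})}(I)$ exactly. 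Hence it suffices to prove that for a single virtual link diagram $D$ and any two base point systems $\mathbf{p},\mathbf{p}'$ on $D$, one has $\mu_{(D,\mathbf{p})}(I)\equiv\mu_{(D,\mathbf{p}')}(I)\pmod{\Delta_{(D,\mathbf{p})}(I)}$ and $\Delta_{(D,\mathbf{p})}(I)=\Delta_{(D,\mathbf{p}')}(I)$.

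Second, choose $\mathbf{p}_{0},\mathbf{p}_{0}'\in\mathcal{P}_{0}$ with $\mathbf{p}_{0}(k)=\mathbf{p}(k)$ and $\mathbf{p}_{0}'(k)=\mathbf{p}'(k)$ for each $k$. Theorem~\ref{prop-bpt-change} then gives
\[\eta_{q}^{\mathbf{p}}(l_{i}^{\mathbf{p}})\equiv\phi_{q}^{\mathbf{p}_{0}}\!\left(\eta_{q}\!\left((\lambda_{i}^{\mathbf{p}_{0}})^{-1}l_{i}\lambda_{i}^{\mathbf{p}_{0}}\right)\right)\pmod{A_{q}M_{q}},\]
and an analogous congruence for $\mathbf{p}'$. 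Applying the Magnus homomorphism $E$ and extracting the coefficient of $X_{j_{1}}\cdots X_{j_{s}}$ for $I=j_{1}\ldots j_{s}i$ with $s<q$, Lemma~\ref{lem-Magnus} kills the $A_{q}$-contribution. Using the formula $E(w^{-1}xw)=E(w)^{-1}E(x)E(w)$ together with the fact that $\phi_{q}^{\mathbf{p}_{0}}$ is a conjugation on each generator, a term-by-term computation shows that the coefficient of $X_{j_{1}}\cdots X_{j_{s}}$ on the right-hand side is $\mu_{(D,\mathbf{p}_{*})}(j_{1}\ldots j_{s}i)$ plus a $\mathbb{Z}$-linear combination of Milnor numbers $\mu_{(D,\mathbf{p}_{*})}(J)$ for $J$ a proper subsequence of $j_{1}\ldots j_{s}i$ (contributed by the inner conjugation by $\lambda_{i}^{\mathbf{p}_{0}}$ and by the elementary conjugations defining $\phi_{q}^{\mathbf{p}_{0}}$), and of such subsequences after cyclic permutation of the indices (contributed by the commutators $[\alpha_{k},\eta_{q}(l_{k})]$ generating $M_{q}$ modulo $A_{q}$, via the standard Magnus expansion of a commutator). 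These are precisely the Milnor numbers whose GCD defines $\Delta_{(D,\mathbf{p}_{*})}(I)$.

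Third, combining the comparisons for $\mathbf{p}$ and for $\mathbf{p}'$ yields that $\mu_{(D,\mathbf{p})}(I)-\mu_{(D,\mathbf{p}')}(I)$ lies in the ideal of $\mathbb{Z}$ generated by $\mu_{(D,\mathbf{p}_{*})}(J)$ over proper cyclically-permuted subsequences $J$ of $I$. I then proceed by induction on the length of $I$: the base case $|I|=2$ is exact because, as one checks directly, conjugation, $\phi_{q}^{\mathbf{p}_{0}}$, and $M_{q}\subset A_{2}$ all preserve the degree-one part of the Magnus expansion; for the inductive step, the inductive hypothesis yields $\mu_{(D,\mathbf{p}_{*})}(J)\equiv\mu_{(D,\mathbf{p})}(J)\pmod{\Delta_{(D,\mathbf{p})}(J)}$ together with $\Delta_{(D,\mathbf{p})}(J)\mid\Delta_{(D,\mathbf{p})}(I)$, giving $\mu_{(D,\mathbf{p})}(I)\equiv\mu_{(D,\mathbf{p}')}(I)\pmod{\Delta_{(D,\mathbf{p})}(I)}$. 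Applying the same inductive comparison term by term in the GCD defining $\Delta$ yields $\Delta_{(D,\mathbf{p})}(I)=\Delta_{(D,\mathbf{p}')}(I)$. The main obstacle is the combinatorial Magnus bookkeeping in paragraph two: one must check, uniformly and with the correct cyclic-permutation convention, that each of the three new ingredients (the automorphism $\phi_{q}^{\mathbf{p}_{0}}$, the conjugation by $\lambda_{i}^{\mathbf{p}_{0}}$, and the normal-closure correction by $M_{q}$) contributes only integer combinations of Milnor numbers indexed by proper cyclically-permuted subsequences of $I$, exactly matching the definition of $\Delta_{(D,\mathbf{p})}(I)$.
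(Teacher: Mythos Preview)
Your proposal is correct and follows essentially the same route as the paper: reduce via Theorem~\ref{th-w-bar} to a change of base point system on a fixed diagram, invoke Theorem~\ref{prop-bpt-change}, and then control the Magnus coefficients of the resulting conjugation, the automorphism $\phi_{q}^{\mathbf{p}_{0}}$, and the $M_{q}$-correction, finishing with an induction on $|I|$ for the equality of $\Delta$. The paper packages your ``term-by-term bookkeeping'' as membership in the two-sided ideal $\mathcal{D}_{i}\subset\mathbb{Z}\langle\langle X_{1},\ldots,X_{n}\rangle\rangle$ (Sublemmas~\ref{lem-ideal}, \ref{lem-conj} and Lemma~\ref{lem-Delta}), which makes the three checks you single out (conjugation, $\phi_{q}^{\mathbf{p}_{0}}$, and $M_{q}$) uniform and closed under the products and conjugates arising from the normal closure; this is exactly the device that would streamline the combinatorial step you flag as the main obstacle.
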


This theorem guarantees the well-definedness of the following definition.

\begin{definition}
Let $L$ be an $n$-component welded link. 
For a sequence $I$ of indices in $\{1,\ldots,n\}$, 
the {\em Milnor $\omu$-invariant $\omu_{L}(I)$} of $L$ is the residue class of $\mu_{(D,\mathbf{p})}(I)$ modulo $\Delta_{(D,\mathbf{p})}(I)$ for any virtual diagram $D$ of $L$ and any base point system $\mathbf{p}$ of~$D$. 
\end{definition}

\begin{remark}
The Milnor $\omu$-invariant of welded links, defined above, coincides with the extension of Chrisman in~\cite{C} for any sequence. 
In particular, for classical links, the invariant coincides with the original one in~\cite{M57}. 
\end{remark}

In the remainder of this section, we fix $D$ and its arcs $a_{ij}$ $(1\leq i\leq n, 1\leq j\leq m_{i})$, and use the same notation as in Section~\ref{sec-bpt-change}. 
In this setting, the Milnor number $\mu_{(D,\mathbf{p})}(j_{1}\ldots j_{s}i)$ of $(D,\mathbf{p})$ is given by the coefficient of $X_{j_{1}}\cdots X_{j_{s}}$ in $E(\eta_{q}^{\mathbf{p}}(l_{i}^{\mathbf{p}}))$. 
For short, we put $\mu_{\mathbf{p}}(I)=\mu_{(D,\mathbf{p})}(I)$ and $\Delta_{\mathbf{p}}(I)=\Delta_{(D,\mathbf{p})}(I)$. 
In particular, we put $\mu(I)=\mu_{(D,\mathbf{p}_{*})}(I)$ and $\Delta(I)=\Delta_{(D,\mathbf{p}_{*})}(I)$.

For each $1\leq i\leq n$, we define a subset $\mathcal{D}_{i}$ of $\mathbb{Z}\langle\langle X_{1},\ldots,X_{n}\rangle\rangle$ to be
\[
\left\{\sum\nu(j_{1}\ldots j_{s})X_{j_{1}}\cdots X_{j_{s}}~\left|
\begin{array}{ll}
\nu(j_{1}\ldots j_{s})\equiv0\pmod{\Delta(j_{1}\ldots j_{s}i)} &(s<q), \\
\nu(j_{1}\ldots j_{s})\in\mathbb{Z} &(s\geq q).
\end{array}
\right.
\right\}.
\] 

Although the following three results, Sublemmas~\ref{lem-ideal}, \ref{lem-conj} and Lemma~\ref{lem-Delta}, 
are essentially shown in \cite{M57}, we give the proofs for the readers' convenience. 
We use Sublemmas~\ref{lem-ideal} and~\ref{lem-conj} to prove Lemma~\ref{lem-Delta}. 

\begin{sublemma}[{cf. \cite[(14) and (16)--(19) on pages 292 and 293]{M57}}]
\label{lem-ideal}
For any $1\leq i\leq n$, the following hold. 
\begin{enumerate}
\item $\mathcal{D}_{i}$ is a two-sided ideal of $\mathbb{Z}\langle\langle X_{1},\ldots,X_{n}\rangle\rangle$. 
\item Let $f_{k}=E(\eta_{q}(l_{k}))-1$ $(1\leq k\leq n)$. 
Then $X_{j}f_{i},f_{i}X_{j}\in\mathcal{D}_{i}$ and $X_{j}f_{j},f_{j}X_{j}\in\mathcal{D}_{i}$ for any $1\leq j\leq n$. 
\item Let $k_{1}\ldots k_{s+t}$ be a sequence obtained from a sequence $j_{1}\ldots j_{s}$ by inserting $t~(\geq1)$ indices in $\{1,\ldots,n\}$. 
Then $\mu(j_{1}\ldots j_{s}i)X_{k_{1}}\cdots X_{k_{s+t}}\in\mathcal{D}_{i}$. 
\item $E([\alpha_{j},\eta_{q}(l_{j})])-1\in\mathcal{D}_{i}$ for any $1\leq j\leq n$. 
\end{enumerate}
\end{sublemma}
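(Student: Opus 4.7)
The plan is to reduce everything to a single monotonicity property of the indeterminacies: for any index $k$ and any sequence $I$, the set of sequences whose $\mu$-values appear in the gcd defining $\Delta(I)$ embeds into the corresponding set for $\Delta(kI)$ and for $\Delta(Ik)$, hence $\Delta(kI)\mid\Delta(I)$ and $\Delta(Ik)\mid\Delta(I)$. With this principle in hand, parts (1)--(3) are essentially bookkeeping, and (4) is a short algebraic consequence.

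For (1), additive closure is immediate from the definition. Closure under left multiplication by $X_k$ holds because the coefficient of $X_k X_{j_1}\cdots X_{j_s}$ in the shifted series equals the old coefficient $\nu(j_1\ldots j_s)$, which is divisible by $\Delta(j_1\ldots j_s i)$ by hypothesis and hence by $\Delta(k j_1\ldots j_s i)$ by monotonicity; right multiplication is symmetric.

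For (2), expand $f_i=\sum_{s\geq 1}\sum_{j_1\ldots j_s}\mu(j_1\ldots j_s i)X_{j_1}\cdots X_{j_s}$. The coefficient of $X_j X_{j_1}\cdots X_{j_s}$ in $X_j f_i$ is $\mu(j_1\ldots j_s i)$, and since $j_1\ldots j_s i$ is obtained from $j j_1\ldots j_s i$ by deleting the leading $j$, divisibility by $\Delta(j j_1\ldots j_s i)$ holds by definition. The cases $f_i X_j$, $f_j X_j$, and $X_j f_j$ are treated the same way: for $X_j f_j$, the target coefficient $\mu(j_1\ldots j_s j)$ appears in the gcd defining $\Delta(j j_1\ldots j_s i)$ as the cyclic permutation of $j j_1\ldots j_s$ obtained after removing the index $i$. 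Part (3) then follows directly: $j_1\ldots j_s i$ is a subsequence of $k_1\ldots k_{s+t} i$ obtained by deleting the $t\geq 1$ inserted indices, so $\Delta(k_1\ldots k_{s+t} i)\mid\mu(j_1\ldots j_s i)$ by the very definition of $\Delta$.

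For (4), set $g=1+f_j=E(\eta_q(l_j))$ and compute
$$E([\alpha_j,\eta_q(l_j)])-1 = (1+X_j)\,g\,(1+X_j)^{-1}g^{-1}-1 = (X_j f_j - f_j X_j)(1+X_j)^{-1}g^{-1}.$$
By (2), both $X_j f_j$ and $f_j X_j$ lie in $\mathcal{D}_i$, so by (1) their difference multiplied on the right by the unit $(1+X_j)^{-1}g^{-1}$ remains in $\mathcal{D}_i$. The one mild obstacle is the subscript tracking in (2), in particular recognising in the $X_j f_j$ subcase that the necessary cyclic permutation brings $\mu(j_1\ldots j_s j)$ into the gcd defining $\Delta(j j_1\ldots j_s i)$; beyond this point nothing more than the monotonicity principle and the one-line commutator manipulation is needed.
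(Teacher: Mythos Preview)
Your proof is correct and follows essentially the same approach as the paper's: both use the divisibility $\Delta(kJ i)\mid\Delta(J i)$ (your ``monotonicity principle'') for part~(1), the direct observation that $\mu(j_1\ldots j_s i)$ and $\mu(j_1\ldots j_s j)$ occur in the gcd defining $\Delta(jj_1\ldots j_s i)$ for part~(2), the same subsequence argument for~(3), and the same commutator factorization $(X_jf_j-f_jX_j)\cdot(\text{unit})$ for~(4). One small quibble: your monotonicity statement is phrased for insertion of $k$ at the ends of $I$, but right multiplication by $X_k$ in part~(1) actually requires $\Delta(j_1\ldots j_s k\, i)\mid\Delta(j_1\ldots j_s i)$, i.e.\ insertion in the penultimate slot; the justification you give (that the set of gcd-contributing sequences only grows under insertion) covers this case too, so the argument goes through.
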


\begin{proof}
(1)~
Let $\nu(j_{1}\ldots j_{s}) X_{j_{1}}\cdots X_{j_{s}}\in\mathcal{D}_{i}$ and $X_{k}\in\{X_{1},\ldots,X_{n}\}$. 
For $s+1\geq q$, we have 
$\nu(j_{1}\ldots j_{s}) X_{k}X_{j_{1}}\cdots X_{j_{s}}\in\mathcal{D}_{i}$ 
by definition. 
For $s+1<q$, since 
$\Delta(j_{1}\ldots j_{s}i)$ is divisible by 
$\Delta(kj_{1}\ldots j_{s}i)$, 
$\nu(j_{1}\ldots j_{s})$ is also divisible by $\Delta(kj_{1}\ldots j_{s}i)$. 
Hence $\nu(j_{1}\ldots j_{s}) X_{k}X_{j_{1}}\cdots X_{j_{s}}\in\mathcal{D}_{i}$. 

Similarly, we have $\nu(j_{1}\ldots j_{s}) X_{j_{1}}\cdots X_{j_{s}}X_{k}\in\mathcal{D}_{i}$. 

(2)~
By definition, it follows that 
$f_{k}=E(\eta_{q}(l_{k}))-1$ has the form 
\[\sum\mu(j_{1}\ldots j_{s}k)X_{j_{1}}\cdots X_{j_{s}}.
\]
Since both $\mu(j_{1}\ldots j_{s}i)$ and $\mu(j_{1}\ldots j_{s}j)$ are divisible by $\Delta(jj_{1}\ldots j_{s}i)$, 
we have 
$\mu(j_{1}\ldots j_{s}i)X_{j}X_{j_{1}}\cdots X_{j_{s}}\in\mathcal{D}_{i}$ and 
$\mu(j_{1}\ldots j_{s}j)X_{j}X_{j_{1}}\cdots X_{j_{s}}\in\mathcal{D}_{i}$. 
Hence $X_{j}f_{i}\in\mathcal{D}_{i}$ and $X_{j}f_{j}\in\mathcal{D}_{i}$.

Similarly, we have $f_{i}X_{j}\in\mathcal{D}_{i}$ and $f_{j}X_{j}\in\mathcal{D}_{i}$. 

(3)~
Since $\mu(j_{1}\ldots j_{s}i)$ is divisible by $\Delta(k_{1}\ldots k_{s+t}i)$, 
we have the conclusion. 

(4)~
By a direct computation, 
it follows that 
\begin{eqnarray*}
E([\alpha_{j},\eta_{q}(l_{j})])-1
&=& \left(
E(\alpha_{j}\eta_{q}(l_{j}))-E(\eta_{q}(l_{j})\alpha_{j})
\right)
E(\alpha_{j}^{-1}\eta_{q}((l_{j})^{-1})) \\
&=& \left(
(1+X_{j})(1+f_{j})-(1+f_{j})(1+X_{j})
\right)
E(\alpha_{j}^{-1}\eta_{q}((l_{j})^{-1})) \\
&=& (X_{j}f_{j}-f_{j}X_{j})E(\alpha_{j}^{-1}\eta_{q}((l_{j})^{-1})). 
\end{eqnarray*}
Hence, by (1) and (2), we have 
$E([\alpha_{j},\eta_{q}(l_{j})])-1\in\mathcal{D}_{i}$.
\end{proof}

\begin{sublemma}[{cf. \cite[(12) on page 292]{M57}}]
\label{lem-conj}
Let $x\in A$ and $\mathbf{p}\in\mathcal{P}$.
If $E(x)-1\in\mathcal{D}_{i}$, then $E(\phi_{q}^{\mathbf{p}}(x))-1\in\mathcal{D}_{i}$. 
\end{sublemma}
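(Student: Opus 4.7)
The plan is to observe that $\phi_q^{\mathbf{p}}$ corresponds, via the Magnus expansion, to a continuous ring endomorphism of the power series ring, and then to verify that this endomorphism preserves $\mathcal{D}_i$ as a set. Setting $y_j = \eta_{q-1}^{\mathbf{p}}(\lambda_j^{\mathbf{p}}) \in A$, so that $\phi_q^{\mathbf{p}}(\alpha_j) = y_j \alpha_j y_j^{-1}$, I would define a ring endomorphism $\Psi$ of $\mathbb{Z}\langle\langle X_1, \ldots, X_n \rangle\rangle$ by $\Psi(X_j) = E(y_j) X_j E(y_j)^{-1}$; since this image lies in the augmentation ideal, $\Psi$ extends uniquely and continuously to the whole ring. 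A direct check gives $\Psi(1 + X_j) = E(y_j)(1+X_j) E(y_j)^{-1} = E(\phi_q^{\mathbf{p}}(\alpha_j))$, and by multiplicativity $\Psi \circ E = E \circ \phi_q^{\mathbf{p}}$ on all of $A$. The sublemma then reduces to the single claim $\Psi(\mathcal{D}_i) \subset \mathcal{D}_i$, since $E(\phi_q^{\mathbf{p}}(x)) - 1 = \Psi(E(x) - 1)$.

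To prove $\Psi(\mathcal{D}_i) \subset \mathcal{D}_i$, I would rely on two observations. First, because every monomial in the power series expansion of $\Psi(X_{j_l}) = E(y_{j_l}) X_{j_l} E(y_{j_l})^{-1}$ contains $X_{j_l}$, every monomial $X_{k_1} \cdots X_{k_t}$ appearing in $\Psi(X_{j_1}) \cdots \Psi(X_{j_s})$ contains $X_{j_1}, \ldots, X_{j_s}$ as an ordered (not necessarily consecutive) subsequence; in particular $s \leq t$. Second, whenever $j_1 \ldots j_s$ is a subsequence of $k_1 \ldots k_t$, one has $\Delta(k_1 \ldots k_t i) \mid \Delta(j_1 \ldots j_s i)$, because every sequence that contributes to the gcd defining the former (removal of at least one index from $j_1 \ldots j_s i$ followed by cyclic permutation) also contributes to the gcd defining the latter.

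Combining these, for $F = \sum c_{j_1 \ldots j_s} X_{j_1} \cdots X_{j_s} \in \mathcal{D}_i$ and $t < q$, the coefficient of $X_{k_1} \cdots X_{k_t}$ in $\Psi(F)$ is a finite integer combination of those $c_{j_1 \ldots j_s}$ for which $j_1 \ldots j_s$ is a subsequence of $k_1 \ldots k_t$, with $s \leq t < q$; each such $c_{j_1 \ldots j_s}$ is divisible by $\Delta(j_1 \ldots j_s i)$, hence by $\Delta(k_1 \ldots k_t i)$. For $t \geq q$ there is nothing to verify, so $\Psi(F) \in \mathcal{D}_i$ as required.

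The main obstacle I anticipate is the combinatorial bookkeeping in the third paragraph: tracking the subsequence structure through the power series expansion and matching it carefully against the gcd structure defining $\Delta$. Once these are in place, the reduction to a ring-endomorphism statement closes out the argument.
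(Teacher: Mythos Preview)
Your proposal is correct and is essentially the same argument as the paper's: the paper also replaces each $X_j$ by $E(\phi_q^{\mathbf{p}}(\alpha_j))-1 = X_j + X_j\overline{h_j} + h_jX_j + h_jX_j\overline{h_j}$ (which is exactly your $\Psi(X_j)=E(y_j)X_jE(y_j)^{-1}$), observes that every resulting monomial contains $j_1\ldots j_s$ as a subsequence, and uses the divisibility $\Delta(k_1\ldots k_t i)\mid\Delta(j_1\ldots j_s i)$. One small slip: in your justification of that divisibility the roles of ``former'' and ``latter'' are swapped---it is the sequences defining $\Delta(j_1\ldots j_s i)$ that form a subset of those defining $\Delta(k_1\ldots k_t i)$, not the other way around---but the divisibility direction you state and use is correct.
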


\begin{proof}
Put $E(x)-1=\sum\nu(j_{1}\ldots j_{s})X_{j_{1}}\cdots X_{j_{s}}\in\mathcal{D}_{i}$, $E(\eta_{q-1}^{\mathbf{p}}(\lambda_{j}^{\mathbf{p}}))=1+h_{j}$ and $E(\eta_{q-1}^{\mathbf{p}}((\lambda_{j}^{\mathbf{p}})^{-1}))=1+\overline{h_{j}}$. 
Then we have 
\begin{eqnarray*}
E(\phi_{q}^{\mathbf{p}}(\alpha_{j}))-1
=X_{j}+X_{j}\overline{h_{j}}+h_{j}X_{j}+h_{j}X_{j}\overline{h_{j}}. 
\end{eqnarray*}
This shows that 
$E(\phi_{q}^{\mathbf{p}}(x))-1$ is obtained from $E(x)-1$ by replacing $X_{j}$ with $X_j+X_{j}\overline{h_{j}}+h_{j}X_{j}+h_{j}X_{j}\overline{h_{j}}$. 
Therefore 
$E(\phi_{q}^{\mathbf{p}}(x))-1$
can be written in the form 
\begin{eqnarray*}
\sum\nu(j_{1}\ldots j_{s})\left(X_{j_{1}}\cdots X_{j_{s}}
+\sum\kappa(k_{1}\ldots k_{s+t})X_{k_{1}}\cdots X_{k_{s+t}}
\right), 
\end{eqnarray*}
where the sequences $k_{1}\ldots k_{s+t}$ are obtained from the sequence $j_{1}\ldots j_{s}$ by inserting $t~(\geq1)$ indices in $\{1,\ldots,n\}$. 
Since $\Delta(j_{1}\ldots j_{s}i)$ is divisible by $\Delta(k_{1}\ldots k_{s+t}i)$, 
$\nu(j_{1}\ldots j_{s})$ is also divisible by $\Delta(k_{1}\ldots k_{s+t}i)$. 
Hence $\nu(j_{1}\ldots j_{s})X_{k_{1}}\cdots X_{k_{s+t}}\in\mathcal{D}_{i}$. 
This implies that $E(\phi_{q}^{\mathbf{p}}(x))-1\in\mathcal{D}_{i}$. 
\end{proof}

\begin{lemma}[{cf.~\cite[(12)--(15) on page 292]{M57}}]
\label{lem-Delta}
Let $x,y\in A$ and $\mathbf{p}\in\mathcal{P}$. 
For any $1\leq i\leq n$, the following hold. 
\begin{enumerate}
\item $E(x^{-1}\eta_{q}(l_{i})x)-E(\eta_{q}(l_{i}))\in\mathcal{D}_{i}$. 
\item $E(\phi_{q}^{\mathbf{p}}(\eta_{q}(l_{i})))-E(\eta_{q}(l_{i}))\in\mathcal{D}_{i}$. 
\item If $x\equiv y\pmod{A_{q}M_{q}}$, then $E(x)-E(y)\in\mathcal{D}_{i}$. 
\end{enumerate}
\end{lemma}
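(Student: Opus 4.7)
The plan is to reduce each of the three parts to showing a specific element of $\mathbb{Z}\langle\langle X_{1},\ldots,X_{n}\rangle\rangle$ lies in the two-sided ideal~$\mathcal{D}_{i}$, leveraging Sublemmas~\ref{lem-ideal} and~\ref{lem-conj}.

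For~(1), set $f_{i}=E(\eta_{q}(l_{i}))-1$ and use the algebraic identity
\[
E(x^{-1}\eta_{q}(l_{i})x)-E(\eta_{q}(l_{i}))
=E(x)^{-1}\bigl(f_{i}E(x)-E(x)f_{i}\bigr),
\]
noting that the constant term of $E(x)$ cancels inside the bracket. Expanding $E(x)$ as a power series in the $X_{j}$'s and applying Sublemma~\ref{lem-ideal}(1),(2) puts every summand of the form $f_{i}X_{I}-X_{I}f_{i}$ (with $I\ne\emptyset$) in $\mathcal{D}_{i}$, after which multiplication by $E(x)^{-1}$ preserves membership.

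For~(2), I would introduce the continuous ring endomorphism $\psi$ of $\mathbb{Z}\langle\langle X_{1},\ldots,X_{n}\rangle\rangle$ defined by $\psi(X_{j})=E(\phi_{q}^{\mathbf{p}}(\alpha_{j}))-1=(1+e_{j})X_{j}(1+e_{j})^{-1}$, with $e_{j}=E(\eta_{q-1}^{\mathbf{p}}(\lambda_{j}^{\mathbf{p}}))-1$. Checking on generators gives $E\circ\phi_{q}^{\mathbf{p}}=\psi\circ E$ on~$A$, so the goal reduces to $\psi(f_{i})-f_{i}\in\mathcal{D}_{i}$. The crucial observation is that $\psi(X_{j})-X_{j}$ retains a central $X_{j}$ in every monomial; consequently the telescoping expansion of $\psi(X_{j_{1}}\cdots X_{j_{s}})-X_{j_{1}}\cdots X_{j_{s}}$ produces only monomials in which $j_{1},\ldots,j_{s}$ survive as an ordered subsequence with at least one inserted index. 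For $s<q$ this places the contribution $\mu(j_{1}\ldots j_{s}i)X_{k_{1}}\cdots X_{k_{t}}$ into $\mathcal{D}_{i}$ by Sublemma~\ref{lem-ideal}(3), and for $s\geq q$ the resulting monomials already have total degree $\geq q$, hence lie in $\mathcal{D}_{i}$ by definition.

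For~(3), write $x=my$ with $m\in A_{q}M_{q}$, so $E(x)-E(y)=(E(m)-1)E(y)$; by the ideal property it suffices to show $E(m)-1\in\mathcal{D}_{i}$. Decomposing $m=ab$ with $a\in A_{q}$ and $b\in M_{q}$ (possible by normality of both subgroups) and using $E(ab)-1=(E(a)-1)E(b)+(E(b)-1)$ reduces to two cases. For $a\in A_{q}$, Lemma~\ref{lem-Magnus} gives $E(a)-1$ supported in degree $\geq q$, which is in $\mathcal{D}_{i}$ by definition. For $b\in M_{q}$, iterating the same identity reduces to the normal-closure generators $z\,\phi_{q}^{\mathbf{p}}([\alpha_{j},\eta_{q}(l_{j})])\,z^{-1}$: Sublemma~\ref{lem-ideal}(4) yields $E([\alpha_{j},\eta_{q}(l_{j})])-1\in\mathcal{D}_{i}$, Sublemma~\ref{lem-conj} transports this through $\phi_{q}^{\mathbf{p}}$, and the outer conjugation by $z$ is absorbed by the two-sided ideal property. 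The principal obstacle is~(2), since $\phi_{q}^{\mathbf{p}}$ acts as a "letterwise conjugation" rather than a single inner automorphism and~(1) cannot be invoked directly; the resolution is the combinatorial matching between $\psi(X_{j})-X_{j}$ and the insertion hypothesis of Sublemma~\ref{lem-ideal}(3).
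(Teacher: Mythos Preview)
Your proposal is correct and follows essentially the same approach as the paper. In~(1) the paper writes the same commutator identity $E(x^{-1})(f_{i}h-hf_{i})$ and invokes Sublemma~\ref{lem-ideal}(1),(2); in~(2) the paper appeals directly to the substitution computation from the proof of Sublemma~\ref{lem-conj}, which is exactly your endomorphism~$\psi$ and telescoping argument made explicit, and then applies Sublemma~\ref{lem-ideal}(3); in~(3) the paper likewise reduces to $E(m)-1\in\mathcal{D}_{i}$ for $m\in A_{q}M_{q}$, treats $A_{q}$ via Lemma~\ref{lem-Magnus}, and treats $M_{q}$ by checking the generators $\phi_{q}^{\mathbf{p}_{0}}([\alpha_{j},\eta_{q}(l_{j})])$ using Sublemmas~\ref{lem-ideal}(4) and~\ref{lem-conj}. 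Your version is a bit more explicit about the product decomposition and the outer conjugation, but the structure is the same.
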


\begin{proof}
(1)~
Put $E(\eta_{q}(l_{i}))=1+f_{i}$ and $E(x)=1+h$. 
Then by Sublemma~\ref{lem-ideal}(1),~(2), we have  
\begin{eqnarray*}
E(x^{-1}\eta_{q}(l_{i})x)-E(\eta_{q}(l_{i}))
&=& E(x^{-1})\left(E(\eta_{q}(l_{i})x)-E(x\eta_{q}(l_{i}))\right) \\
&=& E(x^{-1})\left((1+f_{i})(1+h)-(1+h)(1+f_{i})\right) \\
&=& E(x^{-1})(f_{i}h-hf_{i})\in\mathcal{D}_{i}. 
\end{eqnarray*}

(2)~
Since $E(\eta_{q}(l_{i}))-1=\sum\mu(j_{1}\ldots j_{s}i)X_{j_{1}}\cdots X_{j_{s}}$, 
it follows from the proof of Sublemma~\ref{lem-conj} that 
\begin{eqnarray*}
E(\phi_{q}^{\mathbf{p}}(\eta_{q}(l_{i})))-E(\eta_{q}(l_{i})) 
=\sum\mu(j_{1}\ldots j_{s}i)\left(\sum \kappa(k_{1}\ldots k_{s+t})X_{k_{1}}\cdots X_{k_{s+t}}\right), 
\end{eqnarray*}
where the sequences $k_{1}\ldots k_{s+t}$ are obtained from the sequence $j_{1}\ldots j_{s}$ by inserting $t~(\geq1)$ indices in $\{1,\ldots,n\}$. 
By Sublemma~\ref{lem-ideal}(3), 
we have
$\mu(j_{1}\ldots j_{s}i)X_{k_{1}}\cdots X_{k_{s+t}}\in\mathcal{D}_{i}$, 
and hence $E(\phi_{q}^{\mathbf{p}}(\eta_{q}(l_{i})))-E(\eta_{q}(l_{i}))\in\mathcal{D}_{i}$. 

(3)~
It is enough to show that if $x\in A_{q}M_{q}$, then we have $E(x)-1\in\mathcal{D}_{i}$. 
For $x\in A_{q}$, it is true by Lemma~\ref{lem-Magnus}. 
For $x\in M_{q}$, we only need to consider the case $x=\phi_{q}^{\mathbf{p}_{0}}([\alpha_{j},\eta_{q}(l_{j})])$ $(\mathbf{p}_{0}\in\mathcal{P}_{0}, 1\leq j\leq n)$. 
By Sublemmas~\ref{lem-ideal}(4) and \ref{lem-conj}, 
we have $E(\phi_{q}^{\mathbf{p}_{0}}([\alpha_{j},\eta_{q}(l_{j})]))-1\in\mathcal{D}_{i}$. 
\end{proof}

\begin{proposition}\label{prop-Delta}
For any $\mathbf{p}\in\mathcal{P}$, the following hold. 
\samepage
\begin{enumerate}
\item $\mu_{\mathbf{p}}(I)\equiv\mu(I) \pmod{\Delta(I)}$ for any sequence $I$. 
\item $\Delta_{\mathbf{p}}(I)=\Delta(I)$ for any sequence $I$.
\end{enumerate}
\end{proposition}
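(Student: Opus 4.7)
The plan is to deduce (1) directly from Theorem~\ref{prop-bpt-change}, Sublemma~\ref{lem-ideal}, and Lemma~\ref{lem-Delta}, and then bootstrap (2) from (1) by an induction on $|I|$.

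For (1), I will first choose $\mathbf{p}_0\in\mathcal{P}_0$ with $\mathbf{p}_0(k)=\mathbf{p}(k)$ for every $k$, so that Theorem~\ref{prop-bpt-change} gives
\[
\eta_q^{\mathbf{p}}(l_i^{\mathbf{p}})\equiv\phi_q^{\mathbf{p}_0}(\eta_q((\lambda_i^{\mathbf{p}_0})^{-1}l_i\lambda_i^{\mathbf{p}_0}))\pmod{A_qM_q}.
\]
Lemma~\ref{lem-Delta}(3) promotes this to a congruence of Magnus expansions modulo $\mathcal{D}_i$. I will then rewrite the right-hand side as a conjugate of $\phi_q^{\mathbf{p}_0}(\eta_q(l_i))$, use Lemma~\ref{lem-Delta}(2) to replace $\phi_q^{\mathbf{p}_0}(\eta_q(l_i))$ by $\eta_q(l_i)$ modulo $\mathcal{D}_i$ (the error is absorbed after conjugation because $\mathcal{D}_i$ is a two-sided ideal by Sublemma~\ref{lem-ideal}(1)), and finally invoke the conjugation-invariance argument of Lemma~\ref{lem-Delta}(1) — which only uses that $X_jf_i,f_iX_j\in\mathcal{D}_i$ (Sublemma~\ref{lem-ideal}(2)) — to strip off the remaining conjugation. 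This produces $E(\eta_q^{\mathbf{p}}(l_i^{\mathbf{p}}))\equiv E(\eta_q(l_i))\pmod{\mathcal{D}_i}$, and reading off the coefficient of $X_{j_1}\cdots X_{j_s}$ yields $\mu_{\mathbf{p}}(j_1\ldots j_si)\equiv\mu(j_1\ldots j_si)\pmod{\Delta(j_1\ldots j_si)}$, which is~(1).

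For (2), I will induct on $r=|I|$. The base case $r=2$ is immediate since both indeterminacies vanish by convention. For $r\geq 3$, a preliminary observation is that $\Delta(I)\mid\Delta(J)$ and $\Delta_{\mathbf{p}}(I)\mid\Delta_{\mathbf{p}}(J)$ for every proper sub-sequence $J$ of $I$; this is because every sub-sequence of $J$ is itself a sub-sequence of $I$, so all Milnor numbers appearing in the gcd defining $\Delta(J)$ already appear in the gcd defining $\Delta(I)$. Combining (1) with this gives, for every proper sub-sequence $J$, $\mu_{\mathbf{p}}(J)-\mu(J)\in\Delta(J)\mathbb{Z}\subseteq\Delta(I)\mathbb{Z}$, whence $\Delta(I)\mid\mu_{\mathbf{p}}(J)$ for all such $J$ and therefore $\Delta(I)\mid\Delta_{\mathbf{p}}(I)$. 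Conversely, by the induction hypothesis $\Delta_{\mathbf{p}}(J)=\Delta(J)$, so $\Delta_{\mathbf{p}}(I)\mid\Delta(J)$; hence (1) gives $\mu_{\mathbf{p}}(J)-\mu(J)\in\Delta_{\mathbf{p}}(I)\mathbb{Z}$, and combined with $\Delta_{\mathbf{p}}(I)\mid\mu_{\mathbf{p}}(J)$ this forces $\Delta_{\mathbf{p}}(I)\mid\mu(J)$ for every such $J$, whence $\Delta_{\mathbf{p}}(I)\mid\Delta(I)$. The two divisibilities give $\Delta_{\mathbf{p}}(I)=\Delta(I)$.

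I expect the main obstacle to lie in the conjugation-absorption step of (1): Lemma~\ref{lem-Delta}(1) is stated only for $\eta_q(l_i)$, but we need its analogue after first applying $\phi_q^{\mathbf{p}_0}$ and then conjugating by $E(\phi_q^{\mathbf{p}_0}(\eta_q(\lambda_i^{\mathbf{p}_0})))$. The argument is nevertheless purely formal: it only uses that $\mathcal{D}_i$ is a two-sided ideal and that $X_jf_i,f_iX_j\in\mathcal{D}_i$ for $f_i=E(\eta_q(l_i))-1$, and both properties survive the transfer.
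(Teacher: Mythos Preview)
Your proof is correct and follows essentially the same approach as the paper: for (1), the paper also combines Theorem~\ref{prop-bpt-change} with Lemma~\ref{lem-Delta}(1)--(3) in the same way (only the order in which (1) and (2) are applied is swapped), and for (2), the paper runs the same induction on $|I|$, phrased as a chain of gcd equalities rather than two divisibilities. The ``obstacle'' you anticipate does not actually arise: in your own ordering, by the time you invoke Lemma~\ref{lem-Delta}(1) you are conjugating $\eta_q(l_i)$ itself (since $\phi_q^{\mathbf{p}_0}$ has already been removed via Lemma~\ref{lem-Delta}(2) and the ideal property), so the lemma applies verbatim.
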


\begin{proof}
(1)~
For any $1\leq i\leq n$, 
it is enough to show that 
\[
E(\eta_{q}^{\mathbf{p}}(l_{i}^{\mathbf{p}}))-E(\eta_{q}(l_{i}))\equiv0\pmod{\mathcal{D}_{i}}. 
\]  
Let $\mathbf{p}_{0}\in\mathcal{P}_{0}$ with $\mathbf{p}_{0}(i)=\mathbf{p}(i)$. 
By Theorem~\ref{prop-bpt-change}, we have 
\[
\eta_{q}^{\mathbf{p}}(l_{i}^{\mathbf{p}})
\equiv\phi_{q}^{\mathbf{p}_{0}}(\eta_{q}((\lambda_{i}^{\mathbf{p}_{0}})^{-1}l_{i}\lambda_{i}^{\mathbf{p}_{0}})) \pmod{A_{q}M_{q}}. 
\]
Put $x=\phi_{q}^{\mathbf{p}_{0}}(\eta_{q}(\lambda_{i}^{\mathbf{p}_{0}}))\in A$. 
Then by Lemma~\ref{lem-Delta} it follows that 
\begin{eqnarray*}
E(\eta_{q}^{\mathbf{p}}(l_{i}^{\mathbf{p}}))-E(\eta_{q}(l_{i})) 
&\equiv& E(x^{-1}\phi_{q}^{\mathbf{p}_{0}}(\eta_{q}(l_{i}))x)-E(\eta_{q}(l_{i})) \pmod{\mathcal{D}_{i}} \\
&\equiv& E(x^{-1}\phi_{q}^{\mathbf{p}_{0}}(\eta_{q}(l_{i}))x)-E(x^{-1}\eta_{q}(l_{i})x) \pmod{\mathcal{D}_{i}} \\
&=& E(x^{-1})
\left(E(\phi_{q}^{\mathbf{p}_{0}}(\eta_{q}(l_{i})))-E(\eta_{q}(l_{i}))
\right)E(x) \\
&\equiv& 0\pmod{\mathcal{D}_{i}}. 
\end{eqnarray*} 
Since we may assume that $q$ is sufficiently large, 
\[\mu_{\mathbf{p}}(j_1\ldots j_si)-\mu(j_1\ldots j_si)\equiv 0 \pmod{\Delta(j_1\ldots j_si)}
\]
for any sequence $j_{1}\ldots j_{s}i$.

(2)~This is proved by induction on the length $k$ of $I$. 
For $k=2$, we have $\Delta_{\mathbf{p}}(I)=\Delta(I)=0$ by definition. 
Assume that $k\geq2$. 
Let $\mathcal{J}_{1}(I)$ (resp. $\mathcal{J}_{\geq1}(I)$) be the set of all sequences obtained from $I$ by removing exactly one index (resp. at least one index) and permuting the remaining indices cyclicly. 
For any $J\in\mathcal{J}_{1}(I)$, we have $\Delta_{\mathbf{p}}(J)=\Delta(J)$ by the induction hypothesis. 
Then it follows that 
\begin{eqnarray*}
\Delta_{\mathbf{p}}(I)
&=& \gcd{\left\{\mu_{\mathbf{p}}(J)\mid J\in\mathcal{J}_{\geq1}(I)\right\}} \\
&=&\gcd{\left(
\bigcup_{J\in\mathcal{J}_{1}(I)}
\left(
\{\mu_{\mathbf{p}}(J)\}\cup\{\mu_{\mathbf{p}}(J')\mid J'\in\mathcal{J}_{\geq1}(J)\}
\right)
\right)} \\
&=&\gcd{\left(
\bigcup_{J\in\mathcal{J}_{1}(I)}
\left(
\{\mu_{\mathbf{p}}(J)\}\cup\{\Delta_{\mathbf{p}}(J)\}
\right)
\right)} \\
&=&\gcd{\left(
\bigcup_{J\in\mathcal{J}_{1}(I)}
\left(
\{\mu_{\mathbf{p}}(J)\}\cup\{\Delta(J)\}
\right)
\right)}.
\end{eqnarray*}
By~(1) we have $\mu_{\mathbf{p}}(J)\equiv\mu(J)\pmod{\Delta(J)}$. 
This implies that $\Delta_{\mathbf{p}}(I)=\Delta(I)$. 
\end{proof}

\begin{proof}[{Proof of Theorem~\ref{th-w-iso}}]
Since $(D,\mathbf{p})$ and $(D',\mathbf{p}')$ are related by $\overline{\mbox{w}}$-isotopies and base-change moves in Figure~\ref{bpt-change}, 
this follows from Theorem~\ref{th-w-bar} and Proposition~\ref{prop-Delta}. 
\end{proof}

\section{Self-crossing virtualization}
A {\em self-crossing virtualization} is a local move on virtual link diagrams as shown in Figure~\ref{SV}, which replaces a classical crossing involves two strands of a single component with a virtual one. 
In this section, we show the following theorem as a generalization of \cite[Theorem 8]{M57}. 

\begin{figure}[htb]
  \begin{center}
    \begin{overpic}[width=7cm]{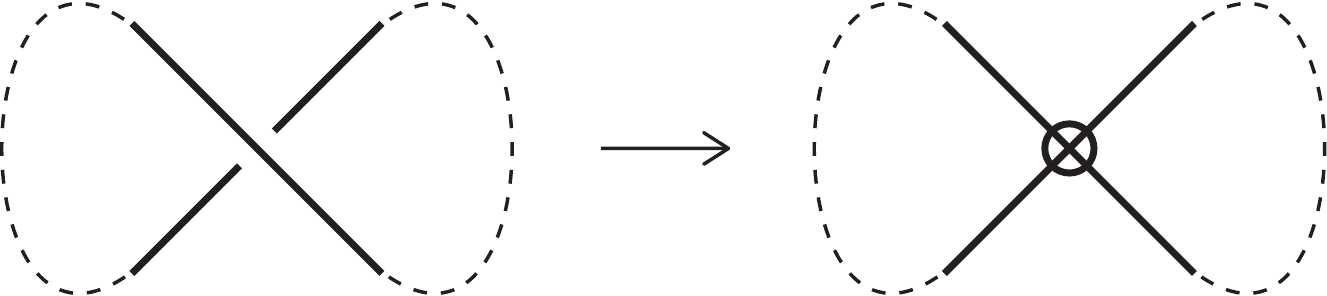}
    \end{overpic}
  \end{center}
  \caption{Self-crossing virtualization}
  \label{SV}
\end{figure}

\begin{theorem}\label{th-sv-link}
Let $L$ and $L'$ be welded links, and 
let $D$ and $D'$ be virtual link diagrams of $L$ and $L'$, respectively. 
If $D$ and $D'$ are related by a finite sequence  of self-crossing virtualizations and 
welded isotopies, then $\omu_{L}(I)=\omu_{L'}(I)$ 
for any non-repeated sequence $I$.
\end{theorem}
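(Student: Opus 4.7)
The plan is, via Theorem~\ref{th-w-iso} and induction on the number of SCV moves, to reduce to the case where $D$ and $D'$ differ by a single SCV at a classical self-crossing $c$ of the $i$th component. Write $u = a_{im}$ for the over-arc and $a_{ij}, a_{ij+1}$ for the under-arcs at $c$, with sign $\varepsilon = \varepsilon_{ij}$. In $D'$ the arcs $a_{ij}$ and $a_{ij+1}$ merge into a single arc (which we continue to call $a_{ij}$), and the writhe of the $i$th component drops by $\varepsilon$. Choose base points on unaffected arcs; by Theorem~\ref{th-w-iso} the Milnor invariants are independent of this choice.

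The first step is to write the preferred longitudes concretely. The old partial longitude factors as $v_{im_i}^D = P \cdot a_{im}^{\varepsilon} \cdot Q$, while $v_{im_i - 1}^{D'} = P \cdot Q'$, where $Q'$ is $Q$ with each occurrence of the label $a_{ij+1}$ replaced by $a_{ij}$. The local relation at $c$, encoded by Lemma~\ref{lem-6k}(2), gives the key congruence
\[
\eta_q^D(a_{ij+1}) \equiv \eta_q^D(a_{im})^{-\varepsilon} \, \eta_q^D(a_{ij}) \, \eta_q^D(a_{im})^{\varepsilon} \pmod{A_q},
\]
so each replacement $a_{ij+1} \mapsto a_{ij}$ amounts, modulo $A_q$, to a conjugation by $\eta_q^D(a_{im})^\varepsilon$. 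Combining this with the writhe shift $a_{i1}^{-w_i} \mapsto a_{i1}^{-w_i + \varepsilon}$ and propagating the changes through arcs downstream of $c$ via Lemma~\ref{lem-p290}, a direct calculation yields, for each $k$,
\[
\eta_q^{D'}(l_k^{D'}) \equiv n_k \cdot \eta_q^D(l_k^D) \pmod{A_q},
\]
where $n_k \in A$ is a product of conjugates of commutators of the form $[\alpha_i, \eta_q^D(g)]$ with $g$ an arc of component $i$.

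The main step is a Magnus expansion argument. For any arc $g$ of component $i$, the element $\eta_q^D(g)$ is a conjugate of $\alpha_i$ by construction, so $E(\eta_q^D(g)) - 1$ has every monomial containing a factor of $X_i$. Using the standard identity
\[
E([\alpha_i, \eta_q^D(g)]) - 1 \equiv [X_i, \, E(\eta_q^D(g)) - 1] \pmod{\text{higher degree terms}},
\]
it follows that $E([\alpha_i, \eta_q^D(g)]) - 1$ has every monomial containing $X_i$ at least twice, and this property is preserved under multiplication and conjugation. Hence $E(n_k) - 1$ enjoys the same property. Now fix a non-repeated sequence $I = j_1 \cdots j_s k$; since $I$ is non-repeated, the monomial $X_{j_1} \cdots X_{j_s}$ contains $X_i$ at most once. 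Expanding
\[
E(n_k \cdot \eta_q^D(l_k^D)) = E(\eta_q^D(l_k^D)) + (E(n_k) - 1) \cdot E(\eta_q^D(l_k^D)),
\]
every monomial $X_M X_N$ appearing in the second summand has $X_M$ containing $X_i$ at least twice, so no such split equals $X_{j_1} \cdots X_{j_s}$, and the coefficient of $X_{j_1} \cdots X_{j_s}$ in the second summand is zero. The $A_q$-part is absorbed by taking $q$ sufficiently large (Lemma~\ref{lem-Magnus}).

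The main obstacle is verifying the structural congruence $\eta_q^{D'}(l_k^{D'}) \equiv n_k \cdot \eta_q^D(l_k^D) \pmod{A_q}$ with $n_k$ of the asserted commutator form. The propagation of the conjugation by $\eta_q^D(a_{im})^\varepsilon$ through all partial longitudes downstream of $c$---both on component $i$ and on other components where $a_{ij+1}$ appears as an over-arc label---requires careful telescoping, and one must verify that each cancellation between the accumulated conjugation and the writhe shift yields a genuine commutator $[\alpha_i, \cdot]$ (or a conjugate thereof) rather than a pure conjugation. Once this structural congruence is established, the Magnus expansion argument concludes the proof, and the equality of indeterminacies $\Delta_{(D, \mathbf{p})}(I) = \Delta_{(D', \mathbf{p}')}(I)$ follows by applying the same argument to each non-repeated subsequence of $I$.
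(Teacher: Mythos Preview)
Your outline is headed in the right direction, but it has a real gap precisely where you flag the ``main obstacle'': you assert the structural congruence
\[
\eta_q^{D'}(l_k^{D'}) \equiv n_k \cdot \eta_q^D(l_k^D) \pmod{A_q}
\]
with $n_k$ a product of conjugates of $[\alpha_i,\eta_q^D(g)]$ for $g$ an arc of component $i$, and then do not prove it. Worse, the claimed shape of $n_k$ is incorrect when $k=i$. There the writhe shift $\alpha_i^{-w_i}\mapsto\alpha_i^{-w_i+\varepsilon}$ combines with the deletion of $\eta_q(a_{im})^{\varepsilon}$ from the partial longitude to give a factor of the form $\alpha_i^{\varepsilon}\beta^{-\varepsilon}$ with $\beta$ some conjugate of $\alpha_i$; this equals a conjugate of $[\alpha_i^{\pm1},h]$ for a \emph{general} $h\in A$, not of $[\alpha_i,g^{-1}\alpha_i g]$. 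Its Magnus expansion has monomials containing $X_i$ only once, so your ``every monomial contains $X_i$ twice'' claim fails. The argument can be rescued for $k=i$ by observing that a non-repeated $I=j_1\cdots j_s i$ has $X_{j_1}\cdots X_{j_s}$ free of $X_i$, so ``at least once'' already suffices---but this requires a case split $k=i$ versus $k\neq i$ that you do not make, and in either case the telescoping remains to be carried out.

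The paper avoids all of this by precomposing with the projection $\rho_k:A\to A^{(k)}$ sending $\alpha_k\mapsto 1$, setting $\eta_q^{(k)}=\rho_k\circ\eta_q$, and working modulo $A_q^{(k)}R^{(k)}$ where $R^{(k)}$ is the normal closure of $\{[\alpha_j,g^{-1}\alpha_j g]:g\in A^{(k)},\ j\neq k\}$. The projection kills both the writhe prefactor and the stray $a_{1l}^{\varepsilon}$ when $k$ is the SCV component (Lemma~\ref{lem-eta}(1)), so the two longitudes differ by the \emph{same} relabeling error for every $k$ (Claim~\ref{claim-sv}), and one then checks once that $E(x)-1$ for $x\in R^{(k)}$ has every monomial containing some $X_j$ twice. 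This removes the case split and replaces your unverified telescoping by a short induction (Claim~\ref{claim-sv}(2)) modulo a normal subgroup, which is what makes the argument go through cleanly.
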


\begin{remark}
In~\cite{AM}, Audoux and Meilhan proved that two virtual link diagrams are related by a finite sequence of self-crossing virtualizations and 
welded isotopies if and only if they have equivalent {\em reduced peripheral systems}. 
This result together with Theorem~\ref{th-sv-link} implies that for welded links, the reduced peripheral system determines Milnor $\omu$-invariants for non-repeated sequences. 
\end{remark}

Let $(D,\mathbf{p})$ be an $n$-component virtual link diagram with a base point system, and let $a_{ij}$ $(1\leq i\leq n, 1\leq j\leq m_i+1)$ be the arcs of $(D,\mathbf{p})$ as given in Section~\ref{sec-prelim}. 
Recall that $A=\langle\alpha_{1},\ldots,\alpha_{n}\rangle$ denotes the free group of rank $n$, and $\overline{A}$ denotes the free group on $\{a_{ij}\}$. 
For $1\leq k\leq n$, let $A^{(k)}=\langle\alpha_{1},\ldots,\alpha_{k-1},\alpha_{k+1},\ldots,\alpha_{n}\rangle$ be the free group of rank $n-1$. 
We define a homomorphism 
$\rho_{k}:A\rightarrow A^{(k)}$ by 
\[
\rho_{k}(\alpha_{i})=
\begin{cases}
\alpha_{i} & (i\neq k), \\
1 & (i=k), 
\end{cases}
\]
and denote by $\eta_{q}^{(k)}=\eta_{q}^{(k)}(D,\mathbf{p})$ the composition $\rho_{k}\circ\eta_{q}:\overline{A}
\rightarrow A^{(k)}.$

\begin{lemma}\label{lem-eta}
The following hold. 
\begin{enumerate}
\item  
$\eta_{q}^{(k)}(a_{kj})=1$ for any $j$. 

\item For any $x\in\overline{A}$, $E(\eta_{q}^{(k)}(x))=E(\eta_{q}(x))|_{X_{k}=0}$, 
where  
$E(\cdot)|_{X_{k}=0}$ denotes the formal power series obtained from $E(\cdot)$ by 
substituting $0$ for $X_{k}$. 
\end{enumerate}
\end{lemma}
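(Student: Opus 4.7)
Both assertions follow directly from the recursive construction of $\eta_q$ and elementary properties of the homomorphisms involved; I would prove (1) by induction on $q$, and reduce (2) to an identity of two homomorphisms on the free group $A$.

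For (1), I induct on $q$. The base case $q=1$ is immediate from $\eta_1(a_{kj}) = \alpha_k$ together with $\rho_k(\alpha_k) = 1$. For the inductive step, the case $j = 1$ follows from $\eta_{q+1}(a_{k1}) = \alpha_k$, and for $j \geq 2$ the recursive definition yields
\[
\eta_{q+1}(a_{kj}) = \eta_q(v_{kj-1}^{-1})\,\alpha_k\,\eta_q(v_{kj-1}),
\]
whence $\rho_k$ (being a group homomorphism) sends this to $\rho_k(\eta_q(v_{kj-1}^{-1})) \cdot 1 \cdot \rho_k(\eta_q(v_{kj-1})) = 1$. In fact this step does not even require the inductive hypothesis, since $\eta_q(a_{kj})$ is manifestly a conjugate of $\alpha_k$ for every $q$, and every conjugate of $\alpha_k$ is killed by $\rho_k$.

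For (2), the key observation is that the formal substitution $X_k = 0$ corresponds at the group-theoretic level to the projection $\rho_k$. Let $\sigma_k$ denote the continuous ring endomorphism of $\mathbb{Z}\langle\langle X_1, \ldots, X_n\rangle\rangle$ sending $X_k$ to $0$ and fixing the remaining $X_i$, so that $\sigma_k(f) = f|_{X_k=0}$. Let $E^{(k)} \colon A^{(k)} \to \mathbb{Z}\langle\langle X_1, \ldots, X_{k-1}, X_{k+1}, \ldots, X_n\rangle\rangle$ be the Magnus expansion associated to $A^{(k)}$, regarded via the obvious inclusion as taking values in $\mathbb{Z}\langle\langle X_1, \ldots, X_n\rangle\rangle$. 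Then $\sigma_k \circ E$ and $E^{(k)} \circ \rho_k$ are two group homomorphisms from $A$ to the units of $\mathbb{Z}\langle\langle X_1, \ldots, X_n\rangle\rangle$. Evaluating both on each generator $\alpha_i$ gives $1$ when $i = k$ and $1 + X_i$ when $i \neq k$, so the two homomorphisms coincide on all of $A$. Applying this identity to $\eta_q(x) \in A$ yields
\[
E(\eta_q^{(k)}(x)) = E^{(k)}(\rho_k(\eta_q(x))) = \sigma_k(E(\eta_q(x))) = E(\eta_q(x))|_{X_k = 0},
\]
which is precisely (2).

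The lemma is essentially a formal consequence of the definitions, and no real obstacle arises. The only point requiring care is the setup in (2): one must identify the codomain of $E^{(k)}$ with the subring of $\mathbb{Z}\langle\langle X_1, \ldots, X_n\rangle\rangle$ consisting of series with no $X_k$, so that $\sigma_k \circ E$ and $E^{(k)} \circ \rho_k$ can be compared as maps into the same ring. Once this identification is made, checking equality on the finitely many generators $\alpha_i$ is immediate.
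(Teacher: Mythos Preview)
Your proof is correct. Part (1) is handled exactly as in the paper: both you and the authors observe that $\eta_q(a_{kj})$ is a conjugate of $\alpha_k$, hence is killed by $\rho_k$.

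For part (2), however, you take a genuinely different route. The paper argues by induction on $q$: it reduces to the case $x=a_{ij}$ and then pushes the substitution $X_k=0$ through the recursive formula $\eta_{q+1}(a_{ij})=\eta_q(v_{ij-1}^{-1})\alpha_i\eta_q(v_{ij-1})$ using the inductive hypothesis. You instead prove once and for all the identity $\sigma_k\circ E = E^{(k)}\circ\rho_k$ as group homomorphisms $A\to\mathbb{Z}\langle\langle X_1,\ldots,X_n\rangle\rangle^\times$, by checking it on the generators $\alpha_i$, and then simply evaluate at $\eta_q(x)\in A$. Your argument is cleaner and more conceptual: it isolates the purely algebraic fact that the Magnus expansion intertwines the projection $\rho_k$ with the substitution $X_k=0$, a statement that has nothing to do with the diagram or with $q$. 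The paper's induction, while perfectly correct and short, entangles this observation with the recursive structure of $\eta_q$ unnecessarily.
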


\begin{proof}
(1)~
Since $\eta_q(a_{kj})$ is a conjugate of $\alpha_k$, 
$\eta_q^{(k)}(a_{kj})$ is a conjugate of $\rho_k(\alpha_k)=1$.

(2)~
It is enough to show the case $x=a_{ij}$. 
This is proved by induction on $q$. 
The assertion certainly holds for $q=1$ or $j=1$. 
Assume that $q\geq1$ and $j\geq2$.
Then we have 
$E(\eta_{q}^{(k)}(v_{ij-1}))=E(\eta_{q}(v_{ij-1}))|_{X_{k}=0}$ 
by the induction hypothesis. 
Hence it follows that 
\begin{eqnarray*}
E(\eta_{q+1}^{(k)}(a_{ij}))&=&E(\rho_{k}(\eta_{q}(v_{ij-1}^{-1})\alpha_{i}\eta_{q}(v_{ij-1}))) \\ 
&=& E(\eta_{q}^{(k)}(v_{ij-1}^{-1}))(E(\alpha_{i})|_{X_{k}=0})E(\eta_{q}^{(k)}(v_{ij-1})) \\
&=& E(\eta_{q}(v_{ij-1}^{-1})\alpha_{i}\eta_{q}(v_{ij-1}))|_{X_{k}=0} \\
&=& E(\eta_{q+1}(a_{ij}))|_{X_{k}=0}. 
\end{eqnarray*}
\end{proof}

Let $R$ be the normal closure of $\{[\alpha_{i},g^{-1}\alpha_{i}g]\mid g\in A, 1\leq i\leq n\}$ in $A$, 
and let $R^{(k)}$ be the normal closure of $\{[\alpha_{i},g^{-1}\alpha_{i}g]\mid g\in A^{(k)}, 1\leq i\neq k\leq n\}$ in $A^{(k)}$. 
Note that $[g^{-1}\alpha_{i}g,h^{-1}\alpha_{i}h]\in R$ for any $g,h\in A$. 
In particular, $\eta_{q}([a_{is}^{\e},a_{it}^{\de}])\in R$ for any $s,t$ and any $\e,\de\in\{\pm1\}$. 
Let $A_{q}^{(k)}$ be the $q$th term of the lower central series of $A^{(k)}$. 
Then we have the following.

\begin{lemma}\label{lem-rho}
Let $x,y\in A$. If $x\equiv y\pmod{A_{q}R}$, then 
\[\rho_{k}(x)\equiv\rho_{k}(y) \pmod{A_{q}^{(k)}R^{(k)}}.
\]
\end{lemma}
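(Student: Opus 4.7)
The plan is straightforward: since $\rho_k$ is a group homomorphism and both $A_qR$ and $A_q^{(k)}R^{(k)}$ are normal subgroups of their respective groups, it suffices to prove the two containments
\begin{equation*}
\rho_k(A_q) \subset A_q^{(k)} \quad \text{and} \quad \rho_k(R) \subset R^{(k)}.
\end{equation*}
Granting these, if $x \equiv y \pmod{A_qR}$, then $xy^{-1} = ar$ for some $a \in A_q$ and $r \in R$ (using normality to write the product subgroup as a set product), whence $\rho_k(x)\rho_k(y)^{-1} = \rho_k(a)\rho_k(r) \in A_q^{(k)}R^{(k)}$, giving the conclusion.

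For the first containment, I would argue by induction on $q$: the case $q=1$ is $\rho_k(A) \subset A^{(k)}$, which is immediate. For the inductive step, any generator of $A_{q+1}$ has the form $[g,h]$ with $g \in A$ and $h \in A_q$, and
\begin{equation*}
\rho_k([g,h]) = [\rho_k(g), \rho_k(h)] \in [A^{(k)}, A_q^{(k)}] = A_{q+1}^{(k)}
\end{equation*}
by the induction hypothesis.

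For the second containment, I would argue directly on the normal generators of $R$. A typical normal generator has the form $h^{-1}[\alpha_i, g^{-1}\alpha_i g]h$ with $g,h \in A$ and $1 \le i \le n$. Applying $\rho_k$ gives
\begin{equation*}
\rho_k(h)^{-1}[\rho_k(\alpha_i), \rho_k(g)^{-1}\rho_k(\alpha_i)\rho_k(g)]\rho_k(h).
\end{equation*}
If $i = k$, then $\rho_k(\alpha_k) = 1$ and the whole element is trivial. If $i \neq k$, then $\rho_k(\alpha_i) = \alpha_i$ and $\rho_k(g) \in A^{(k)}$, so the element is a conjugate (by $\rho_k(h) \in A^{(k)}$) of $[\alpha_i, \rho_k(g)^{-1}\alpha_i \rho_k(g)]$, which lies in $R^{(k)}$ by its defining normal generating set.

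There is no real obstacle here — the lemma is a formal consequence of $\rho_k$ being a homomorphism that kills $\alpha_k$, together with the fact that both $A_q$ and $R$ are characterized by generators whose images under $\rho_k$ are manifestly of the required form. The only minor care needed is in writing $A_q R$ as a set product, which is legitimate because $A_q$ and $R$ are both normal in $A$.
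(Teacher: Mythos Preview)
Your proof is correct and follows essentially the same approach as the paper: reduce to the two containments $\rho_k(A_q)\subset A_q^{(k)}$ and $\rho_k(R)\subset R^{(k)}$, handle the first by induction on $q$ using $\rho_k([g,h])=[\rho_k(g),\rho_k(h)]$, and handle the second by checking the normal generators. Your write-up is slightly more detailed (you make the case split $i=k$ versus $i\neq k$ explicit and track the conjugating element $h$), but the argument is the same.
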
 

\begin{proof}
Since $\rho_{k}([\alpha_{i},g^{-1}\alpha_{i}g])\in R^{(k)}$ for any $g\in A$ and $1\leq i\leq n$, we have $\rho_{k}(R)\subset R^{(k)}$. 
Therefore it is enough to show that $\rho_{k}(A_{q})\subset A_{q}^{(k)}$. 

This is proved by induction on $q$. 
For $q=1$ the assertion is obvious. 
Assume that $q\geq1$. 
For $[x,y]\in A_{q+1}$ with $x\in A$ and $y\in A_{q}$, 
by the induction hypothesis, we have 
$\rho_{k}([x,y])=[\rho_{k}(x),\rho_{k}(y)]\in A_{q+1}^{(k)}$. 
\end{proof}

\begin{proposition}\label{prop-sv-eta}
Let $(D,\mathbf{p})$ and $(D',\mathbf{p}')$ be $n$-component virtual link diagrams with base point systems. 
For an integer $k\in\{1,\ldots,n\}$, let $l_{k}$ and $l'_{k}$ be the $k$th preferred longitudes of $(D,\mathbf{p})$ and $(D',\mathbf{p}')$, respectively. 
If $(D,\mathbf{p})$ and $(D',\mathbf{p}')$ are related by a self-crossing virtualization, then 
$\eta_{q}^{(k)}(D,\mathbf{p})(l_{k})\equiv\eta_{q}^{(k)}(D',\mathbf{p}')(l'_{k}) \pmod{A_{q}^{(k)}R^{(k)}}$. 
\end{proposition}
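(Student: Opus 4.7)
The plan is to follow the strategy of the proofs of Propositions~\ref{prop-R1}--\ref{prop-R3}: establish a correspondence between arcs of $(D,\mathbf{p})$ and $(D',\mathbf{p}')$, prove an arc-by-arc congruence by induction on $q$, and then deduce the congruence for the preferred longitudes. Let $i$ be the component on which the self-crossing virtualization is performed; in $(D,\mathbf{p})$, let the crossing have over-arc $u=a_{is}$ and under-arcs $a_{ij},a_{ij+1}$ (all on the $i$-th component), with sign $\e$. In $(D',\mathbf{p}')$, the two under-arcs merge into a single arc $b$, while every other arc of $(D,\mathbf{p})$ has a unique correspondent in $(D',\mathbf{p}')$, denoted $a'_{k'\ell}$ (with the convention $a'_{ij}=a'_{ij+1}=b$).

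The central algebraic observation is that Lemma~\ref{lem-6k}(2) gives $\eta_{q}(a_{ij+1})\equiv\eta_{q}(u^{-\e})\eta_{q}(a_{ij})\eta_{q}(u^{\e})\pmod{A_q}$, and since $\eta_q(a_{ij})$ and $\eta_q(u)$ are both conjugates of $\alpha_i$, they commute modulo $R$ (any two conjugates of $\alpha_i$ commute in $A/R$). Hence $\eta_{q}(a_{ij+1})\equiv\eta_{q}(a_{ij})\pmod{A_qR}$, and so $\eta_q^{(k)}(a_{ij+1})\equiv\eta_q^{(k)}(a_{ij})\pmod{A_q^{(k)}R^{(k)}}$ by Lemma~\ref{lem-rho}. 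Using this as input, I would prove by induction on $q$ that
\[
\eta_{q}^{(k)}(a_{k'\ell})\equiv\eta_{q}^{(k)}(a'_{k'\ell})\pmod{A_q^{(k)}R^{(k)}}
\]
for every arc $a_{k'\ell}$. The inductive step uses the recursive definition of $\eta_{q+1}$ together with Lemma~\ref{lem-p290}(2), after first establishing the analogous congruence for partial longitudes $\eta_q^{(k)}(v_{k'\ell-1})\equiv\eta_q^{(k)}(v'_{k'\ell-1})\pmod{A_q^{(k)}R^{(k)}}$. The latter requires particular care when $k'=i$ and $v_{k'\ell-1}$ contains the letter $u^{\e}$ coming from the self-crossing (which is absent in $v'_{k'\ell-1}$): there one absorbs $u^{\e}$ using the key observation modulo $A_qR$ before applying $\rho_k$. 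This partial-longitude rewriting is the main technical obstacle.

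Finally, to pass from arcs to preferred longitudes: if $i\neq k$, then $w_k=w'_k$ and $l_k$, $l'_k$ have the same form with arcs in correspondence, so the arc-by-arc congruence immediately yields $\eta_q^{(k)}(l_k)\equiv\eta_q^{(k)}(l'_k)\pmod{A_q^{(k)}R^{(k)}}$. If $i=k$, then $w_k$ shifts by $\e$ and the letter $u^{\e}$ drops out of $v_{km_k}$; however, both $a_{k1}$ and $u$ lie on the $k$-th component, so Lemma~\ref{lem-eta}(1) gives $\eta_q^{(k)}(a_{k1})=\eta_q^{(k)}(u)=1$, rendering these differences invisible after $\rho_k$, and the conclusion again reduces to the arc-by-arc congruence.
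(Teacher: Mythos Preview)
Your overall strategy matches the paper's proof: reduce to an arc-by-arc congruence (the paper's Claim~\ref{claim-sv}), established by induction on $q$, using the commutation of conjugates of $\alpha_i$ modulo $R$ and Lemma~\ref{lem-rho} to pass to $A_q^{(k)}R^{(k)}$; then handle the longitudes by cases $k=i$ and $k\neq i$, invoking Lemma~\ref{lem-eta}(1) in the former.

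One point needs sharpening. You write that the inductive step proceeds ``after first establishing the analogous congruence for partial longitudes $\eta_q^{(k)}(v_{k'\ell-1})\equiv\eta_q^{(k)}(v'_{k'\ell-1})$'', and that in the special case $k'=i$ with $u^{\e}$ present this ``requires particular care''. In fact that partial-longitude congruence is \emph{false} in that case when $k\neq i$: the extra factor $\eta_q^{(k)}(u^{\e})$ is a nontrivial conjugate of $\alpha_i^{\e}$ and does not lie in $A_q^{(k)}R^{(k)}$. What the paper does---and what your phrase ``absorbs $u^{\e}$'' must mean for the argument to go through---is to bypass the partial-longitude congruence and work directly with the conjugated generator: writing $v_{i\ell-1}=xu^{\e}z$, one has
\[
\eta_{q+1}(a_{i\ell})=\eta_q(z)^{-1}\,\eta_q(u)^{-\e}\bigl(\eta_q(x)^{-1}\alpha_i\,\eta_q(x)\bigr)\eta_q(u)^{\e}\,\eta_q(z),
\]
and since $\eta_q(u)$ and $\eta_q(x)^{-1}\alpha_i\eta_q(x)$ are both conjugates of $\alpha_i$, they commute modulo $R$, so the $\eta_q(u)^{\pm\e}$ cancel and one is reduced to $\eta_q(z)^{-1}\eta_q(x)^{-1}\alpha_i\,\eta_q(x)\eta_q(z)$, to which the induction hypothesis and Lemma~\ref{lem-p290}(2) apply. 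Your ``key observation'' about the under-arcs is a special instance of this same commutation principle, but the absorption happens at the level of $\eta_{q+1}(a_{i\ell})$, not of $\eta_q(v_{i\ell-1})$. With that correction, your argument is exactly the paper's.
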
 

\begin{proof}
Assume that $(D,\mathbf{p})$ and $(D',\mathbf{p}')$ are identical except in a disk~$\delta$, where they differ as shown in Figure~\ref{pf-SV}. 
Furthermore, without loss of generality, we may assume that the self-crossing virtualization in the figure is applied to the $1$st component. 

\begin{figure}[htb]
  \begin{center}
    \vspace{1em}
    \begin{overpic}[width=9cm]{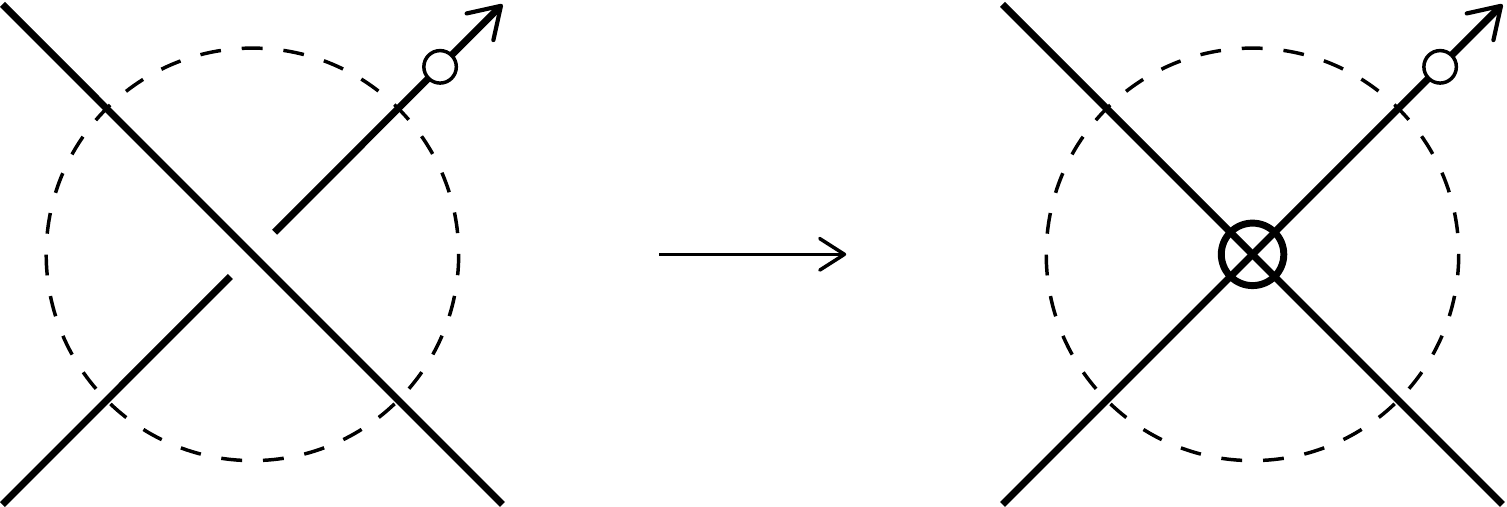}
      \put(29,-15){$(D,\mathbf{p})$}
      \put(41,83){$\delta$}
      \put(198,-15){$(D',\mathbf{p}')$}
      \put(211,83){$\delta$}
      \put(-11,92){$a_{1l}$}
      \put(-11,-8){$a_{1h}$}
      \put(92,92){$a_{1h+1}$}
      \put(92,79){or $a_{11}$}
      \put(62,49){$b$}
      \put(159,92){$a'_{1l}$}
      \put(159,-8){$a'_{1h}$}
      \put(262,92){$a'_{1h+1}$}
      \put(262,79){or $a'_{11}$}
    \end{overpic}
  \end{center}
  \vspace{1em}
  \caption{A self-crossing virtualization which relates $(D,\mathbf{p})$ to $(D',\mathbf{p}')$}
  \label{pf-SV}
\end{figure}

Let $a'_{ij}$ $(1\leq i\leq n, 1\leq j\leq m_i+1)$ be the arcs of $(D',\mathbf{p}')$ as given in Section~\ref{sec-prelim}. 
Let $a_{ij}$ and $b$ be the arcs of $(D,\mathbf{p})$ such that each $a_{ij}$ corresponds to the arc $a'_{ij}$ of $(D',\mathbf{p}')$, and $b$ intersects the disk $\delta$ as shown in Figure~\ref{pf-SV}. 
Note that the domain of $\eta_{q}^{(k)}(D,\mathbf{p})$ is the free group $\overline{A}$ on $\{a_{ij}\}\cup \{b\}$. 
Let $\e\in\{\pm1\}$ be the sign of the classical crossing among $a_{1h},a_{1l}$ and $b$ in Figure~\ref{pf-SV}. 
Since the self-crossing virtualization is applied to the $1$st component, 
$l_{1}$ and $l'_{1}$ can be written in the forms 
\[
l_{1}=a_{11}^{-w_{1}}xa_{1l}^{\e}y\hspace{1em} \mbox{and}\hspace{1em}  l'_{1}=(a'_{11})^{-w_{1}-\e}x'y'
\]
for certain words $x,y\in \overline{A}$ and $x',y'\in\overline{A'}$ such that $x'$ and $y'$ are obtained from $x$ and $y$, respectively, by replacing $b$ with $a'_{1h}$, and $a_{st}$ with $a'_{st}$ for all $s,t$. 
For $2\leq i\leq n$, each $l'_{i}$ is obtained from $l_{i}$ by replacing $b$ with $a'_{1h}$, and $a_{st}$ with $a'_{st}$ for all $s,t$. 
To complete the proof, we use the following.

\begin{claim}\label{claim-sv}
$(1)$~
$\eta_{q}^{(k)}(D,\mathbf{p})(a_{1h})\equiv\eta_{q}^{(k)}(D,\mathbf{p})(b) \pmod{A_{q}^{(k)}R^{(k)}}.$

$(2)$~ 
${\eta}_{q}^{(k)}(D,\mathbf{p})(a_{ij})\equiv\eta_{q}^{(k)}(D',\mathbf{p}')(a'_{ij}) \pmod{A_{q}^{(k)}R^{(k)}}$ 
for any $i, j$.
\end{claim}

Before showing this claim, we observe it implies Proposition~\ref{prop-sv-eta}. 

If $k\neq1$, then the congruence $\eta_{q}^{(k)}(D,\mathbf{p})(l_{k})\equiv\eta_{q}^{(k)}(D',\mathbf{p}')(l'_{k})\pmod{A_{q}^{(k)}R^{(k)}}$ follows from Claim~\ref{claim-sv} directly. 
If $k=1$, then by Lemma~\ref{lem-eta}(1) we have 
\[
\eta_{q}^{(1)}(D,\mathbf{p})(l_{1})=\eta_{q}^{(1)}(D,\mathbf{p})(xy) \hspace{1em} 
\mbox{and}\hspace{1em}  
\eta_{q}^{(1)}(D',\mathbf{p}')(l'_{1})=\eta_{q}^{(1)}(D',\mathbf{p}')(x'y'). 
\] 
Hence Claim~\ref{claim-sv} completes the proof.
\end{proof}

\begin{proof}[{Proof of Claim~\ref{claim-sv}}] 
(1)~
By Lemma~\ref{lem-rho}, it is enough to show that 
\[
\eta_{q}(a_{1h})\equiv\eta_{q}(b)\pmod{A_{q}R}. 
\] 
Lemma~\ref{lem-6k}(2) implies that 
\[
\eta_{q}(b^{-1}a_{1l}^{-\e}a_{1h}a_{1l}^{\e})\equiv0\pmod{A_{q}}.
\] 
Hence it follows that
\begin{eqnarray*}
\eta_{q}(b^{-1}a_{1h})
&=&\eta_{q}(b^{-1}a_{1l}^{-\e}a_{1h}a_{1l}^{\e}a_{1l}^{-\e}a_{1h}^{-1}a_{1l}^{\e}a_{1h}) \\
&\equiv& \eta_{q}([a_{1l}^{-\e},a_{1h}^{-1}]) \pmod{A_{q}} \\
&\equiv& 1 \pmod{R}. 
\end{eqnarray*}

(2)~
Put $\eta'_{q}=\eta_{q}(D',\mathbf{p}')$ for short. 
By Lemma~\ref{lem-rho}, it is enough to show that 
\[
\eta_{q}(a_{ij})\equiv\eta'_{q}(a'_{ij})\pmod{A_{q}R}. 
\] 
for any $1\leq i\leq n$ and $1\leq j\leq m_i+1$. 

This is proved by induction on $q$. 
The assertion is certainly true for $q=1$ or $j=1$. 
Assume that $q\geq1$ and $j\geq2$. 
Let $v_{ij-1}$ and $v'_{ij-1}$ be partial longitudes of $(D,\mathbf{p})$ and $(D',\mathbf{p}')$, respectively. 
If $i\neq1$ or $v_{ij-1}$ does not contain the word $xa_{1l}^{\e}$, then 
$v'_{ij-1}$ is obtained from $v_{ij-1}$ by replacing $b$ with $a'_{1h}$, and $a_{st}$ with $a'_{st}$ for all $s,t$. 
By the proof of (1) and the induction hypothesis, we have 
$\eta_{q}(v_{ij-1})\equiv\eta'_{q}(v'_{ij-1})\pmod{A_{q}R}$. 
Hence Lemma~\ref{lem-p290}(2) implies that 
\[
\eta_{q+1}(a_{ij})\equiv\eta'_{q+1}(a'_{ij})\pmod{A_{q+1}R}. 
\] 
If $i=1$ and $v_{ij-1}$ contains $xa_{1l}^{\e}$, then 
$v_{1j-1}$ and $v'_{1j-1}$ can be written in the forms 
\[
v_{1j-1}=xa_{1l}^{\e}z\hspace{1em} \mbox{and}\hspace{1em}  v'_{1j-1}=x'z'
\] 
for certain words $z\in\overline{A}$ and $z'\in\overline{A'}$ such that $z'$ is obtained from $z$ by replacing $b$ with $a'_{1h}$, and $a_{st}$ with $a'_{st}$ for all $s,t$. 
Then we have
\[
\eta_{q}(xz)\equiv\eta'_{q}(x'z')\pmod{A_{q}R}
\] 
by the proof of (1) and the induction hypothesis. 
Therefore it follows that 
\begin{eqnarray*}
\eta_{q+1}(a_{1j})
&=&\eta_{q}(v_{1j-1}^{-1})\alpha_{1}\eta_{q}(v_{1j-1}) \\
&=& \eta_{q}(z^{-1})\eta_q(a_{1l}^{-\e})\eta_{q}(x^{-1})\alpha_{1}\eta_{q}(x)\eta_q(a_{1l}^{\e})\eta_{q}(z) \\ 
&\equiv& \eta_{q}(z^{-1})\eta_q(a_{1l}^{-\e})\eta_q(a_{1l}^{\e})\left(\eta_{q}(x^{-1})\alpha_{1}\eta_{q}(x)\right)\eta_{q}(z) \pmod{R} \\
&\equiv& \eta'_{q}((z')^{-1}(x')^{-1})\alpha_{1}\eta'_{q}(x'z') \pmod{A_{q+1}R} \\
&=& \eta'_{q}((v'_{1j-1})^{-1})\alpha_{1}\eta'_{q}(v'_{1j-1}) \\
&=& \eta'_{q+1}(a'_{1j}). 
\end{eqnarray*} 
\end{proof}

For a sequence $j_{1}\ldots j_{s}i$ $(1\leq s<q)$ of indices in $\{1,\ldots,n\}$,  
we denote by $\mu_{(D,\mathbf{p})}^{(q,k)}(j_{1}\ldots j_{s}i)$ the coefficient of $X_{j_{1}}\cdots X_{j_{s}}$ in 
$E(\eta_{q}^{(k)}(l_{i}))$. 
We remark that by Lemma~\ref{lem-length}, 
\[
\mu_{(D,\mathbf{p})}^{(q,k)}(j_{1}\ldots j_{s}i)=\mu_{(D,\mathbf{p})}^{(q+1,k)}(j_{1}\ldots j_{s}i).
\]  
Furthermore, by Lemma~\ref{lem-eta}(2), if the sequence $j_{1}\ldots j_{s}$ involves the index $k$, then $\mu_{(D,\mathbf{p})}^{(q,k)}(j_{1}\ldots j_{s}i)=0$. 
On the other hand, if $j_{1}\ldots j_{s}$ does not involve $k$, then $\mu_{(D,\mathbf{p})}^{(q,k)}(j_{1}\ldots j_{s}i)=\mu_{(D,\mathbf{p})}^{(q)}(j_{1}\ldots j_{s}i)~(=\mu_{(D,\mathbf{p})}(j_{1}\ldots j_{s}i))$.

\begin{theorem}\label{th-sv}
Let $(D,\mathbf{p})$ and $(D',\mathbf{p}')$ be virtual link diagrams with base point systems. 
If $(D,\mathbf{p})$ and $(D',\mathbf{p}')$ are related by a self-crossing virtualization, then $\mu_{(D,\mathbf{p})}(I)=\mu_{(D',\mathbf{p}')}(I)$ for any non-repeated sequence $I$. 
\end{theorem}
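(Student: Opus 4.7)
The plan is to reduce the theorem to Proposition~\ref{prop-sv-eta} by exploiting that non-repeated sequences allow us to pass to the quotient $A^{(k)}$ for a well-chosen index $k$.

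For a non-repeated sequence $I=j_1\ldots j_s i$, the index $i$ does not appear among $j_1,\ldots,j_s$, so I would first choose $k=i$ and invoke the remark preceding the theorem: since $j_1\ldots j_s$ does not involve $k=i$, one has
\[
\mu_{(D,\mathbf{p})}(j_1\ldots j_s i)=\mu_{(D,\mathbf{p})}^{(q,i)}(j_1\ldots j_s i),
\]
i.e.\ the Milnor number equals the coefficient of $X_{j_1}\cdots X_{j_s}$ in $E(\eta_q^{(i)}(l_i))$. The same holds for $(D',\mathbf{p}')$. By Proposition~\ref{prop-sv-eta} applied with $k=i$, we have
$\eta_q^{(i)}(l_i)\equiv \eta_q^{(i)}(l'_i)\pmod{A_q^{(i)}R^{(i)}}$,
so there exists $z_1\in A_q^{(i)}$ and $z_2\in R^{(i)}$ with $\eta_q^{(i)}(l_i)=\eta_q^{(i)}(l'_i)z_1z_2$. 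Writing $L=E(\eta_q^{(i)}(l'_i))$, we get
\[
E(\eta_q^{(i)}(l_i))-E(\eta_q^{(i)}(l'_i))=L\bigl(E(z_1)E(z_2)-1\bigr)=L(E(z_1)-1)E(z_2)+L(E(z_2)-1).
\]
Since $q$ is chosen sufficiently large ($q>s$), Lemma~\ref{lem-Magnus} in $A^{(i)}$ gives that $E(z_1)-1$ has all monomials of degree $\geq q>s$, so the first summand contributes nothing to the coefficient of $X_{j_1}\cdots X_{j_s}$.

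The remaining, and main, task is to show that the coefficient of $X_{j_1}\cdots X_{j_s}$ in $L\bigl(E(z_2)-1\bigr)$ vanishes, for which it suffices to establish the following key lemma: for every $r\in R^{(i)}$, each monomial appearing in $E(r)-1$ contains some index that is repeated. Granting this, every monomial of $L(E(z_2)-1)$ factors as (monomial from $L$)$\cdot$(monomial from $E(z_2)-1$), hence inherits a repeated index and cannot equal the non-repeated $X_{j_1}\cdots X_{j_s}$.

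To prove the key lemma, I would argue in three steps. For a generator $[\alpha_a,g^{-1}\alpha_a g]$ of $R^{(i)}$ (with $a\neq i$, $g\in A^{(i)}$), direct computation gives $E(g^{-1}\alpha_a g)-1=E(g^{-1})X_a E(g)$, so every monomial of this expression contains exactly one $X_a$; similarly for $E(g^{-1}\alpha_a^{-1}g)-1$. Expanding $E([\alpha_a,g^{-1}\alpha_a g])-1$ as a sum of products of $E(\alpha_a^{\pm1})-1$ and $E(g^{-1}\alpha_a^{\pm1}g)-1$, terms with two or more factors manifestly contain $X_a$ at least twice, while the single-factor contributions (the linear-in-$X_a$ and zero-in-$X_a$ parts of the sum of the four "$-1$" pieces) cancel because the exponent sum of $\alpha_a$ in the commutator is zero. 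For conjugates $u^{-1}ru$, one uses $E(u^{-1}ru)-1=E(u^{-1})(E(r)-1)E(u)$, which preserves the "contains $X_a$ twice" property. For products $r_1r_2$, the identity $E(r_1r_2)-1=(E(r_1)-1)(E(r_2)-1)+(E(r_1)-1)+(E(r_2)-1)$ shows that if each $E(r_j)-1$ has every monomial with some repeated index, so does $E(r_1r_2)-1$. An induction on word length in the normally generating set then yields the lemma. The delicate point — the main obstacle — is precisely this cancellation of the linear-$X_a$ terms in the generator step, which uses the precise form of the commutator $[\alpha_a,g^{-1}\alpha_a g]$ (with the \emph{same} letter $\alpha_a$ on both sides); this is exactly why the self-crossing virtualization hypothesis is needed.
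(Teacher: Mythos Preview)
Your proof is correct and follows the same route as the paper: take $k$ equal to the last index of $I$, pass to $\mu^{(q,k)}$ via the remark preceding the theorem, invoke Proposition~\ref{prop-sv-eta}, and show that no non-repeated monomial survives in $E(x)-1$ for $x\in A_q^{(k)}R^{(k)}$. The only notable difference is that the paper handles the generator case with the slicker identity
\[
E([\alpha_a,g^{-1}\alpha_a g])-1=\bigl(X_aE(g^{-1})X_aE(g)-E(g^{-1})X_aE(g)X_a\bigr)\,E(\alpha_a^{-1}g^{-1}\alpha_a^{-1}g),
\]
in which every monomial visibly contains $X_a$ twice, whereas you are more explicit than the paper about extending from generators to the full normal closure via conjugates and products.
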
 

\begin{proof} 
Let $k$ be the last index of a non-repeated sequence $I$. 
Then we may put $I=Jk$. 
Since $J$ does not involve $k$, we have 
$\mu_{(D,\mathbf{p})}(Jk)=\mu_{(D,\mathbf{p})}^{(q,k)}(Jk)$  and
$\mu_{(D',\mathbf{p}')}(Jk)=\mu_{(D',\mathbf{p}')}^{(q,k)}(Jk)$.  
To complete the proof, we will show that 
$\mu^{(q,k)}_{(D,\mathbf{p})}(Jk)=\mu^{(q,k)}_{(D',\mathbf{p}')}(Jk)$. 

For $x\in A_{q}^{(k)}R^{(k)}$, we put 
\[
E(x)=1+\sum\nu(j_{1}\ldots j_{s})X_{j_{1}}\cdots X_{j_{s}}. 
\]
By Proposition~\ref{prop-sv-eta}, it is enough to show that 
$\nu(j_{1}\ldots j_{s})=0$ for any non-repeated sequence $j_{1}\ldots j_{s}$ with $s<q$. 

If $x\in A_{q}^{(k)}$, then we have $\nu(j_{1}\ldots j_{s})=0$ by Lemma~\ref{lem-Magnus}. 
If $x\in R^{(k)}$, then we only need to consider the case $x=[\alpha_{i},g^{-1}\alpha_{i}g]$ $(g\in A^{(k)}, 1\leq i\neq k\leq n)$. 
Then it follows that
\begin{eqnarray*}
E(x)-1&=&E([\alpha_{i},g^{-1}\alpha_{i}g])-1 \\
&=&\left(E(\alpha_{i}g^{-1}\alpha_{i}g)-
E(g^{-1}\alpha_{i}g\alpha_{i})\right)E(\alpha_{i}^{-1}g^{-1}\alpha_{i}^{-1}g). 
\end{eqnarray*}
Here we observe that 
\begin{eqnarray*}
&&E(\alpha_{i}g^{-1}\alpha_{i}g)
-E(g^{-1}\alpha_{i}g\alpha_{i}) \\
&&= (1+X_{i})E(g^{-1})(1+X_{i})E(g)
-E(g^{-1})(1+X_{i})E(g)(1+X_{i}) \\
&&= X_{i}E(g^{-1})X_{i}E(g)-E(g^{-1})X_{i}E(g)X_{i}. 
\end{eqnarray*} 
This implies that each term of $E(x)-1$ contains $X_{i}$ at least twice. 
Hence we have $\nu(j_{1}\ldots j_{s})=0$ for any non-repeated sequence $j_{1}\ldots j_{s}$. 
\end{proof}

\begin{proof}[{Proof of Theorem~\ref{th-sv-link}}] 
Let $\mathbf{p}$ and $\mathbf{p}'$ be base point systems of $D$ and $D'$, respectively. 
Then $(D,\mathbf{p})$ and $(D',\mathbf{p}')$ are related by a finite  sequence of self-crossing virtualizations, $\overline{\mbox{w}}$-isotopies and base-change moves. 
If $(D,\mathbf{p})$ and $(D',\mathbf{p}')$ are related by a self-crossing virtualization, 
then by Theorem~\ref{th-sv} 
$\mu_{(D,\mathbf{p})}(I)=\mu_{(D',\mathbf{p}')}(I)$ for any non-repeated sequence $I$. 
This implies that $\Delta_{(D,\mathbf{p})}(I)=\Delta_{(D',\mathbf{p}')}(I)$. 
If $(D,\mathbf{p})$ and $(D',\mathbf{p}')$ are related by a $\overline{\mbox{w}}$-isotopy or base-change moves, 
then it follows from Theorems~\ref{th-w-bar} and~\ref{th-w-iso} that 
$\mu_{(D,\mathbf{p})}(I)\equiv\mu_{(D',\mathbf{p}')}(I)\pmod{\Delta_{(D,\mathbf{p})}(I)}$ and $\Delta_{(D,\mathbf{p})}(I)=\Delta_{(D',\mathbf{p}')}(I)$. 
This completes the proof. 
\end{proof}

\begin{remark}\label{rem-DK-Delta}
It is suggested in \cite{Kotorii,ABMW} that Dye and Kauffman in~\cite{DK} failed to define Milnor-type ``invariants''. 
We clarify why Dye and Kauffman's construction/definition is incorrect. 
In~\cite{DK}, Dye and Kauffman defined a residue class $\omu^{{\rm DK}}$ of Milnor numbers $\mu$ for virtual link diagrams with base point systems. 
Their construction follows Milnor's original work~\cite{M57} 
but a different indeterminacy $\Delta^{{\rm DK}}(j_{1}\ldots j_{r}i)$, 
which is defined as the greatest common divisor of all $\mu(k_{1}\ldots k_{s}i)$, 
where $k_{1}\ldots k_{s}$ is a proper ``subset''  
of $j_{1}\ldots j_{r}$, see \cite[page 945]{DK}. 
(Here, ``subset'' should rather be ``subsequence''.) 
We stress that $\Delta^{{\rm DK}}(j_{1}\ldots j_{r}i)$ is determined by 
Milnor numbers for sequences with the last index $i$. 
It is stated in~\cite[Section~4]{DK} that $\omu^{{\rm DK}}$ does not depend on the choice of base point system, and moreover that it is an invariant of virtual links. 
However, this is {\em wrong}. More precisely, $\omu^{{\rm DK}}$ is {\em not} well-defined even for classical link diagrams. 
In the following, we will show that $\omu^{{\rm DK}}$ does depend 
on both Reidemeister moves and 
the choice of base point system: 
Let $(D, \mathbf{p})$, $(D,\mathbf{p}')$ and $(D',\mathbf{p})$ be 
the $3$-component link diagrams as in Figure~\ref{rem-counter-ex}. 
(We remark that the definition of arcs of a diagram in~\cite{DK} 
coincides with the original one in \cite{M57}.) 
Note that $(D, \mathbf{p})$ and $(D,\mathbf{p}')$ have the same diagram and 
different base point systems, and that $(D, \mathbf{p})$ and $(D',\mathbf{p})$ 
are related by a single R1 move relative base point system. 
Let $l, l'$ and $l''$ be the 3rd longitudes of 
$(D, \mathbf{p})$, $(D,\mathbf{p}')$ and $(D',\mathbf{p})$, respectively. 
Then by the definition of $\eta_{q}$ in~\cite{M57,DK}, 
$\eta_3(l)=\alpha_{2}^{-1}\alpha_{1}^{-1}\alpha_{2}\alpha_{1}$, 
$\eta_3(l')=\eta_3(l'')=1$, and hence 
$E(\eta_{3}(l))=
1+X_2X_1-X_1X_2+{({\rm terms~of~degree~}\geq 3)}$ and   
$E(\eta_3(l'))=E(\eta_3(l''))=1$. 
Since $\Delta^{{\rm DK}}_{(D,\mathbf{p})}(123)=\gcd{\left(
\mu_{(D,\mathbf{p})}(13), \mu_{(D,\mathbf{p})}(23)
\right)}=0$, we have 
$\omu^{{\rm DK}}_{(D,\mathbf{p})}(123)=-1$, while 
$\omu^{{\rm DK}}_{(D,\mathbf{p}')}(123)=
\omu^{{\rm DK}}_{(D',\mathbf{p})}(123)=0$. 
\end{remark}

\begin{figure}[htb]
  \begin{center}
    \vspace{1em}
    \begin{overpic}[width=11cm]{counter-ex.pdf}
      \put(45,-15){$(D,\mathbf{p})$}
      \put(-12,24){$p_{1}$}
      \put(86,10){$p_{2}$}
      \put(122,24){$p_{3}$}
      \put(4,53){$a_{11}$}
      \put(18,24){$a_{21}$}
      \put(52,55){$a_{22}$}
      \put(102,-4){$a_{31}$}
      \put(54,8){$a_{32}$}
      \put(239,-15){$(D,\mathbf{p}')$}
      \put(184,24){$p'_{1}$}
      \put(251,60){$p'_{2}$}
      \put(317,24){$p'_{3}$}
      \put(199,53){$a_{11}$}
      \put(280,38){$a_{21}$}
      \put(280,11){$a_{22}$}
      \put(298,-4){$a_{31}$}
      \put(250,8){$a_{32}$}
    \end{overpic}
  \end{center}
  \vspace{3em}

  \begin{center}
    \vspace{1em}
    \begin{overpic}[width=4.2cm]{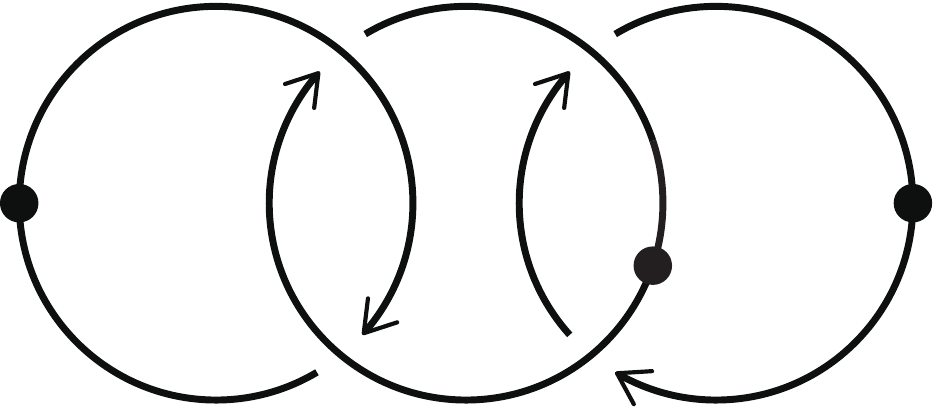}
      \put(46,-15){$(D',\mathbf{p})$}
      \put(-12,25){$p_{1}$}
      \put(88,10){$p_{2}$}
      \put(123,25){$p_{3}$}
      \put(3,54){$a_{11}$}
      \put(18,25){$a_{21}$}
      \put(103,-4){$a_{31}$}
      \put(55,8){$a_{32}$}
    \end{overpic}
  \end{center}
  \vspace{1em}
  \caption{} 
  \label{rem-counter-ex}
\end{figure}

\begin{remark}\label{rem-Th3} 
In Remark~\ref{rem-DK-Delta}, for the original definition of arcs in \cite{M57}, we see that 
$\mu_{(D,\mathbf{p})}(123)\neq\mu_{(D',\mathbf{p})}(123)$, 
while $(D, \mathbf{p})$ and $(D',\mathbf{p})$ 
are related by a single R1 move relative base point system. 
This implies that Theorem~\ref{th-w-bar} does not hold for the original definition  
of arcs.
\end{remark}

\section{Welded string links}
In the previous sections, we have studied Milnor invariants of welded {\em links}. 
Now we address the case of welded {\em string links}. 

Fix $n$ distinct points $0<x_{1}<\cdots<x_{n}<1$ in the unit interval $[0,1]$. 
Let $[0,1]_{1},\ldots,[0,1]_{n}$ be $n$ copies of $[0,1]$. 
An {\em $n$-component virtual string link diagram} is the image of an immersion 
\[
\bigsqcup_{i=1}^{n}[0,1]_{i}\longrightarrow [0,1]\times [0,1]
\]
such that the image of each $[0,1]_{i}$ runs from $(x_{i},0)$ to $(x_{i},1)$, and the singularities are only classical and virtual crossings. 
The $n$-component virtual string link diagram $\{x_{1},\ldots,x_{n}\}\times[0,1]$ in $[0,1]\times[0,1]$ is called the {\em trivial $n$-component string link diagram}. 
An {\em $n$-component welded string link} is an equivalence class of $n$-component virtual string link diagrams under welded isotopy. 

Let $\pi:[0,1]\times [0,1]\rightarrow [0,1]$ be the projection onto the first coordinate. 
Given an $n$-component virtual string link diagram $S$, 
an $n$-component virtual link diagram with a base point system is uniquely obtained by identifying points on the boundary of $[0,1]\times [0,1]$ with their images under the projection $\pi$. 
We denote it by $(D_{S},\mathbf{p}_{S})$, where $\mathbf{p}_{S}=(\pi(x_{1},0),\ldots,\pi(x_{n},0))=(\pi(x_{1},1),\ldots,\pi(x_{n},1))$. 
We see that if two virtual string link diagrams $S$ and $S'$ are welded isotopic, then $(D_{S},\mathbf{p}_{S})$ and $(D_{S'},\mathbf{p}_{S'})$ are $\overline{\mbox{w}}$-isotopic. 

For a sequence $I$ of indices in $\{1,\ldots,n\}$, the {\em Milnor number $\mu_{S}(I)$} of~$S$ is defined to be $\mu_{(D_{S},\mathbf{p}_{S})}(I)$. 
Theorem~\ref{th-w-bar} implies the following directly. 

\begin{corollary}\label{cor-Milnor-stringlink} 
Let $S$ and $S'$ be virtual diagrams of a welded string link. 
Then $\mu_{S}(I)=\mu_{S'}(I)$ for any sequence $I$. 
\end{corollary}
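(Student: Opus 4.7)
The plan is to reduce the statement directly to Theorem~\ref{th-w-bar}. By the definition given just above the corollary, we have $\mu_{S}(I)=\mu_{(D_{S},\mathbf{p}_{S})}(I)$ and $\mu_{S'}(I)=\mu_{(D_{S'},\mathbf{p}_{S'})}(I)$, so it suffices to establish the equality $\mu_{(D_{S},\mathbf{p}_{S})}(I)=\mu_{(D_{S'},\mathbf{p}_{S'})}(I)$ whenever $S$ and $S'$ represent the same welded string link.

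The main point is to justify the assertion stated immediately before the corollary: a welded isotopy from $S$ to $S'$ induces a $\overline{\mbox{w}}$-isotopy from $(D_{S},\mathbf{p}_{S})$ to $(D_{S'},\mathbf{p}_{S'})$. First I would recall that a welded isotopy between virtual string link diagrams, by definition, is a finite sequence of welded Reidemeister moves performed in the interior of $[0,1]\times[0,1]$ that keeps the $2n$ endpoints $(x_{i},0)$ and $(x_{i},1)$ fixed. Because the identification induced by the projection $\pi$ places each base point $p_{i}=\pi(x_{i},0)=\pi(x_{i},1)$ exactly at (the image of) one of these fixed boundary points, every move in the sequence translates to a welded Reidemeister move on $D_{S}$ that leaves $\mathbf{p}_{S}$ undisturbed; in particular a virtual crossing migrating near a boundary point corresponds to an instance of the move in Figure~\ref{w-bar}, which is already allowed in a $\overline{\mbox{w}}$-isotopy. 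Hence the composite translates to a $\overline{\mbox{w}}$-isotopy between $(D_{S},\mathbf{p}_{S})$ and $(D_{S'},\mathbf{p}_{S'})$.

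Once that correspondence is in place, Theorem~\ref{th-w-bar} applied to $(D_{S},\mathbf{p}_{S})$ and $(D_{S'},\mathbf{p}_{S'})$ immediately yields $\mu_{(D_{S},\mathbf{p}_{S})}(I)=\mu_{(D_{S'},\mathbf{p}_{S'})}(I)$ for every sequence $I$, which is the desired conclusion.

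The argument is essentially definitional, and I do not anticipate a genuine obstacle. The only verification worth making explicit is that each $p_{i}$ lies on an arc of $(D_{S},\mathbf{p}_{S})$ disjoint from the crossings of $D_{S}$, so that the base point system is legitimate in the sense of Section~\ref{sec-prelim}; this holds because the points $(x_{i},0)$ and $(x_{i},1)$ are regular (non-crossing) points of $S$ by the very definition of a virtual string link diagram.
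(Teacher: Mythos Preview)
Your proposal is correct and follows exactly the route the paper takes: the paper simply states that a welded isotopy between $S$ and $S'$ induces a $\overline{\mbox{w}}$-isotopy between $(D_S,\mathbf{p}_S)$ and $(D_{S'},\mathbf{p}_{S'})$, and then invokes Theorem~\ref{th-w-bar}. Your write-up just makes the translation of moves and the legitimacy of the base point system explicit, which the paper leaves to the reader.
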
 

Combining Theorems~\ref{th-w-bar} and \ref{th-sv}, the following result is obtained immediately. 

\begin{corollary}[{\cite[Lemma 9.1]{MY}}]
\label{cor-sv}
If two virtual string link diagrams $S$ and $S'$ are related by a finite sequence of self-crossing virtualizations and 
welded isotopies, then $\mu_{S}(I)=\mu_{S'}(I)$ 
for any non-repeated sequence $I$.
\end{corollary}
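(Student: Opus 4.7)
The plan is to reduce Corollary~\ref{cor-sv} to the already-established invariance statements, Theorems~\ref{th-w-bar} and~\ref{th-sv}, by passing from welded string links to virtual link diagrams with canonical base point systems. The construction $S \mapsto (D_S,\mathbf{p}_S)$ is the bridge: the base points are intrinsically determined by the endpoints of the strands, so they move along with the diagram in any prescribed way, and no base-change moves are ever required.

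First I would verify that a welded isotopy between two virtual string link diagrams $S$ and $S'$ induces a $\overline{\mbox{w}}$-isotopy between $(D_S,\mathbf{p}_S)$ and $(D_{S'},\mathbf{p}_{S'})$. This is essentially pointed out in the paragraph preceding Corollary~\ref{cor-Milnor-stringlink}: each welded Reidemeister move on $S$ occurs in the interior of the square $[0,1]\times[0,1]$, away from the endpoints $(x_i,0)$ and $(x_i,1)$; after identification via $\pi$, this becomes a welded Reidemeister move on $D_S$ that fixes the base point system $\mathbf{p}_S$. Hence Theorem~\ref{th-w-bar} yields $\mu_{(D_S,\mathbf{p}_S)}(I)=\mu_{(D_{S'},\mathbf{p}_{S'})}(I)$ for every sequence $I$, which by definition is $\mu_S(I)=\mu_{S'}(I)$.

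Next I would show that a self-crossing virtualization on $S$ translates into a self-crossing virtualization on $(D_S,\mathbf{p}_S)$, again relative to $\mathbf{p}_S$, since the move is local and takes place away from the endpoints. Theorem~\ref{th-sv} then yields $\mu_{(D_S,\mathbf{p}_S)}(I)=\mu_{(D_{S'},\mathbf{p}_{S'})}(I)$ for every non-repeated sequence $I$, i.e.\ $\mu_S(I)=\mu_{S'}(I)$.

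Finally, for a general finite sequence relating $S$ to $S'$, I would chain these two equalities across the individual moves. The only subtle point—really the only thing to watch—is that the non-repeated restriction must be maintained throughout, but this is automatic because the conclusion $\mu=\mu$ in Theorem~\ref{th-w-bar} is unrestricted on the sequence, while Theorem~\ref{th-sv} already supplies the non-repeated case we need. No indeterminacy appears, since the base point system is canonical and unchanged; this is exactly why Milnor numbers are honest integer-valued invariants for welded string links, in contrast with the welded link situation handled by Theorem~\ref{th-w-iso}. I do not foresee a genuine obstacle; the proof is a one-line assembly once the translation between the string-link and based-diagram categories has been made explicit.
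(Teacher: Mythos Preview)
Your proposal is correct and follows essentially the same approach as the paper: the authors simply state that the corollary is obtained immediately by combining Theorems~\ref{th-w-bar} and~\ref{th-sv}, via the passage $S\mapsto(D_S,\mathbf{p}_S)$ already set up in the preceding paragraph. Your write-up merely makes explicit the details that the paper leaves implicit.
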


\begin{remark}\label{rem-sv}
The converse of Corollary~\ref{cor-sv} is also true. 
In fact, it is shown in~\cite{ABMW,MY} that 
Milnor numbers for non-repeated sequences classify virtual string link diagrams up to self-crossing virtualizations and welded isotopies. 
\end{remark}

We conclude this paper with a classification result of virtual link diagrams with base point systems up to an equivalence relation generated by self-crossing virtualizations and $\overline{\mbox{w}}$-isotopies. 

\begin{theorem}\label{sv-classification}
Let $(D,\mathbf{p})$ and $(D',\mathbf{p}')$ be virtual link diagrams with base point systems. 
Then the following are equivalent. 
\begin{enumerate}
\item $(D,\mathbf{p})$ and $(D',\mathbf{p}')$ are related by a finite sequence of self-crossing virtualizations and $\overline{\mbox{w}}$-isotopies. 
\item $\mu_{(D,\mathbf{p})}(I)=\mu_{(D',\mathbf{p}')}(I)$ for any non-repeated sequence $I$. 
\end{enumerate}
\end{theorem}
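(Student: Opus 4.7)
The implication $(1) \Rightarrow (2)$ is immediate from the invariance results already established. Indeed, Theorem~\ref{th-w-bar} shows that Milnor numbers for \emph{every} sequence $I$ are invariant under $\overline{\mbox{w}}$-isotopy, while Theorem~\ref{th-sv} shows that $\mu_{(D,\mathbf{p})}(I)$ is invariant under a self-crossing virtualization for every non-repeated sequence $I$. Concatenating these invariances along a sequence realizing~(1) gives~(2).

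For the converse $(2) \Rightarrow (1)$, the plan is to reduce to the analogous (known) classification result for welded string links. The key observation is that the correspondence $S \mapsto (D_S,\mathbf{p}_S)$ described just before Corollary~\ref{cor-Milnor-stringlink} induces a bijection between $n$-component virtual string link diagrams modulo welded isotopy and $n$-component virtual link diagrams with base point systems modulo $\overline{\mbox{w}}$-isotopy. Concretely, given $(D,\mathbf{p})$, one cuts each component at its base point to obtain a virtual string link diagram $S$, and conversely closing $S$ by identifying top and bottom endpoints recovers $(D_S,\mathbf{p}_S) = (D,\mathbf{p})$. Under this correspondence, the welded Reidemeister moves on string links (whose endpoints are fixed) correspond exactly to the welded Reidemeister moves relative base point system together with the base-point-passing-a-virtual-crossing move of Figure~\ref{w-bar}, which is precisely the set of moves generating $\overline{\mbox{w}}$-isotopy. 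Moreover, self-crossing virtualizations correspond to self-crossing virtualizations.

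With this bijection in hand, I would then argue as follows. Let $S$ and $S'$ be the virtual string link diagrams obtained by cutting $(D,\mathbf{p})$ and $(D',\mathbf{p}')$ respectively. By the definition $\mu_S(I) := \mu_{(D_S,\mathbf{p}_S)}(I)$ and the hypothesis~(2), we have $\mu_S(I) = \mu_{S'}(I)$ for every non-repeated sequence $I$. By the classification result quoted in Remark~\ref{rem-sv} (originally~\cite{ABMW,MY}), which asserts that non-repeated Milnor numbers classify virtual string link diagrams up to self-crossing virtualizations and welded isotopies, we conclude that $S$ and $S'$ are related by a finite sequence of such moves. Transporting this sequence through the bijection above yields a finite sequence of self-crossing virtualizations and $\overline{\mbox{w}}$-isotopies from $(D,\mathbf{p})$ to $(D',\mathbf{p}')$, proving~(1).

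The main point requiring care is the bijective correspondence between the two categories of diagrams, in particular verifying that $\overline{\mbox{w}}$-isotopy is precisely the right notion on the closed side to match welded isotopy on the string link side — here the exclusion of base-change moves from $\overline{\mbox{w}}$-isotopy is essential, since the endpoints of a string link are not allowed to slide across crossings either. Everything else is a direct appeal to previously established results.
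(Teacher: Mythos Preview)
Your proposal is correct and follows essentially the same route as the paper: reduce $(2)\Rightarrow(1)$ to the string link classification of Remark~\ref{rem-sv} via the closure correspondence $S\mapsto(D_S,\mathbf{p}_S)$. The only difference is presentational: the paper makes the passage from $(D,\mathbf{p})$ to a string link concrete by using V2 moves and the move of Figure~\ref{w-bar} to push all base points into a small disk where the diagram is trivial, whereas you assert the correspondence abstractly by ``cutting at the base points''; note also that you only need one direction of your claimed bijection (welded isotopy of string links $\Rightarrow$ $\overline{\mbox{w}}$-isotopy of closures), which is exactly what the paper states before Corollary~\ref{cor-Milnor-stringlink}.
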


\begin{proof}
$(1)\Rightarrow(2)$:~
This follows from Theorems~\ref{th-w-bar} and \ref{th-sv} directly. 

$(2)\Rightarrow(1)$:~
For a small disk $\delta$ which is disjoint from $(D,\mathbf{p})$ (or $(D',\mathbf{p}')$), by applying VR2 relative base point system and the local move in Figure~\ref{w-bar} repeatedly, 
we can deform $(D,\mathbf{p})$ (or $(D',\mathbf{p}')$) such that the intersection between the disk $\delta$ and the deformed diagram is the trivial string link diagram whose each component contains the base point. 
Hence, $D\setminus \delta$ and $D'\setminus \delta$ can be regarded as string link diagrams $S$ and $S'$, respectively. 
Since $(D_{S},\mathbf{p}_{S})$ and $(D_{S'},\mathbf{p}_{S'})$ are $\overline{\mbox{w}}$-isotopic to $(D,\mathbf{p})$ and $(D',\mathbf{p}')$, respectively, 
it follows from Theorem~\ref{th-w-bar} that 
\[
\mu_{S}(I)=\mu_{(D,\mathbf{p})}(I)\hspace{1em} \mbox{and}\hspace{1em}  \mu_{S'}(I)=\mu_{(D',\mathbf{p}')}(I)
\] 
for any non-repeated sequence $I$. 
Hence we have $\mu_{S}(I)=\mu_{S'}(I)$ by assumption. 
Then, by Remark~\ref{rem-sv}, $S$ and $S'$ are related by a finite sequence of self-crossing virtualizations and welded isotopies. 
This implies that $(D_{S},\mathbf{p}_{S})$ and $(D_{S'},\mathbf{p}_{S'})$ are related by a finite sequence of self-crossing virtualizations and $\overline{\mbox{w}}$-isotopies. 
\end{proof}

\begin{remark}
By Theorem~\ref{sv-classification}, the two virtual link diagrams with base point systems $(D, \mathbf{p})$ and $(D, \mathbf{p}')$ given in Example~\ref{ex-Milnor-number} are not related by a finite sequence of self-crossing virtualizations and $\overline{\mbox{w}}$-isotopies.
\end{remark}


\end{document}